\newcommand{\si}{\sigma}
\newcommand{\tht}{\theta}
\newcommand{\id}{\mathrm{id}}
\newcommand{\ot}{\otimes}
\newcommand{\trl}{\triangleleft}
\newcommand{\trr}{\triangleright}
\def\ppr{\rightharpoonup}
\def\ppl{\leftharpoonup}
\newcommand{\li}{{}_{1}}
\newcommand{\lii}{{}_{2}}
\newcommand{\lmo}{{}_{(0)}} %%%%%=\lmo=\loo=\lmoo \loi=\lmoi \loii=\lmoii
\newcommand{\loo}{{}_{(0)}}
\newcommand{\loi}{{}_{(-1)}}
\newcommand{\lmoo}{{}_{(0)}}
\newcommand{\lmi}{{}_{(1)}}
\newcommand{\lmoi}{{}_{(-1)}}
\newcommand{\mo}{{}_{(0)}}
\newcommand{\mi}{{}_{(1)}}
\newcommand{\moi}{{}_{(-1)}}
\newcommand{\boo}{{}_{[0]}}
\newcommand{\bi}{{}_{[1]}}
\newcommand{\boi}{{}_{[-1]}}
\newcommand{\poo}{{}_{[0]}}
\newcommand{\ppi}{{}_{<1>}}
\newcommand{\pii}{{}_{<2>}}
\newcommand{\qi}{{}_{\{1\}}}
\newcommand{\qii}{{}_{\{2\}}}
\def\rbiprod{{\cdot\kern-.33em\triangleright\!\!\!<}}
\def\lbiprod{{>\!\!\!\triangleleft\kern-.33em\cdot\, }}
\def\lrbiprod{{\ \cdot\kern-.60em\triangleright\kern-.33em\triangleleft\kern-.33em\cdot\, }}
\def\lprod{{>\!\!\!\triangleleft\kern-.33em\ \, }}
\newcommand{\lrcoprod}{{\,\blacktriangleright\!\!\blacktriangleleft\, }}
\newtheorem{theorem}{Theorem}[section]
\newtheorem{lemma}[theorem]{Lemma}
\theoremstyle{definition}
\newtheorem{definition}[theorem]{Definition}
\newtheorem{remark}[theorem]{Remark}
\title{Braided anti-flexible bialgebras}
\author{Tao Zhang, Hui-Jun Yao}
\date{}
\begin{document}
 \maketitle

 \setcounter{section}{0}

\begin{abstract}
We introduce the concept of braided anti-flexible bialgebra  and construct cocycle bicrossproduct  anti-flexible bialgebras.
As an application, we solve the extending problem for anti-flexible bialgebras by using some non-abelian cohomology theory.
\par\smallskip
{\bf 2020 MSC:} 17A20, 17D25, 16T10

\par\smallskip
{\bf Keywords:}
Braided  anti-flexible bialgebras, cocycle bicrossproducts, extending structures,  non-abelian cohomology.
\end{abstract}

%  The paper
%\tableofcontents

\section{Introduction}
As a special type of Lie-admissible algebras, anti-flexible algebras have been studied  by Anderson, Outcalt, Kosier and  Rodabaugh in \cite{AO,K,R}.
Very recently,  the concept of anti-flexible bialgebras is introduced by Dassoundo, Bai and Hounkonnou in \cite{DBH}.
The theory of Manin triples and anti-flexible Yang-Baxter equation for  anti-flexible algebras are developed in the same paper.
Pre-anti-flexible algebras and pre-anti-flexible bialgebras are studied by Dassoundo in \cite{D1,D2}.

On the other hand, the theory of extending structure for many types of algebras were well  developed by A. L. Agore and G. Militaru in \cite{AM1,AM2,AM3,AM4,AM5,AM6}.
Let $A$ be an algebra and $E$ a vector space containing $A$ as a subspace.
The extending problem is to describe and classify all algebra structures on $E$ such that $A$ is a subalgebra of $E$.
They show that associated to any extending structure of $A$ by a complement space $V$, there is an  unified product on the direct sum space  $E\cong A\oplus V$.
Recently, extending structures for Lie bialgebras, 3-Lie algebras,  infinitesimal bialgebras and Lie conformal superalgebras were studied  in \cite{Hong,Z2,Z3,Z4,ZCY}.

Since the extending structure for  anti-flexible algebras and the cohomology theory of anti-flexible bialgebras have not been developed in the literature.
The aim of this paper is to fill in these gaps. The motivation is from quantum group theory. In \cite{Ra85,Dr86,BD99,BD01}, the concept of braided Hopf algebras was provided and the construction of cross product bialgebras was studied in detail. See also \cite{Ma90a,Ma95,Mas00,Zh99}.
In \cite{So96,Ma00,Z1}, the notion of  braided Lie bialgebras  was introduced and the construction of cocycle bicrossproducts Lie bialgebras was developed.
It is a natural question whether there are similar constructions in theory of anti-flexible bialgebras.

In this paper, we provided the concept of braided anti-flexible bialgebras.
It is showed that this new concept will play a key role in considering extending problem for anti-flexible bialgebras.
Secondly, the theory of unified product for anti-flexible bialgebras is also developed and the construction of cocycle bicrossproduct anti-flexible bialgebras is given.
Finally, we solve the extending problem for anti-flexible bialgebras by using some non-abelian cohomology theory.

This paper is organized as follows. In Section 2, we recalled some definitions and fixed some notations about anti-flexible algebras. In Section 3, we introduced the concept of braided anti-flexible bialgebras and proved the bosonisation theorem associating braided anti-flexible bialgebras to ordinary anti-flexible bialgebras.
In Section 4, we defined the notion of matched pairs of  braided anti-flexible bialgebras.
Besides, we constructed cocycle bicrossproduct anti-flexible bialgebras through two generalized braided anti-flexible bialgebras.
In Section 5, we studied the extending problems for anti-flexible bialgebras and proved that they can be classified by some non-abelian cohomology theory.

Throughout the following of this paper, all vector spaces will be over a fixed field of character zero.
An  algebra or  a   coalgebra   is  denoted by $(A, \cdot)$ or $(A, \Delta)$.
The identity map of a vector space $V$ is denoted by $\id_V: V\to V$ or simply $\id: V\to V$.
The flip map $\tau: V\ot V\to V\ot V$ is defined by $\tau(u\ot v)=v\ot u$ for all $u, v\in V$.

\section{Preliminaries}

\begin{definition}Let $A$ be a vector space equipped with a multiplication $\cdot: A\otimes A\rightarrow A$.  Then $A$ is called an anti-flexible algebra if the following anti-flexible identity is satisfied:
\begin{equation}
(a,~ b,~ c)=(c,~ b,~ a),
 \end{equation}
  or  equivalently,
\begin{equation}
 (a\cdot b)\cdot c-a\cdot (b\cdot c)=(c\cdot b)\cdot a-c\cdot (b\cdot a),
 \end{equation}
 where $a, b, c \in A$ and  the associator is denoted by $(a,~b,~c)=(a\cdot b)\cdot c-a\cdot (b\cdot c).$
In the following, we always omit $ ``\cdot" $ and write the multiplication by $ab$ for simplicity.
\end{definition}
%An anti-flexible algebra is also called a center-symmetric algebra or a  G4-associative algebra.

\begin{definition}An  anti-flexible coalgebra $A$ is a vector space equipped with a comultiplication $\Delta: A\rightarrow A\otimes A$ such that the following anti-flexible condition is satisfied,
\begin{equation}
(\Delta \otimes \id)\Delta(a)-( \id\otimes\Delta)\Delta (a)=\tau_{13}\left( (\Delta \otimes \id)\Delta(a)-( \id\otimes\Delta)\Delta (a)\right),
 \end{equation}
where $\tau_{13}(a\ot b\ot c)=c\ot b\ot a$.
We denote an anti-flexible coalgebra by $(A, \Delta)$.
\end{definition}

\begin{definition} \label{dfnlb} (\cite{DBH}) An  anti-flexible bialgebra $A$ is a vector space equipped simultaneously with an anti-flexible algebra structure $(A, \cdot)$  and an anti-flexible  coalgebra structure $(A, \Delta)$ such that the following
compatibility conditions are satisfied,
\begin{equation}\label{eq:LB0}
 \Delta(ab)+\tau\Delta(ba)=\sum  a\li b\ot a\lii +b a\lii\ot a\li +b\li \ot ab\lii+b\lii\ot b\li a ,
 \end{equation}
 \begin{equation}\label{eq:LB1}
  (\id-\tau )\Big( a\li\ot a\lii b+ab\li\ot b\lii-ba\li\ot a\lii-b\li\ot b\lii a \Big)=0,
 \end{equation}
where we  use the sigma notation $\Delta(a):=\sum a\li\ot a\lii$.
We denote an  anti-flexible bialgebra by $(A, \cdot, \Delta)$.
\end{definition}

\begin{remark} The anti-flexible  bialgebras in the above Definition \ref{dfnlb} is same as in  \cite[Definition 3.4]{DBH}, but we use different notations as \cite{DBH}, in which  \eqref{eq:LB0} and \eqref{eq:LB1} were written as
\begin{align}\label{eq:LB00}
\Delta(a b)+\tau\Delta(ba)=&(\tau(\id \otimes L(b))+ R(b)\otimes \id)\Delta(a)+(\tau(R(a)\otimes \id) + \id\otimes L(a))\Delta(b),\\
 \nonumber(\tau(\id\otimes R(b))&-\id\otimes R(b)- \tau(L(b)\otimes \id)+L(b)\otimes \id)\Delta(a)=\\
 =&(\tau(\id\otimes R(a))-\id\otimes R(a)- \tau(L(a)\otimes \id)+L(a)\otimes \id)\Delta(b),\label{eq:LB10}
\end{align}
where $L(a)$ and $R(a)$ denote the left and right multiplication operators respectively.
It is easy to see that \eqref{eq:LB00} and  \eqref{eq:LB10} are equivalent to  \eqref{eq:LB0} and  \eqref{eq:LB1} using the sigma notation.
For simplicity, we also would like to denote
\begin{align*}
&\Delta (a)\cdot b:= \sum a\li\ot a\lii b=(\id\otimes R(b))\Delta (a),\\
&a\cdot \Delta(b):=\sum  a b\li\ot b\lii=(L(a)\otimes \id)\Delta(b),\\
&\Delta(a)\bullet b:=\sum a\li b\ot a\lii=(R(b)\otimes \id)\Delta (a),\\
& a\bullet \Delta(b):=\sum  b\li\ot a b\lii=(\id\otimes L(a))\Delta(b).
\end{align*}
 Thus we also write the compatibility conditions \eqref{eq:LB00} and  \eqref{eq:LB10} as
\begin{equation}\label{eq:LB2}
 \Delta(ab)+\tau\Delta(ba)=\sum \Delta(a)\bullet b +b\cdot \tau\Delta(a)+a\bullet \Delta(b)+\tau\Delta(b)\cdot a,
 \end{equation}
\begin{equation}\label{eq:LB3}
  (\id-\tau )\Big( \Delta (a)\cdot b+a\cdot \Delta(b) -b\cdot \Delta(a)-\Delta (b)\cdot a\Big)=0.
 \end{equation}
 \end{remark}

\begin{definition}
Let ${A}$ be an anti-flexible algebra and $V$ be a vector space. Then $V$ is called an ${A}$-bimodule if there is a pair of linear maps $ \trr: {A}\otimes V \to V, ({a}, v) \to {a} \trr v$ and $\trl: V\otimes {A} \to V, (v, {a}) \to v \trl {a}$  such that the following conditions hold:
\begin{eqnarray}
&&  (ab) \trr v- a\trr ({b}\trr v)=(v \trl {b}) \trl a-v \trl ({b}a), \\
&&  (a\trr v)\trl {b}-a \trr (v\trl {b}) =({b}\trr v)\trl a-{b} \trr (v\trl a),
\end{eqnarray}
for all $a, b\in {A}$ and $v\in V.$

\end{definition}
The category of  bimodules over $A$ is denoted  by ${}_{A}\mathcal{M}{}_{A}$.

\begin{definition}
Let ${A}$ be an anti-flexible coalgebra,  $V$ a vector space.  Then $V$ is called an ${A}$-bicomodule if there is a pair of linear maps $\phi: V\to {A}\otimes V$ and $\psi: V\to V\otimes {A}$  such that the following conditions hold:
\begin{eqnarray}
 &&\left(\Delta_{A} \otimes \id _{V}\right)\phi(v)-\left(\id _{A} \otimes \phi\right) \phi(v)=\tau_{13}\Big(\left( \psi\otimes \id _{A} \right) \psi(v)-\left(\id_{V}\ot\Delta_{A} \right)\psi(v)\Big),\\
 &&(\phi\ot \id_{A})\psi(v)-(\id_{A} \ot \psi)\phi(v)=\tau_{13}\Big((\phi\ot \id_{A})\psi(v)-(\id_{A} \ot \psi)\phi(v)\Big).
\end{eqnarray}
If we denote by  $\phi(v)=v\moi\ot v\mo$ and $\psi(v)=v\mo\ot v\mi$, then the above equations can be written as
\begin{eqnarray}
  &&\Delta_{A}\left(v_{(-1)}\right) \otimes v_{(0)}-v_{(-1)} \otimes \phi\left(v_{(0)}\right)=\tau_{13}\Big(\psi\left(v_{(0)}\right) \otimes v_{(1)}-v_{(0)} \otimes \Delta_{A}\left(v_{(1)}\right)\Big),\\
  &&\phi(v_{(0)})\ot v_{(1)}-v_{(-1)}\ot \psi(v_{(0)})=\tau_{13}\Big(\phi(v_{(0)})\ot v_{(1)}-v_{(-1)}\ot \psi(v_{(0)})\Big ).
\end{eqnarray}
\end{definition}
The category of  bicomodules over $A$ is denoted by ${}^{A}\mathcal{M}{}^{A}$.

\begin{definition}
Let ${H}$ and  ${A}$ be anti-flexible algebras. An action of ${H}$ on ${A}$ is a pair of linear maps $\trr:  {H}\otimes {A} \to {A}, (x, a) \to x \trr a$ and $\trl: {A}\otimes {H} \to {A}, (a, x) \to a \trl x$  such that $A$ is an $H$-bimodule and  the following conditions hold:
\begin{eqnarray}
  &&(x\trr a)b -x\trr (ab)=(ba)\trl x-b(a \trl x),\\
  &&a(x\trr b)-(a\trl x)b=b(x\trr a )-(b\trl x)a,
\end{eqnarray}
for all $x\in {H}$ and $a, b\in {A}$.  In this case, we call $(A, \, \trr, \, \trl)$ to be an $H$-bimodule algebra.
\end{definition}

\begin{definition}
Let ${H}$ and  ${A}$ be anti-flexible coalgebras. An coaction of ${H}$ on ${A}$ is a pair of linear maps $\phi: {A}\to {H}\otimes {A}$ and $\psi: {A}\to {A}\otimes {H}$ such that $A$ is an $H$-bicomodule and  the following conditions hold:
\begin{eqnarray}
  &&(\phi\otimes\id_A) \Delta_{A}(a)-(\id_H \otimes \Delta_{A})\phi(a)=\tau_{13}\Big((\Delta_{A}\otimes \id_H)\psi(a)-(\id_A\otimes \psi) \Delta_{A}(a)\Big),\\
  &&(\psi\otimes\id_A)\Delta_{A}(a)-(\id_A\otimes \phi) \Delta_{A}(a)
  =\tau_{13}\Big((\psi\otimes\id_A)\Delta_{A}(a)-(\id_A\otimes \phi) \Delta_{A}(a)\Big).
\end{eqnarray}
If we denote by  $\phi(a)=a\moi\ot a\mo$ and $\psi(a)=a\mo\ot a\mi$, then the above equations can be written as
\begin{eqnarray}
    &&\phi\left(a_{1}\right) \otimes a_{2}-a_{(-1)} \otimes \Delta_{A}\left(a_{(0)}\right)=\tau_{13}\Big( \Delta_{A}\left(a_{(0)}\right) \otimes a_{(1)}-a_{1} \otimes \psi\left(a_{2}\right)\Big), \\
    &&\psi(a_{1})\ot a_{2}-a_{1}\ot \phi(a_{2})=\tau_{13}(\psi(a_{1})\ot a_{2}-a_{1}\ot \phi(a_{2})).
\end{eqnarray}
for all $a\in {A}$. In this case, we call $(A, \, \phi, \, \psi)$ to be an $H$-bicomodule  coalgebra.
\end{definition}

%\subsection{Extending problem for   bialgebras}
\begin{definition}
Let $({A},\cdot)$ be a given   anti-flexible  algebra (anti-flexible coalgebra,  anti-flexible   bialgebra), $E$ a vector space.
An extending system of ${A}$ through $V$ is an anti-flexible  algebra  (anti-flexible coalgebra,   anti-flexible  bialgebra) on $E$
such that $V$ is a complement subspace of ${A}$ in $E$, the canonical injection map $i: A\to E, a\mapsto (a, 0)$  or the canonical projection map $p: E\to A, (a, x)\mapsto a$ is an anti-flexible  algebra (anti-flexible coalgebra,    anti-flexible bialgebra) homomorphism.
%such that ${A}$ is a subspace of $E$ and $V$ a complement of ${A}$ in $E$,
%which fit into the following exact sequence as vector spaces
%\begin{equation}
%\CD
%  0 @>>>  {A} @>i>> E @>\pi>>  {V}  @>>> 0 \\
%\endCD
%\end{equation}
The extending problem is to describe and classify up to an isomorphism  the set of all  anti-flexible   algebra  (anti-flexible coalgebra,   anti-flexible  bialgebra) structures that can be defined on $E$.
%such that $({A},\cdot)$ is an algebra or the canonical projection map $p: E\to A$ is an algebra homomorphism.
\end{definition}

We remark that our definition of extending system of ${A}$ through $V$ contains not only extending structure in \cite{AM1,AM2,AM3}
but also the global extension structure in \cite{AM5}.
%The reason is that when we consider extending problem for Lie bialgebras, both of them are necessarily used, this will be clear in the context of next two sections.
In fact, the canonical injection map $i: A\to E$ is an anti-flexible  (co)algebra homomorphism if and only if $A$ is an   anti-flexible  sub(co)algebra of $E$.

\begin{definition}
Let ${A} $ be an   anti-flexible   algebra (anti-flexible coalgebra, anti-flexible    bialgebra), $E$  be an anti-flexible  algebra  (anti-flexible coalgebra,   anti-flexible bialgebra) such that
${A} $ is a subspace of $E$ and $V$ a complement of
${A} $ in $E$.  For a linear map $\varphi: E \to E$ we
consider the diagram:
\begin{equation}\label{eq:ext1}
\xymatrix{
   0  \ar[r]^{} &A \ar[d]_{\id_A} \ar[r]^{i} & E \ar[d]_{\varphi} \ar[r]^{\pi} &V \ar[d]_{\id_V} \ar[r]^{} & 0 \\
   0 \ar[r]^{} & A \ar[r]^{i'} & {E} \ar[r]^{\pi'} & V \ar[r]^{} & 0
   }
\end{equation}
where $\pi, \pi': E\to V$ are the projection maps and $i, i': A\to E$ are the inclusion maps.
We say that $\varphi: E \to E$ \emph{stabilizes} ${A}$ if the left square of the diagram \eqref{eq:ext1} is  commutative.

Let $(E, \cdot)$ and $(E, \cdot')$ be two   anti-flexible  algebra (anti-flexible coalgebra,   anti-flexible  bialgebra) structures on $E$.  $(E, \cdot)$ and $(E, \cdot')$ are called \emph{equivalent}, and we denote this by $(E, \cdot) \equiv (E, \cdot')$, if there exists an anti-flexible  algebra (anti-flexible coalgebra,  anti-flexible   bialgebra) isomorphism $\varphi: (E, \cdot)
\to (E, \cdot')$ which stabilizes ${A}$.  Denote by $Extd(E,{A} )$ ($CExtd(E,{A} )$, $BExtd(E,{A} )$) the set of equivalent classes of   anti-flexible   algebra (anti-flexible coalgebra,  anti-flexible   bialgebra) structures on $E$.
\end{definition}

\section{Braided anti-flexible bialgebras}
In this section, we introduce  the concepts of   anti-flexible Hopf bimodule and braided  anti-flexible bialgebra which are key concepts in the following sections.

\subsection{Anti-flexible Hopf bimodules and braided   anti-flexible bialgebras}
\begin{definition}
 Let $H$ be an   anti-flexible   bialgebra. An anti-flexible Hopf bimodule over $H$ is a space $V$ endowed with maps
\begin{align*}
&\trr: H\otimes V \to V,\quad \trl: V\otimes H \to V,\\
&\phi:V \to H \otimes V,\quad  \psi: V \to V\otimes H,
\end{align*}
such that $V$ is simultaneously an $H$-bimodule, an  $H$-bicomodule and
 the following compatibility conditions hold:
 \begin{enumerate}
\item[(HM1)]$\phi(x \trr v)+\tau\psi(v \trl x)=v_{(-1)}\ot (x\trr v_{(0)})+v_{(1)}\ot (v_{(0)}\trl x)$,
\item[(HM2)]$\psi(x \trr v)+\tau\phi(v\trl x)=\left(x\li \trr  v\right) \otimes x\lii+v_{(0)}\otimes xv_{(1)}+v_{(0)}\otimes v_{(-1)}x+(v\trl x_{2})\ot x_{1}$,

\item[(HM3)] $ (x\trr v\loo)\ot v_{(1)}-(v\trl x\li)\ot x\lii-v\loo\ot v_{(1)} x  $\\
$=\tau(xv\loi\ot v\loo+x\li\ot(x\lii\trr v)-v\loi\ot (v\loo\trl x))$.
\end{enumerate}
\end{definition}
We denote  the  category of  anti-flexible Hopf bimodules over $H$ by ${}^{H}_{H}\mathcal{M}{}^{H}_{H}$.

\begin{definition} Let $H$  be an  anti-flexible bialgebra.
Let $A$ be an anti-flexible algebra and an anti-flexible coalgebra in ${}^{H}_{H}\mathcal{M}{}^{H}_{H}$, we call $A$ a \emph{braided anti-flexible bialgebra},
if the following braided compatibility conditions are satisfied:
\begin{enumerate}
\item[(BB1)]
$\Delta_{A}(ab)+\tau\Delta_{A}(ba)$\\
$=a\li b\ot a\lii +b a\lii\ot a\li +b\li \ot ab\lii+b\lii\ot b\li a$\\
$+(a\loi\trr b)\ot a\loo+(b\trl a_{(1)})\ot a\loo+b\loo\ot (a\trl b_{(1)})+b\loo\ot (b\loi \trr a)$,
\item[(BB2)]
$(\id-\tau)\Big(a\li\ot a\lii b-ba\li\ot a\lii-b\li\ot b\lii a+ab\li\ot b\lii  \Big)$\\
 $+(\id-\tau)\Big(a\loo\ot(a_{(1)}\trr b)-(b\trl a\loi)\ot a\loo-b\loo\ot(b_{(1)}\trr a)+(a\trl b\loi)\ot b\loo \Big)=0$.
\end{enumerate}
\end{definition}

Here  $A$ is an anti-flexible algebra and an anti-flexible  coalgebra in ${}^{H}_{H}\mathcal{M}{}^{H}_{H}$ means that $A$ is simultaneously an $H$-bimodule anti-flexible  algebra (anti-flexible coalgebra) and $H$-bicomodule  anti-flexible algebra (anti-flexible coalgebra).

%Note that the difference between braided anti-flexible bialgebra and  anti-flexible bialgebra  is that there some braided compatibility conditions.

Now we construct anti-flexible bialgebras from braided anti-flexible bialgebras.
Let $H$  be an   anti-flexible bialgebra, $A$ be an anti-flexible  algebra and an  anti-flexible coalgebra in ${}^{H}_{H}\mathcal{M}{}^{H}_{H}$.
We define multiplication and comultiplication on the direct sum vector space $E:=A \oplus H$ by
$$
\begin{aligned}
&(a, x)(b, y):=(a b+x\trr b+a \trl y, x y), \\
&\Delta_{E}(a, x):=\Delta_{A}(a)+\phi(a)+\psi(a)+\Delta_{H}(x).
\end{aligned}
$$
This is called biproduct of ${A}$ and ${H}$ which will be  denoted   by $A\lbiprod H$.

\begin{theorem} Let  $H$ be an   anti-flexible bialgebra.
Then the biproduct $A\lbiprod H$ forms an   anti-flexible bialgebra if and only if  $A$ is an braided    anti-flexible bialgebra in ${}^{H}_{H}\mathcal{M}{}^{H}_{H}$.
\end{theorem}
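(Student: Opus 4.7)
The plan is to verify each of the four defining axioms of an anti-flexible bialgebra directly for $E = A\oplus H$ with the given biproduct structure, and to observe that, relative to the direct sum decomposition $E^{\otimes n} = \bigoplus_k A^{\otimes k}\otimes_{\sigma} H^{\otimes (n-k)}$ (over all orderings), each axiom splits into a list of identities that are in bijection with the hypotheses: the anti-flexible bialgebra axioms for $H$, the $H$-bimodule and $H$-bicomodule (co)algebra axioms for $A$, the anti-flexible Hopf bimodule conditions (HM1)--(HM3), and the braided conditions (BB1)--(BB2). Since both $\Delta_E$ and the product on $E$ preserve these subspaces in a controlled way, the forward and converse implications reduce to matching the components one by one.

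First I would handle the multiplicative axioms. Expanding
\[((a,x)(b,y))(c,z) - (a,x)((b,y)(c,z)) \;=\; ((c,z)(b,y))(a,x) - (c,z)((b,y)(a,x))\]
via $(a,x)(b,y) = (ab + x\trr b + a\trl y,\ xy)$, the $H$-component gives anti-flexibility of $H$; the $A$-component cleanly splits into the purely cubic $A$-term (anti-flexibility of $A$), terms linear in one element of $H$ and quadratic in $A$ (which recover the two $H$-bimodule algebra axioms for $(A,\trr,\trl)$), and the purely $A$-part of each mixed product which is absorbed again into anti-flexibility of $A$. Dually, the co-anti-flexible identity for $\Delta_E = \Delta_A + \phi + \psi + \Delta_H$ decomposes across the three-fold target subspaces: the $H^{\otimes 3}$ and $A^{\otimes 3}$ pieces yield co-anti-flexibility of $H$ and of $A$, while the six mixed pieces yield exactly the $H$-bicomodule coalgebra axioms on $A$ (these are the defining relations for $(A,\phi,\psi)$ as a bicomodule, together with the compatibility encoded in the fact that $\Delta_A$ is a morphism in ${}^H{\cal M}^H$).

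The heart of the proof is the two bialgebra compatibility conditions. For \eqref{eq:LB2}, I would expand $\Delta_E((a,x)(b,y)) + \tau\Delta_E((b,y)(a,x))$ on the left and the four terms $(b,y)\cdot\tau\Delta_E(a,x) + \tau\Delta_E(b,y)\cdot(a,x) + \Delta_E(a,x)\bullet(b,y) + (a,x)\bullet\Delta_E(b,y)$ on the right, then collect terms according to the four subspaces $A\otimes A$, $H\otimes H$, $A\otimes H$, $H\otimes A$. The $H\otimes H$-part recovers \eqref{eq:LB2} for $H$, the $A\otimes A$-part matches (BB1) term-for-term (the first four $A$-summands coming from $\Delta_A(ab)+\tau\Delta_A(ba)$, the next four from the action contributions $a\loi\trr b$, $b\trl a_{(1)}$, etc.), while the $A\otimes H$- and $H\otimes A$-components collapse to (HM1) and (HM2) respectively. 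The identity \eqref{eq:LB3} on $E$ is treated identically and produces: \eqref{eq:LB3} for $H$ on the $H\otimes H$-component, condition (BB2) on the $A\otimes A$-component, and condition (HM3) on the mixed components (the minus sign structure built into $\id - \tau$ in (BB2) and (HM3) is exactly what is inherited from $\id - \tau$ in \eqref{eq:LB3} on $E$).

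The main obstacle is simply the combinatorial bookkeeping: each side of each compatibility produces on the order of a dozen terms, and one must track carefully which factor lies in $A$ versus $H$, the order of tensor factors, and the placement of $\tau$ so that the cross-terms in $A\otimes H$ and $H\otimes A$ assemble into (HM1)--(HM3) with no residual. Once the collection by subspace is done, the forward implication follows from the assumed (HM$i$) and (BB$i$), and the converse follows because $E\otimes E = A\otimes A \oplus A\otimes H \oplus H\otimes A \oplus H\otimes H$ is a direct sum, so each component of the identities on $E$ must vanish independently, forcing the corresponding axiom on $A$, $H$, or the Hopf bimodule structure.
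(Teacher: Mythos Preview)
Your approach is the same as the paper's---direct expansion of each axiom on $E$ and decomposition by tensor components---and the overall logic is sound. However, your enumeration of which conditions arise from the two bialgebra compatibilities is incomplete. Decomposing only by the \emph{target} subspace $A\otimes A$, $A\otimes H$, $H\otimes A$, $H\otimes H$ is not fine enough: you must also distinguish by which of the inputs $a,b\in A$ and $x,y\in H$ are present. For instance, the $A\otimes A$-component of \eqref{eq:LB2} on $E$ yields (BB1) when evaluated on $(a,0),(b,0)$, but on $(0,x),(b,0)$ it yields the separate identity $\Delta_A(x\trr b)+\tau\Delta_A(b\trl x)=b_2\otimes(b_1\trl x)+b_1\otimes(x\trr b_2)$, which is part of ``$A$ is an $H$-bimodule coalgebra''. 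Likewise the $H\otimes A$-component yields both (HM1) (from $x,b$) and the identity $\phi(ab)+\tau\psi(ba)=b_{(-1)}\otimes ab_{(0)}+b_{(1)}\otimes b_{(0)}a$ (from $a,b$), which is part of ``$A$ is an $H$-bicomodule algebra''. The same happens for \eqref{eq:LB3}: beyond (BB2) and (HM3) you obtain two further mixed conditions. In the paper these extra identities are labeled (4), (5), (7), (8) and are exactly what is needed to say that $A$ is an anti-flexible algebra and coalgebra \emph{in} ${}^H_H\mathcal{M}^H_H$, not merely an $H$-bimodule algebra and $H$-bicomodule coalgebra.

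This does not break your argument---once you refine the decomposition by both source and target the matching goes through---but as written your list of conditions from (LB2) and (LB3) omits four of the nine identities the paper extracts, so the ``if and only if'' would not close without them.
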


\begin{proof}
It is easy to prove the multiplication and the comultiplication  are  anti-flexible.
Next, we show the first compatibility condition:
 $$\begin{aligned}
 &\Delta_{E}((a,x)(b,y))+\tau\Delta_{E}((b,y)(a,x))\\
 =&\Delta_{E}((a,x))\bullet (b,y)+(b,y)\cdot \tau\Delta_{E}((a,x))+(a,x)\bullet \Delta_{E}((b,y))+\tau\Delta_{E}((b,y))\cdot (a,x).
 \end{aligned}
$$
By direct computations, the left hand side is equal to
$$\begin{aligned}
&\Delta_{E}((a,x)(b,y))+\tau\Delta_{E}((b,y)(a,x))\\
=&\Delta_E(a b+x \trr b+a \trl y,  x y)+\tau\Delta_E(ba+y \trr a+b \trl x,  y x)\\
=&\Delta_A(a b)+\phi(a b)+\psi(a b)+\Delta_A(x \trr b)+\phi(x \trr b)+\psi(x \trr b)\\
&+\Delta_A(a \trl y)+\phi(a \trl y)+\psi(a \trl y)+\Delta_{H}(x y)\\
&+\tau\Delta_A(b a)+\tau\phi(b a)+\tau\psi(b a)+\tau\Delta_A(y \trr a)+\tau\phi(y \trr a)+\tau\psi(y \trr a)\\
&+\tau\Delta_A(b \trl x)+\tau\phi(b \trl x)+\tau\psi(b \trl x)+\tau\Delta_{H}(y x),
\end{aligned}
$$
and the right hand side is equal to
\begin{eqnarray*}
&& \Delta_{E}((a,x))\bullet (b,y)+(b,y)\cdot \tau\Delta_{E}((a,x))+(a,x)\bullet \Delta_{E}((b,y))+\tau\Delta_{E}((b,y))\cdot (a,x)\\
&=&\left(a_{1} \otimes a_{2}+a_{(-1)} \otimes a_{(0)}+a_{(0)} \otimes a_{(1)}+x_{1} \otimes x_{2}\right) \bullet(b, y)\\
&&+(b, y)\cdot\left( a_{2}\otimes a_{1}+ a_{(0)}\otimes a_{(-1)}+ a_{(1)}\otimes a_{(0)}+ x_{2}\otimes x_{1} \right)\\
&&+(a, x) \bullet\left(b_{1} \otimes b_{2}+b_{(-1)} \otimes b_{(0)}+b_{(0)} \otimes b_{(1)}+y_{1} \otimes y_{2}\right)\\
&&+\left( b_{2}\otimes b_{1}+ b_{(0)}\otimes b_{(-1)}+ b_{(1)}\otimes b_{(0)}+ y_{2}\otimes y_{1} \right)\cdot(a, x)\\
&=&a\li b\ot a\lii+(a\li \trl y)\ot a\lii+(a\loi\trr b)\ot a\loo+a\loi y\ot a\loo+a\loo b\ot a_{(1)}\\
&&+(a\loo \trl y)\ot a_{(1)}+x\li y\ot x\lii+(x\li \trr b)\ot  x\lii+\left(b a_{2}+y \trr a_{2}\right) \otimes a_{1}\\
&&+(b \trl a_{(1)}) \otimes a_{(0)}+y a_{(1)} \otimes a_{(0)}+\left(b a_{(0)}+y\trr a_{(0)}\right) \otimes a_{(-1)}\\
&&+\left(b \trl x_{2}\right) \otimes x_{1}+y x_{2}\otimes x_{1}+b\li\ot ab\lii+b\li\ot(x\trr b\lii)+b\loi\ot ab\loo\\
&&+b\loi\ot (x\trr b\loo)+b\loo\ot xb_{(1)}+b\loo\ot (a\trl b_{(1)})+y\li\ot xy\lii\\
&&+y\li\ot (a\trl y\lii)+b\lii \ot b\li a+b\lii \ot (b\li \trl x)+b\loo\ot b\loi x+b\loo\ot (b\loi\trr a)\\
&&+b_{(1)}\ot b\loo a+b_{(1)}\ot (b\loo \trl x)+y\lii\ot y\li x+y\lii\ot (y\li \trr a).
\end{eqnarray*}
Then the two sides are equal to each other if and only if

(1)$\Delta_{A}(ab)+\tau\Delta_{A}(ba)$

$\quad=a\li b\ot a\lii +b a\lii\ot a\li +b\li \ot ab\lii+b\lii\ot b\li a$

$\quad+(a\loi\trr b)\ot a\loo+(b\trl a_{(1)})\ot a\loo+b\loo\ot (a\trl b_{(1)})+b\loo\ot (b\loi \trr a)$,

(2) $\phi(x \trr b)+\tau\psi(b \trl x)=b_{(-1)}\ot (x\trr b_{(0)})+b_{(1)}\ot (b_{(0)}\trl x)$,

(3) $\psi(x \trr b)+\tau\phi(b\trl x)=\left(x\li \trr  b\right) \otimes x\lii+b_{(0)}\otimes xb_{(1)}+b_{(0)}\otimes b_{(-1)}x+(b\trl x_{2})\ot x_{1}$,

(4) $\phi(a b)+\tau\psi(ba)= b_{(-1)}\ot a b_{(0)}+b_{(1)} \otimes b_{(0)}a$,

(5) $\Delta_{A}(x\trr b)+\tau\Delta_{A}(b\trl x)= b_{2}\ot ( b_{1}\trl x)+ b_{1}\ot( x\trr  b_{2})$.

%\noindent From (6)--(9) we have that $A$ is a left and right $H$-module coalgebra and $H$-comodule algebra,
%from (2)--(5) we get that $A$ is a weighted anti-flexible Hopf bimodule over $H$, and (1) is the condition for %$A$ to be a braided weighted anti-flexible bialgebra.\\

Finally, we show the second compatibility condition:
\begin{equation*}
  (\id-\tau )\Big( \Delta_{E} ((a,x))\cdot (b,y)-(b,y)\cdot \Delta_{E}((a,x))-\Delta_{E} ((b,y))\cdot (a,x)+(a,x)\cdot \Delta_{E}((b,y)) \Big)=0. %- a\lii b\ot a\li -ab\lii \ot b\li-b\lii\ot ab\li
 \end{equation*}
By direct computation, we have
\begin{eqnarray*}
&&\Delta_{E} ((a,x)\cdot (b,y)-(b,y)\cdot \Delta_{E}(a,x)-\Delta_{E} (b,y)\cdot (a,x)+(a,x)\cdot \Delta_{E}(b,y))\\
&=&\left(a_{1} \otimes a_{2}+a_{(-1)} \otimes a_{(0)}+a_{(0)} \otimes a_{(1)}+x_{1} \otimes x_{2}\right) \cdot(b, y)\\
&&-(b,y)\cdot\left(a_{1} \otimes a_{2}+a_{(-1)} \otimes a_{(0)}+a_{(0)} \otimes a_{(1)}+x_{1} \otimes x_{2}\right)\\
&&-\left(b_{1} \otimes b_{2}+b_{(-1)} \otimes b_{(0)}+b_{(0)} \otimes b_{(1)}+y_{1} \otimes y_{2}\right)\cdot(a,x)\\
&&+(a,x)\cdot\left(b_{1} \otimes b_{2}+b_{(-1)} \otimes b_{(0)}+b_{(0)} \otimes b_{(1)}+y_{1} \otimes y_{2}\right)\\
&=&a_{1} \otimes\left(a_{2} b+a_{2} \trl y\right)+a_{(-1)} \otimes\left(a_{(0)} b+a_{(0)} \trl y\right)+a_{(0)} \otimes\left(a_{(1)} \trr b\right)\\
&&+a_{(0)} \otimes\left(a_{(1)} y\right)+x_{1} \otimes\left(x_{2} \trr b\right)+x_{1} \otimes x_{2} y-(ba\li+y\trr a\li)\ot a\lii\\
&&-(b\trl a_{(-1)}+ya_{(-1)})\ot a\loo-ba\loo \ot a_{(1)}-(y\trr a\loo)\ot a_{(1)}\\
&&-(b\trl x\li)\ot x\lii-yx\li\ot x\lii-b\li\ot (b\lii a+b\lii\trl x)-b\loi\ot(b\loo a)\\
&&-b\loi\ot (b\loo \trl x)-b\loo\ot(b_{(1)}\trr a+b_{(1)}x)-y\li\ot (y\lii\trr a)-y\li\ot y\lii x\\
&&+\left(a b_{1}+x \trr b_{1}\right) \otimes b_{2}+(a \trl b_{(-1)}) \otimes b_{(0)}+\left(x b_{(-1)}\right) \otimes b_{(0)}\\
&&+\left(a b_{(0)}+x\trr b_{(0)}\right) \otimes b_{(1)}+\left(a \trl y_{1}\right) \otimes y_{2}+\left(x y_{1}\right) \otimes y_{2}.
\end{eqnarray*}
Thus the second compatibility condition holds if and only if

 (6) $(\id-\tau)\Big(a\li\ot a\lii b-ba\li\ot a\lii-b\li\ot b\lii a+ab\li\ot b\lii  \Big)$

 $\quad+(\id-\tau)\Big(a\loo\ot(a_{(1)}\trr b)-(b\trl a\loi)\ot a\loo-b\loo\ot(b_{(1)}\trr a)+(a\trl b\loi)\ot b\loo)  \Big)=0$,

(7) $a\loi\ot a\loo b-b\loi \ot b\loo a=\tau(ab\loo\ot b_{(1)}-ba\loo\ot a_{(1)})$,

(8)$(\id-\tau)((x\trr b\li)\ot b\lii)-b\li\ot(b\lii \trl x))=0 $,

(9) $ (x\trr b\loo)\ot b_{(1)}-(b\trl x\li)\ot x\lii-b\loo\ot b_{(1)} x  $

$\quad=\tau(xb\loi\ot b\loo+x\li\ot(x\lii\trr b)-b\loi\ot (b\loo\trl x))$.

Combine with all the conditions above,  we have found  that (2)--(3) and (9) are the conditions for $A$ to be an anti-flexible Hopf bimodule; and (4)--(5) with (7)--(8) are the conditions for $A$  to be an anti-flexible algebra and anti-flexible coalgebra in ${}^{H}_{H}\mathcal{M}{}^{H}_{H}$; finally,  (1) and (6) are the conditions for $A$ to be a braided anti-flexible bialgebra.

The proof is completed.
\end{proof}

\subsection{From quasitriangular anti-flexible bialgebra to braided anti-flexible bialgebra}

Let $(A,\cdot)$ be an anti-flexible algebra and
$\displaystyle  {r}=\sum_i{u_i\otimes v_i}\in A\otimes A$. Set
\begin{equation}
 {r}_{12}=\sum_iu_i\otimes v_i\otimes 1,\quad
 {r}_{13}=\sum_{i}u_i\otimes 1\otimes v_i,\quad r_{23}=\sum_i 1\otimes u_i\otimes v_i,
\end{equation}
In this section, we consider a special class of anti-flexible bialgebras,
that is, the anti-flexible bialgebra $(A,\Delta_r)$ on an anti-flexible algebra $(A,\cdot)$, with the map $\Delta_r$ defined by
\begin{equation}\label{delta-r}
\Delta_r(a)=\sum_i u_i\otimes av_i+v_ia\otimes  u_i.
%&=&(\id\otimes L_{\cdot}(a)) {r}+(R_{\cdot}(a)\otimes\id)\tau  ({r})\\
\end{equation}
%$$\Delta_r(a)=(\id\otimes L_{\cdot}(a)) {r}+(R_{\cdot}(a)\otimes\id)\tau  ({r})$$

\begin{lemma}(\cite{DBH})
Let $(A, \cdot )$ be an anti-flexible algebra and $ {r}\in A\otimes A$. Let $\Delta_{r}:A\rightarrow A\otimes A$
be a map defined by \eqref{delta-r}.
If in addition, $ {r}$ is  skew-symmetric and $ {r}$ satisfies
\begin{equation}\label{eq_YBE}
 {r}_{{12}} {r}_{{13}}- {r}_{{23}} {r}_{{12}}+ {r}_{{13}} {r}_{{23}}=0,
\end{equation}
which is called the anti-flexible Yang-Baxter equation (AFYB).
Then $(A, \Delta_{r})$ is an anti-flexible bialgebra.
\end{lemma}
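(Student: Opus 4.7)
Since $(A,\cdot)$ is already anti-flexible by hypothesis, the task is to verify that $\Delta_r$ is co-anti-flexible and that the two compatibility relations \eqref{eq:LB2} and \eqref{eq:LB3} hold. I would first exploit the skew-symmetry of $r=\sum_i u_i\otimes v_i$, which lets one replace any sum of the form $\sum_i v_ia\otimes u_i$ by $-\sum_i u_ia\otimes v_i$ (and analogously for other placements). In particular one may rewrite $\Delta_r(a)=\sum_i u_i\otimes av_i-\sum_i u_ia\otimes v_i$. Such rewrites are what allow paired terms to collide in the computations below.

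For \eqref{eq:LB2}, I would expand $\Delta_r(ab)$ and $\tau\Delta_r(ba)$ directly, and expand each of the four right-hand terms $b\cdot\tau\Delta_r(a)$, $\tau\Delta_r(b)\cdot a$, $\Delta_r(a)\bullet b$, $a\bullet\Delta_r(b)$ using their definitions. After using skew-symmetry to pair up terms sharing a common $u_i$-slot, the discrepancy between the two sides collapses into expressions of the shape $(a,b,v_i)-(v_i,b,a)$ and $(b,a,v_i)-(v_i,a,b)$, each of which vanishes by the anti-flexible identity $(x,y,z)=(z,y,x)$. The verification of \eqref{eq:LB3} proceeds analogously: expand the four products, apply $\id-\tau$, and observe that after skew-symmetric rewriting the surviving terms again reduce to anti-flexible associator identities.

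The delicate step is co-anti-flexibility. Here one computes $(\Delta_r\otimes\id)\Delta_r(a)$ and $(\id\otimes\Delta_r)\Delta_r(a)$ explicitly, obtaining doubly-indexed sums over $i,j$. Grouping their difference according to where $a$ appears, one recognizes two kinds of contributions: purely $r$-quadratic terms, whose coefficients assemble into the three components $r_{12}r_{13}$, $r_{23}r_{12}$, $r_{13}r_{23}$ of the AFYB expression (with $a$ appearing in a fixed tensor slot), and hence vanish by the AFYB equation; and associator terms such as $u_i\otimes (v_ia)u_j\otimes v_j$ versus $u_i\otimes v_i(au_j)\otimes v_j$, whose differences are associators of the form $(v_i,a,u_j)$ and are invariant under $\tau_{13}$ by the anti-flexible identity. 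The main obstacle is purely computational: bookkeeping many doubly-indexed terms through several skew-symmetric rewrites. Conceptually, however, AFYB absorbs the ``quadratic-in-$r$'' part of the co-anti-flexible obstruction while the anti-flexible identity absorbs the ``associator'' part, and skew-symmetry of $r$ is the glue that folds redundant terms together throughout.
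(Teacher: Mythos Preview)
The paper does not give its own proof of this lemma: it is stated with a citation to \cite{DBH} and no argument. So there is no in-paper proof to compare against; you are supplying what the paper deliberately omits.

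Your outline is the standard direct verification and is correct in spirit. Two remarks. First, the compatibility identities \eqref{eq:LB2}--\eqref{eq:LB3} in fact hold for $\Delta_r$ using only skew-symmetry of $r$ together with the anti-flexible identity of $(A,\cdot)$; the AFYB equation is not needed there. Your description already reflects this (you invoke only associator cancellations), but it is worth saying explicitly that AFYB enters solely in the co-anti-flexibility check. Second, in the co-anti-flexibility computation, be careful with the claim that the purely $r$-quadratic part ``vanishes by the AFYB equation'': what one actually obtains is that these terms, after skew-symmetric rewriting, assemble into an expression of the form $a\cdot C$ or $C\cdot a$ (with $C=r_{12}r_{13}-r_{23}r_{12}+r_{13}r_{23}$) plus the $\tau_{13}$-image of the same, so AFYB makes them vanish, while the remaining associator terms are $\tau_{13}$-symmetric by anti-flexibility. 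Modulo that bookkeeping your plan is exactly how the result is proved.
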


This kind of anti-flexible bialgebra is called a quasitriangular anti-flexible bialgebra.

\begin{theorem}\label{r-braided1}
 Let $(A, \cdot, r)$ be a quasitriangular anti-flexible bialgebra and $M$ an $A$-bimodule. Then $M$ becomes an anti-flexible Hopf bimodule  over $A$ with maps $\phi: M \rightarrow A \otimes M$ and $\psi: M \rightarrow M \otimes A$ given by
\begin{equation}
\phi(m):=\sum_{i} u_{i} \otimes m \trl v_{i},\quad \psi(m):= \sum_{i}v_{i}\trr m \otimes u_{i}
\end{equation}
%\begin{equation}
%\Delta_{r}(a)=a \cdot r-r \cdot a-\lam(a\ot 1)=a u_i\ot v_i-u_i\ot v_i a-\lam a\ot 1.
%\end{equation}
\end{theorem}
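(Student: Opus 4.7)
The plan is to verify directly that $M$, equipped with its given $A$-bimodule structure and the stated maps $\phi,\psi$, satisfies the two bicomodule axioms and the three Hopf-bimodule compatibility conditions (HM1)--(HM3). The available tools are the explicit form $\Delta_r(a)=\sum_i u_i\otimes av_i+v_ia\otimes u_i$ of the coproduct, the skew-symmetry identity $\sum_i u_i\otimes v_i=-\sum_i v_i\otimes u_i$, the two anti-flexible bimodule axioms for $M$, and the anti-flexible Yang--Baxter equation $r_{12}r_{13}-r_{23}r_{12}+r_{13}r_{23}=0$. The expected division of labor is that (HM1)--(HM3) should follow from the bimodule axioms together with skew-symmetry of $r$, while the two bicomodule axioms are precisely what AFYB buys us.

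For the three compatibility conditions, I would begin with (HM1), which after substitution collapses to the identity $\sum_i u_i\otimes\bigl[(x\trr v)\trl v_i - x\trr(v\trl v_i)\bigr]=\sum_i u_i\otimes\bigl[(v_i\trr v)\trl x - v_i\trr(v\trl x)\bigr]$, an immediate consequence of the second bimodule axiom applied with $a=x$, $b=v_i$ in the second tensor slot. For (HM2), expand all terms on the right using $\Delta_r$, $\phi$, $\psi$, and rewrite $v_i\trr(x\trr v)$ on the left via the first bimodule axiom; after this the $(v_ix)\trr v$ and $v\trl(xv_i)$ contributions match two terms of the right-hand side, and the remaining four terms pair off into two cancellations via skew-symmetry of $r$ (flipping $u_i\leftrightarrow v_i$ in two distinct tensor slots). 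Condition (HM3) is handled similarly: grouping its eight resulting terms by tensor factor, the first bimodule axiom forces cancellation of the contributions with a bare $u_i$ in the second slot, while the remaining four terms again split into two skew-symmetric pairs which cancel.

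For the bicomodule axioms, expand each side using $\Delta_r(u_i)=\sum_j u_j\otimes u_iv_j+v_ju_i\otimes u_j$ together with the definitions of $\phi,\psi$. The iterated-action terms $v_j\trr(v_i\trr m)$ and $(m\trl v_i)\trl v_j$ combine via the first bimodule axiom into $(v_jv_i)\trr m + m\trl(v_iv_j)$ plus the required corrections; the residual difference between the two sides is exactly the AFYB element $r_{12}r_{13}-r_{23}r_{12}+r_{13}r_{23}$ acting on the $M$-slot through the bimodule action, and hence vanishes by hypothesis. The second bicomodule axiom reduces analogously, with skew-symmetry providing the bridge between the $\phi$-contributions and the $\psi$-contributions.

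The main obstacle is the bookkeeping in (HM3) and in the two bicomodule axioms: each side carries six to eight terms, and each invocation of AFYB requires simultaneously applying skew-symmetry in several tensor slots together with one of the bimodule axioms. Once the pairing scheme for dummy indices is fixed, every individual cancellation is a direct one-line application of a bimodule axiom or the skew-symmetry $u_i\otimes v_i=-v_i\otimes u_i$. The conceptual content is that AFYB is precisely the obstruction for $(\phi,\psi)$ to define a bicomodule structure on $M$, so its vanishing supplies exactly the compatibility needed here.
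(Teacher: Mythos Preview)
Your proposal is correct and follows essentially the same route as the paper: a direct verification of the two bicomodule axioms and of (HM1)--(HM3), using the $A$-bimodule axioms on $M$, the skew-symmetry of $r$, and the AFYB equation, with the same allocation of tools (AFYB supplies the first bicomodule identity, the bimodule axioms and skew-symmetry handle the Hopf-bimodule compatibilities). One small correction: for the second bicomodule axiom the paper does not invoke skew-symmetry at all --- after expanding $(\phi\otimes\id)\psi(m)-(\id\otimes\psi)\phi(m)$ and relabeling the dummy indices $i\leftrightarrow j$, the $(\id-\tau_{13})$-invariance reduces immediately to the second bimodule axiom $(a\trr m)\trl b-a\trr(m\trl b)=(b\trr m)\trl a-b\trr(m\trl a)$, so neither AFYB nor skew-symmetry is needed there.
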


\begin{proof}
We first prove that $M$ is a $A$-bicomodule:
$$
\left(\Delta_{r} \otimes \id\right)\phi(m)-\left(\id \otimes \phi\right) \phi(m)
=\tau_{13}\Big(\left( \psi\otimes \id \right) \psi(m)-\left(\id\ot\Delta_{r} \right)\psi(m)\Big).
$$
In fact, the left hand side is equal to
$$
\begin{aligned}
&\left(\Delta_{r} \otimes \id\right)\phi(m)-\left(\id \otimes \phi\right) \phi(m) \\
=&\Delta_{r}(u_{i})\ot m\trl v_{i}-u_{i}\ot \phi(m\trl v_{i})\\
=&u_{j}\ot u_{i}v_{j}\ot m\trl v_{i} +v_{j}u_{i}\ot u_{j}\ot m\trl v_{i}-u_{i}\ot u_{j}\ot (m\trl v_{i})\trl v_{j}\\
=&u_{i}\ot u_{j}\ot m \trl (v_{i}v_{j})-u_{i}\ot u_{j}\ot (m\trl v_{i})\trl v_{j},
\end{aligned}
$$
where the last equal holds by the (AFYB) and the antisymmetry of $r$, then right hand side is equal to
$$
\begin{aligned}
&\tau_{13}\Big(\left( \psi\otimes \id \right) \psi(m)-\left(\id\ot\Delta_{r} \right)\psi(m)\Big) \\
=&\tau_{13}\Big(\psi(v_{i}\trr m)\ot u_{i}-(v_{i}\trr m)\ot\Delta_{r}(u_{i})\Big)\\
=&\tau_{13}\Big((v_{j}\trr(v_{i}\trr m))\ot u_{j}\ot u_{i}-(v_{i}\trr m)\ot u_{j}\ot u_{i}v_{j}-(v_{i}\trr m)\ot v_{j}u_{i}\ot u_{j}  \Big)\\
=&\tau_{13}\Big((v_{j}\trr(v_{i}\trr m))\ot u_{j}\ot u_{i}+(u_{i}\trr m)\ot u_{j}\ot v_{i}v_{j}-(u_{i}\trr m)\ot u_{j}v_{i}\ot v_{j}  \Big)\\
=&\tau_{13}\Big( (v_{j}\trr(v_{i}\trr m))\ot u_{j}\ot u_{i}-((u_{i}u_{j})\trr m)\ot v_{i}\ot v_{j}   \Big)\\
=& u_{i}\ot u_{j}\ot  (v_{j}\trr(v_{i}\trr m))-v_{j}\ot v_{i}\ot ((u_{i}u_{j})\trr m)\\
=&u_{i}\ot u_{j}\ot  (v_{j}\trr(v_{i}\trr m))-u_{j}\ot u_{i}\ot ((v_{i}v_{j})\trr m)\\
=&u_{i}\ot u_{j}\ot  (v_{j}\trr(v_{i}\trr m))-u_{i}\ot u_{j}\ot ((v_{j}v_{i})\trr m).
\end{aligned}
$$
Thus the two sides are equal to each other if and only if
$$m \trl (v_{i}v_{j})-(m\trl v_{i})\trl v_{j}=v_{j}\trr(v_{i}\trr m)-(v_{j}v_{i})\trr m,$$
which holds obviously by $(ab) \trr v- a\trr ({b}\trr v)=(v \trl {b}) \trl a-v \trl ({b}a)$.

Then we need to prove
$$(\id-\tau_{13})\Big((\phi\ot \id)\psi(m)-(\id\ot \psi)\phi(m)\Big)=0.$$
We have
$$
\begin{aligned}
&(\phi\ot \id)\psi(m)-(\id\ot \psi)\phi(m)\\
=&(\phi\ot \id)((v_{i}\trr m)\ot u_{i})+(\id\ot \psi)(u_{i}\ot ({m\trl v_{i}}))\\
=&u_{j}\ot ((v_{i}\trr m)\trl v_{j})\ot u_{i}-u_{i}\ot (v_{j}\trr(m\trl v_{i}))\ot u_{j}\\
=&u_{i}\ot ((v_{j}\trr m)\trl v_{i})\ot u_{j}-u_{i}\ot (v_{j}\trr(m\trl v_{i}))\ot u_{j},
\end{aligned}
$$
then
$$
\begin{aligned}
&\tau_{13}\Big((\phi\ot \id)\psi(m)-(\id\ot \psi)\phi(m)\Big)\\
=&u_{i}\ot ((v_{i}\trr m)\trl v_{j})\ot u_{j}-u_{j}\ot (v_{j}\trr(m\trl v_{i}))\ot u_{i}\\
=&u_{i}\ot ((v_{i}\trr m)\trl v_{j})\ot u_{j}-u_{i}\ot (v_{i}\trr(m\trl v_{j}))\ot u_{j}.
\end{aligned}
$$
Since $M$ is an $A$-bimodule, by
$$ (a\trr m)\trl {b}-a \trr (m\trl {b}) =({b}\trr m)\trl a-{b} \trr (m\trl a),$$
we have $(\id-\tau_{13})\Big((\phi\ot \id)\psi(m)-(\id\ot \psi)\phi(m)\Big)=0$ holds. Thus $M$ is an $A$-bicomodule.

Secondly, we check that $M$ is an anti-flexible Hopf bimodule.
%\begin{enumerate}
%\item[(HM1)]$\phi(x \trr {m})+\tau\psi({m} \trl x)={m}_{(-1)}\ot (x\trr {m}_{(0)})+{m}_{(1)}\ot ({m}_{(0)}\trl x)$,
%\item[(HM2)]$\psi(x \trr {m})+\tau\phi({m}\trl x)=\left(x\li \trr  {m}\right) \otimes x\lii+{m}_{(0)}\otimes x{m}_{(1)}+{m}_{(0)}\otimes {m}_{(-1)}x+({m}\trl x_{2})\ot x_{1}$,
%\item[(HM3)] $ (x\trr {m}\loo)\ot {m}_{(1)}-({m}\trl x\li)\ot x\lii-{m}\loo\ot {m}_{(1)} x  $\\
%$=\tau(-{m}\loi\ot ({m}\loo\trl x)+x{m}\loi\ot {m}\loo+x\li\ot(x\lii\trr {m}))$.
%\end{enumerate}
For (HM1), we have
$$
\begin{aligned}
&\phi(x\trr m)+\tau \psi(m\trl x)\\
=&u_{i}\ot((x\trr m)\trl v_{i})+u_{i}\ot (v_{i}\trr(m\trl x))\\
=&u_{i}\ot (x\trr(m\trl v_{i}))+u_{i}\ot ((v_{i}\trr m)\trl x)\\
=&{m}_{(-1)}\ot (x\trr {m}_{(0)})+{m}_{(1)}\ot ({m}_{(0)}\trl x).
\end{aligned}
$$
For (HM2), we have
$$
\begin{aligned}
&\psi(x\trr m)+\tau\phi(m\trl x)\\
=&(v_{i}\trr(x\trr m))\ot u_{i}+((m\trl x)\trl v_{i})\ot u_{i}\\
=&((v_{i}x)\trr m)\ot u_{i}+(m\trl (xv_{i}))\ot u_{i}\\
=&(u_{i}\trr m)\ot xv_{i}+((v_{i}x)\trr m)\ot u_{i}-(u_{i}\trr m)\ot xv_{i}\\
&-(m\trl u_{i})\ot v_{i}x+(m\trl u_{i})\ot v_{i}x+(m\trl (xv_{i}))\ot u_{i}\\
=&(u_{i}\trr m)\ot xv_{i}+((v_{i}x)\trr m)\ot u_{i}+(v_{i}\trr m)\ot xu_{i}\\
&+(m\trl v_{i})\ot u_{i}x+(m\trl u_{i})\ot v_{i}x+(m\trl (xv_{i}))\ot u_{i}\\
=&\left(x\li \trr  {m}\right) \otimes x\lii+{m}_{(0)}\otimes x{m}_{(1)}+{m}_{(0)}\otimes {m}_{(-1)}x+({m}\trl x_{2})\ot x_{1}.
\end{aligned}
$$
For (HM3), we have
$$
\begin{aligned}
 &(x\trr m\loo)\ot m_{(1)}-(m\trl x\li)\ot x\lii-m\loo\ot m_{(1)} x  \\
=&(x\trr(v_{i}\trr m))\ot u_{i}-(m\trl u_{i})\ot xv_{i}-(m\trl(v_{i}x))\ot u_{i}-(v_{i}\trr m)\ot u_{i}x\\
=&(x\trr(v_{i}\trr m))\ot u_{i}+(m\trl v_{i})\ot xu_{i}-(m\trl(v_{i}x))\ot u_{i}+(u_{i}\trr m)\ot v_{i}x\\
=&-((m\trl v_{i})\trl x)\ot u_{i}+(m\trl v_{i})\ot xu_{i}+((xv_{i})\trr m)\ot u_{i}+(u_{i}\trr m)\ot v_{i}x\\
=&\tau(-u_{i}\ot((m\trl v_{i})\trl x)+xu_{i}\ot(m\trl v_{i})+u_{i}\ot((xv_{i})\trr m)+v_{i}x\ot(u_{i}\trr m))\\
=&\tau(xm\loi\ot m\loo+x\li\ot(x\lii\trr m)-m\loi\ot (m\loo\trl x)).
\end{aligned}
$$
This completed the proof.
\end{proof}

\begin{theorem}\label{r-braided2}
 Let $(A, \cdot, r)$ be a quasitriangular  anti-flexible bialgebra. Then $A$ becomes a braided  anti-flexible bialgebra over itself with $M=A$ and $\phi: M \rightarrow A \otimes M$ and $\psi: M \rightarrow M \otimes A$ are given by
 \begin{equation}
\phi(a):=\sum_{i} u_{i} \otimes av_{i} ,\quad \psi(a):= \sum_{i} v_{i}a \otimes u_{i},
\end{equation}
\end{theorem}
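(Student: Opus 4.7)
The plan is to apply Theorem~\ref{r-braided1} with $M=A$ viewed as the regular $A$-bimodule under left and right multiplication, which immediately furnishes the required anti-flexible Hopf bimodule structure on $A$ with the prescribed $\phi$ and $\psi$. The key observation guiding the rest of the argument is that the quasitriangular coproduct decomposes as $\Delta_{r}(a)=\phi(a)+\psi(a)$; this essentially forces the braided comultiplication $\Delta_{A}$ on $A$ to be taken as zero, so that on the biproduct $A\lbiprod A$ the first-summand coalgebra data is carried by the coactions alone, and co-anti-flexibility of $\Delta_{A}=0$ is then automatic.

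With these choices I would verify the axioms of a braided anti-flexible bialgebra in the order used in the proof of the bosonisation theorem above. Conditions (2), (3), (9) identifying $A$ as an anti-flexible Hopf bimodule over itself are provided by Theorem~\ref{r-braided1}. Conditions (5) and (8), which involve $\Delta_{A}$, hold vacuously; and the $H$-bimodule algebra axioms of $A$ over itself reduce directly to the anti-flexibility of $A$, for instance $(xa)b-x(ab)=(ba)x-b(ax)$. The $H$-bicomodule algebra compatibilities (4) and (7) become, upon substituting the formulas for $\phi$ and $\psi$, the identities $u_{i}\otimes\bigl((a,b,v_{i})-(v_{i},b,a)\bigr)=0$ and $u_{i}\otimes\bigl((a,v_{i},b)-(b,v_{i},a)\bigr)=0$, both of which are immediate instances of $(a,b,c)=(c,b,a)$.

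It remains to verify (BB1) and (BB2). Since $\Delta_{A}=0$, only the four coaction contributions survive on each side. For (BB1) these read $(u_{i}b)\otimes(av_{i})+(bu_{i})\otimes(v_{i}a)+(v_{i}b)\otimes(au_{i})+(bv_{i})\otimes(u_{i}a)$, and the skew-symmetry $\sum u_{i}\otimes v_{i}=-\sum v_{i}\otimes u_{i}$, extended by right multiplication in each tensor slot, pairs the first with the third term and the second with the fourth, giving perfect cancellation. An analogous slot-by-slot use of skew-symmetry shows that the bracketed expression inside $(\id-\tau)$ in (BB2) is $\tau$-invariant, so $(\id-\tau)$ annihilates it.

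I expect no genuine obstacle. The sole conceptual step is recognising $\Delta_{A}=0$ as the correct braided coproduct, which is dictated by the decomposition $\Delta_{r}=\phi+\psi$; after that, each remaining compatibility reduces to a single application of either the anti-flexibility of $A$ or the skew-symmetry of $r$. Notably, the anti-flexible Yang--Baxter equation does not reappear here, because its role has already been discharged inside Theorem~\ref{r-braided1} in establishing the bicomodule axioms.
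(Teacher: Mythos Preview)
Your argument is internally coherent and does produce a braided anti-flexible bialgebra structure, but it is not the structure the paper is proving. You take the braided comultiplication on $A$ to be zero, reasoning from the decomposition $\Delta_{r}=\phi+\psi$; the paper instead keeps $\Delta_{A}=\Delta_{r}$ as the braided coproduct. With the paper's choice, the left-hand sides of (BB1) and (BB2) are genuinely nonzero, and the verification requires expanding all eight terms on the right of (BB1) (four from $\Delta_{r}$ and four from the coactions), applying skew-symmetry to cancel matching pairs, and then using the anti-flexible identity $(a,b,v_{i})=(v_{i},b,a)$ to reassemble what remains into $\Delta_{r}(ab)+\tau\Delta_{r}(ba)$. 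The paper's accompanying Remark then records, as a byproduct of that computation, that the four coaction contributions cancel among themselves --- which is exactly your whole (BB1) computation.

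So your route is strictly easier: with $\Delta_{A}=0$ the $H$-bimodule/bicomodule coalgebra axioms are vacuous and (BB1), (BB2) collapse to the skew-symmetry cancellation you wrote down. What you lose is the content: you exhibit only the degenerate braided bialgebra with trivial coproduct, whereas the paper shows that the quasitriangular coproduct $\Delta_{r}$ itself satisfies the braided compatibility conditions. If you want to match the paper, keep $\Delta_{A}=\Delta_{r}$; then (BB1) and (BB2) no longer reduce to pure skew-symmetry, and you will need the anti-flexible identity (though, as you correctly anticipated, not the AFYB equation beyond what Theorem~\ref{r-braided1} already consumed).
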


\begin{proof}
All we need to do is to verify the braided compatibility conditions (BB1) and (BB2).
%\begin{enumerate}
%\item[(BB1)]
%$\Delta_{A}(ab)+\tau\Delta_{A}(ba)$\\
%$=a\li b\ot a\lii +b a\lii\ot a\li +b\li \ot ab\lii+b\lii\ot b\li a$\\
%$+(a\loi\trr b)\ot a\loo+b\trl a_{(1)}\ot a\loo+b\loo\ot (a\trl b_{(1)})+b\loo\ot (b\loi \trr a)$,
%\end{enumerate}
%\begin{enumerate}
%\item[(BB2)]
%$(\id-\tau)\Big(a\li\ot a\lii b-ba\li\ot a\lii-b\li\ot b\lii a+ab\li\ot b\lii  \Big)$\\
% $+(\id-\tau)\Big(a\loo\ot(a_{(1)}\trr b)-(b\trl a\loi)\ot a\loo-b\loo\ot(b_{(1)}\trr a)+(a\trl b\loi)\ot b\loo)  \Big)=0$.
%\end{enumerate}
For (BB1), we have the right hand side is equal to the left hand side by
\begin{eqnarray*}
 &&a\li b\ot a\lii +b a\lii\ot a\li +b\li \ot ab\lii+b\lii\ot b\li a\\
&&+a\loi b\ot a\loo+b a_{(1)}\ot a\loo+b\loo\ot a b_{(1)}+b\loo\ot b\loi  a\\
&=&u_{i}b\ot av_{i}+(v_{i}a)b\ot u_{i}+b(av_{i})\ot u_{i}+bu_{i}\ot v_{i}a+u_{i}\ot a(bv_{i})+v_{i}b\ot au_{i}\\
&&+bv_{i}\ot u_{i}a+u_{i}\ot (v_{i}b)a+u_{i} b\ot a v_{i}+b u_{i}\ot v_{i} a\\
&&+v_{i} b\ot a u_{i}+b v_{i}\ot u_{i} a\\
&=&u_{i}b\ot av_{i}+(v_{i}a)b\ot u_{i}+b(av_{i})\ot u_{i}-bv_{i}\ot u_{i}a+u_{i}\ot a(bv_{i})-u_{i}b\ot av_{i}\\
&&+bv_{i}\ot u_{i}a+u_{i}\ot (v_{i}b)a+u_{i} b\ot a v_{i}+b u_{i}\ot v_{i} a\\
&&-u_{i} b\ot a v_{i}-b u_{i}\ot v_{i} a\\
&=& (v_{i}a)b\ot u_{i}+b(av_{i})\ot u_{i} +u_{i}\ot a(bv_{i})+u_{i}\ot (v_{i}b)a  \\
&=&u_{i}\ot (ab)v_{i}+v_{i}(ab)\ot u_{i}+(ba)v_{i}\ot u_{i}+u_{i}\ot v_{i}(ba)\\
&=&\Delta_{r}(ab)+\tau\Delta_{r}(ba)
\end{eqnarray*}
where we use  the conditions that $ {r}$ is  skew-symmetric and the anti-flexible Yang-Baxter equation (AFYB).
Thus (BB1) holds.

For  (BB2),  by similar computations,  we obtain
\begin{eqnarray*}
&&a\li\ot a\lii b-ba\li\ot a\lii-b\li\ot b\lii a+ab\li\ot b\lii  \\
 &&+a\loo\ot a_{(1)}  b-b a\loi\ot a\loo-b\loo\ot b_{(1)}  a+a b\loi\ot b\loo)\\
&=&u_{i}\ot (av_{i})b+v_{i}a\ot u_{i}b-bu_{i}\ot av_{i}-b(v_{i}a)\ot u_{i}-u_{i}\ot (bv_{i})a-v_{i}b\ot u_{i}a
 +au_{i}\ot bv_{i}\\
 &&+a(v_{i}b)\ot u_{i}+v_{i}  a\ot u_{i}  b - b u_{i}\ot a v_{i}-v_{i}  b\ot u_{i}  a+a u_{i}\ot b v_{i}\\
 &=&u_{i}\ot a(v_{i}b)+v_{i}a\ot u_{i}b-bu_{i}\ot av_{i}-(bv_{i})a\ot u_{i}-u_{i}\ot b(v_{i}a)-v_{i}b\ot u_{i}a+au_{i}\ot bv_{i}\\
 &&+(av_{i})b\ot u_{i}+v_{i}  a\ot u_{i}  b-b u_{i}\ot a v_{i}-v_{i}  b\ot u_{i}  a+a u_{i}\ot b v_{i}\\
 &=&(av_{i})b \ot u_{i}-v_{i}b\ot u_{i}a+au_{i}\ot bv_{i}-u_{i}\ot b(v_{i}a)-(bv_{i})a\ot u_{i}+v_{i}a\ot u_{i}b-bu_{i}\ot av_{i}\\
 &&+u_{i}\ot a(v_{i}b)-v_{i}  b\ot u_{i}  a+a u_{i}\ot b v_{i}+v_{i}  a\ot u_{i}  b-b u_{i}\ot a v_{i}\\
 &=&\tau\Big(  u_{i}\ot (av_{i})b+v_{i}a\ot u_{i}b-bu_{i}\ot av_{i}-b(v_{i}a)\ot u_{i}-u_{i}\ot (bv_{i})a-v_{i}b\ot u_{i}a+au_{i}\ot bv_{i}\\
 &&+a(v_{i}b)\ot u_{i}+v_{i}  a\ot u_{i}  b-b u_{i}\ot a v_{i}-v_{i}  b\ot u_{i}a+a u_{i}\ot b v_{i}\Big)\\
 &=&\tau\Big( a\li\ot a\lii b-ba\li\ot a\lii-b\li\ot b\lii a+ab\li\ot b\lii  \\
 &&+a\loo\ot a_{(1)}b-b a\loi\ot a\loo-b\loo\ot b_{(1)}a+a b\loi\ot b\loo\Big).
\end{eqnarray*}
Thus (BB2) holds. This completed the proof.
\end{proof}

\begin{remark}
In fact, we find that the braided terms $(a\loi\trr b)\ot a\loo+(b\trl a_{(1)})\ot a\loo+b\loo\ot (a\trl b_{(1)})+b\loo\ot (b\loi \trr a)$ and $(\id-\tau)(a\loo\ot(a_{(1)}\trr b)-(b\trl a\loi)\ot a\loo-b\loo\ot(b_{(1)}\trr a)+(a\trl b\loi)\ot b\loo))$ are equal to zero in the above proof.
Thus what we have obtained is a braided  anti-flexible bialgebra with zero braided terms.
It is an open question for us whether there exists a braided  anti-flexible bialgebra with nonzero braided terms coming from  a quasitriangular  anti-flexible bialgebra.
For this direction, one should give or classify all anti-flexible Hopf bimodule stuctures which are different from $\phi$ and $\psi$ given by us in Theorem \ref{r-braided1}.
Unfortunately, we have not found other natural anti-flexible Hopf bimodule beyond above until now.
\end{remark}

\section{Cocycle bicrossproducts of  anti-flexible bialgebras}
\subsection{Matched pair of braided   anti-flexible bialgebras}
%The general theory of matched pairs of   algebras was introduced in \cite[Definition 2.1.5]{Bai10} and \cite[Definition 3.9]{AM6}.
In this section, we construct   anti-flexible bialgebra from the double cross biproduct of a matched pair of braided  anti-flexible bialgebras.

Let $A, H$ be both anti-flexible  algebras and  anti-flexible coalgebras.   For any $a, b\in A$, $x, y\in H$,  we denote maps
\begin{align*}
&\ppr: H \otimes A \to A, \quad  \ppl: A\otimes H\to A,\\
&\trr: A\otimes H \to H,  \quad   \trl: H\otimes A \to H,\\
&\phi: A \to H \otimes A, \quad  \psi: A \to A\otimes H,\\
&\rho: H  \to A\otimes H, \quad  \gamma: H \to H \otimes A,
\end{align*}
by
\begin{eqnarray*}
&& \ppr (x \otimes a) = x \ppr a, \quad \ppl(a\otimes x) = a \ppl x, \\
&& \trr (a \otimes x) = a \trr x, \quad \trl(x \otimes a) = x \triangleleft a, \\
&& \phi (a)=\sum a\lmoi\ot a\loo, \quad \psi (a) = \sum a\loo \ot a\lmi,\\
&& \rho (x)=\sum x\boi\ot x\boo, \quad \gamma (x) = \sum x\boo \ot x\bi.
\end{eqnarray*}

\begin{definition}(\cite{DBH})
A \emph{matched pair} of anti-flexible  algebras is a system $(A, \, {H},\, \trl, \, \trr, \, \ppl, \, \ppr)$ consisting
of two anti-flexible  algebras $A$ and  ${H}$ and four bilinear maps $\triangleleft : {H}\otimes A\to {H}$, $\trr: {A} \otimes H
\to H$, $\ppl: A \otimes {H} \to A$, $\ppr: H\otimes {A} \to {A}$ such that $({H}, \, \trr, \, \trl)$ is an $A$-bimodule,  $(A, \, \ppr, \, \ppl)$ is an ${H}$-bimodule and satisfying the following compatibilities:
\begin{enumerate}
\item[(AM1)] $x \ppr(a b)+(ba)\ppl x =(x\ppr a) b+b(a\ppl x)+(x\trl a) \ppr b+b\ppl(a\trr x)$,
\item[(AM2)] $(a\ppl x)b+(a\trr x)\ppr b -a(x\ppr b)-a\ppl(x\trl b)$\\
$=(b\trr x)\ppr a+(b\ppl x)a-b(x\ppr a)-b\ppl (x\trl a)$,
\item[(AM3)]$(x y) \trl a+a \trr (yx)=x(y \trl a)+(a \trr y)x+x\trl(y\ppr a)+(a\ppl y)\trr x$,
\item[(AM4)]$ (x\trl a) y+(x\ppr a) \trr y-x(a\trr y)-x\trl(a\ppl y)$\\
$=(y\trl a)x+(y\ppr a)\trr x-y(a\trr x)-y\trl(a\ppl x)$.
\end{enumerate}
\end{definition}

\begin{lemma}(\cite{DBH})
Let  $(A, \, {H}, \, \trl, \, \trr, \, \ppl, \, \ppr)$ be a matched pair of  anti-flexible   algebras.
Then $A \, \bowtie {H}:= A \oplus  {H}$, as a vector space, with the multiplication defined for all $a, b\in A$ and $x, y\in {H}$ by
\begin{equation}
(a, x) (b, y): = \big(ab+ a \ppl y + x\ppr b,\, \, a\trr y + x\trl b + xy \big)
\end{equation}
is an  anti-flexible  algebra called the \emph{bicrossed product} associated to the matched pair of   anti-flexible  algebras $A$ and ${H}$.
\end{lemma}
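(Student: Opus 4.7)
The plan is to verify the anti-flexible identity
\[
\big((a,x)(b,y)\big)(c,z) - (a,x)\big((b,y)(c,z)\big) = \big((c,z)(b,y)\big)(a,x) - (c,z)\big((b,y)(a,x)\big)
\]
directly by expanding each term via the definition of the bicrossed product multiplication, then projecting the result onto its $A$-component and $H$-component separately. Once projected, each component splits into a sum of monomials that can be classified according to how many arguments are drawn from $A$ versus $H$ (there are $2^3 = 8$ such types: $AAA$, $AAH$, $AHA$, $HAA$, $AHH$, $HAH$, $HHA$, $HHH$).

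The claim then reduces to eight separate identities, which I would tackle in order of increasing mixing. The pure types $AAA$ and $HHH$ are exactly the anti-flexible identities in $A$ and $H$, respectively, so these are immediate. The mixed types with a single $H$ in various positions (i.e.\ $AAH$, $AHA$, $HAA$) yield, on the $A$-component, the two bimodule equations that express that $(A,\ppl,\ppr)$ is an $H$-bimodule; on the $H$-component these same types produce the compatibilities (AM1) and (AM2). Dually, the types $AHH$, $HAH$, $HHA$ yield on the $H$-component the two $A$-bimodule conditions for $(H,\trl,\trr)$, and on the $A$-component they produce (AM3) and (AM4). In other words, the four matched pair axioms (AM1)--(AM4) together with the four bimodule axioms are precisely what is needed to balance the six mixed types, and the two pure types handle themselves.

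Concretely, I would first write out $(a,x)(b,y) = (ab + a\ppl y + x\ppr b,\, a\trr y + x\trl b + xy)$ and then multiply by $(c,z)$ on the right, being careful to apply the multiplication rule to each summand; this produces a long but mechanical expansion with roughly a dozen terms in each component. I would then subtract the corresponding expansion of $(a,x)\big((b,y)(c,z)\big)$ to obtain the left-hand associator. Because the anti-flexible identity we must verify is exactly symmetric under swapping the outer arguments $a \leftrightarrow c$ (and $x \leftrightarrow z$), the right-hand side of the identity is obtained from the left by this swap, and the desired equation is the statement that the associator in $A \bowtie H$ is symmetric under this swap. Grouping summands of each of the eight types on both sides, the cancellations and the required axioms fall out as described above.

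The main obstacle is purely bookkeeping: organizing the dozens of terms without error and correctly identifying which axiom absorbs which family of summands. There is no conceptual difficulty, since the matched pair axioms were engineered so that exactly the mixed-type residues vanish; the only care needed is to ensure that when one reads off the $A$-part versus $H$-part of, say, the $HAH$-type contribution on both sides and subtracts, one recovers (AM4) rather than something spurious, and likewise for the other mixed types. Since this result is already stated in \cite{DBH}, I would in practice cite it and record the key axiom-to-type correspondence in a brief table rather than write out the full expansion.
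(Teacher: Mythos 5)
Your overall strategy is the right one and is exactly how the paper handles the analogous (more general) statements: expand both associators, separate the $A$- and $H$-components, and sort the resulting multilinear pieces by how many arguments come from $H$; the pure types give anti-flexibility of $A$ and $H$, and the mixed types must be absorbed by the bimodule axioms together with (AM1)--(AM4). (The paper itself does not reprove this lemma --- it cites \cite{DBH} --- but its proof of the cocycle cross product lemma is precisely this computation with extra $\sigma$- and $\theta$-terms.)

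However, the axiom-to-type correspondence you record --- which you yourself identify as the one place where care is needed --- is inverted. Note that $\ppr\colon H\ot A\to A$ and $\ppl\colon A\ot H\to A$ are $A$-valued, while $\trr\colon A\ot H\to H$ and $\trl\colon H\ot A\to H$ are $H$-valued. Conditions (AM1) and (AM2) are identities \emph{in $A$} involving \emph{one} element of $H$ and two of $A$ (e.g.\ $x\ppr(ab)+(ba)\ppl x=(x\ppr a)b+b(a\ppl x)+(x\trl a)\ppr b+b\ppl(a\trr x)$), so they arise from the $A$-component of the single-$H$ types $AAH$, $AHA$, $HAA$; the $H$-component of those same types yields the two conditions making $(H,\trr,\trl)$ an $A$-bimodule. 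Dually, (AM3) and (AM4) are identities \emph{in $H$} with two $H$-arguments, so they come from the $H$-component of the double-$H$ types $AHH$, $HAH$, $HHA$, whose $A$-component yields the two conditions making $(A,\ppr,\ppl)$ an $H$-bimodule. Concretely, for the type $AHA$ one finds the $A$-component identity $(a\ppl y)c+(a\trr y)\ppr c-a(y\ppr c)-a\ppl(y\trl c)=(c\ppl y)a+(c\trr y)\ppr a-c(y\ppr a)-c\ppl(y\trl a)$, which is (AM2), and the $H$-component identity $(a\trr y)\trl c-a\trr(y\trl c)=(c\trr y)\trl a-c\trr(y\trl a)$, which is the second bimodule axiom for $H$ --- the opposite of what you assert. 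With this correspondence corrected, your table closes and the proof goes through; as written, the claimed matching would not be recovered by the computation.
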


Now we introduce the notion of matched pairs of  anti-flexible  coalgebras, which is the dual version of  matched pairs of  anti-flexible algebras.

\begin{definition} A \emph{matched pair} of  anti-flexible   coalgebras is a system $(A, \, {H}, \, \phi, \, \psi, \, \rho, \, \gamma)$ consisting of two anti-flexible   coalgebras $A$ and ${H}$ and four bilinear maps
$\phi: {A}\to H\otimes A$, $\psi: {A}\to A \otimes H$, $\rho: H\to A\otimes {H}$, $\gamma: H \to {H} \ot {A}$
such that $({H}, \, \rho, \, \gamma)$ is an $A$-bicomodule,  $(A, \, \phi, \, \psi)$ is an ${H}$-bicomodule and satisfying the following compatibility conditions:

\begin{enumerate}
\item[(MC1)] $\phi\left(a_{1}\right) \otimes a_{2}+\gamma\left(a_{(-1)}\right) \otimes a_{(0)}-a_{(-1)} \otimes \Delta_{A}\left(a_{(0)}\right)\\
    =\tau_{13}\left(\Delta_{A}\left(a_{(0)}\right) \otimes a_{(1)} -a_{1} \otimes \psi\left(a_{2}\right)-a_{(0)} \otimes \rho\left(a_{(1)}\right)\right)$,

\item[(MC2)] $\psi\left(a_{1}\right) \otimes a_{2}+\rho\left(a_{(-1)}\right) \otimes a_{(0)}-a_{1} \otimes \phi(a_{2})-a_{(0)} \otimes \gamma(a_{(1)})\\
     =\tau_{13}\left(\psi\left(a_{1}\right) \otimes a_{2}+\rho\left(a_{(-1)}\right) \otimes a_{(0)}-a_{1} \otimes \phi(a_{2})-a_{(0)} \otimes \gamma(a_{(1)})\right)$,
\item[(MC3)] $\rho\left(x_{1}\right) \otimes x_{2}+\psi\left(x_{[-1]}\right) \otimes x_{[0]}-x_{[-1]} \otimes \Delta_{H}\left( x_{[0]}\right)  \\
    =\tau_{13}\left(\Delta_{H}(x\boo)\ot x\bi-x\boo\ot \phi(x\bi)-x_1\ot \gamma(x_2) \right)$,

\item[(MC4)] $\gamma\left(x_{1}\right) \otimes x_{2}+\phi\left(x_{[-1]}\right) \otimes x_{[0]}-x_{[0]} \otimes \psi\left(x_{[1]}\right)-x_{1} \otimes \rho(x_{2})\\
=\tau_{13}\left(\gamma\left(x_{1}\right) \otimes x_{2}+\phi\left(x_{[-1]}\right) \otimes x_{[0]}-x_{[0]} \otimes \psi\left(x_{[1]}\right)-x_{1} \otimes \rho(x_{2})\right)$.
\end{enumerate}
\end{definition}

\begin{lemma}\label{lem1} Let $(A, \, {H}, \, \phi, \, \psi, \, \rho, \, \gamma)$ be a matched pair of  anti-flexible   coalgebras. We define $E=A\lrcoprod H$ as the vector space $A\oplus H$ with   comultiplication
$$\Delta_{E}(a)=(\Delta_{A}+\phi+\psi)(a),\quad\Delta_{E}(x)=(\Delta_{H}+\rho+\gamma)(x),$$
that is
$$\Delta_{E}(a)=\sum a\li \ot a\lii+\sum a\loi \ot a\loo+\sum a\mo\ot a\mi, $$
$$\Delta_{E}(x)=\sum x\li \ot x\lii+\sum  x\boi \ot x\boo+\sum x\boo \ot x\bi.$$
Then  $A\lrcoprod H$ is an   anti-flexible  coalgebra which is called the \emph{bicrossed coproduct} associated to the matched pair of anti-flexible  coalgebras $A$ and $H$.
\end{lemma}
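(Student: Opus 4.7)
My plan is to verify the co-anti-flexible identity
$$(\Delta_E \otimes \id)\Delta_E(e) - (\id \otimes \Delta_E)\Delta_E(e) = \tau_{13}\Bigl((\Delta_E \otimes \id)\Delta_E(e) - (\id \otimes \Delta_E)\Delta_E(e)\Bigr)$$
for all $e \in E = A \oplus H$. By linearity it is enough to treat the two cases $e = a \in A$ and $e = x \in H$, and the second is entirely parallel to the first under the symmetry $A \leftrightarrow H$, $\Delta_A \leftrightarrow \Delta_H$, $\phi \leftrightarrow \rho$, $\psi \leftrightarrow \gamma$. So I focus on $e = a$.

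First I would expand $\Delta_E(a) = \Delta_A(a) + \phi(a) + \psi(a)$ and then apply $\Delta_E \otimes \id$ and $\id \otimes \Delta_E$, remembering that $\Delta_E$ acts through $(\Delta_A,\phi,\psi)$ on $A$-factors and through $(\Delta_H,\rho,\gamma)$ on $H$-factors. This produces nine terms on each side, which I organize by the ``type'' of the three tensor slots in $A \oplus H$. Since $e \in A$, only the six types $AAA$, $AAH$, $AHA$, $HAA$, $AHH$, $HAH$, $HHA$ appear; the map $\tau_{13}$ fixes $AAA$, $AHA$, $HAH$ and interchanges $AAH \leftrightarrow HAA$ and $AHH \leftrightarrow HHA$. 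So the identity reduces to a separate check in each of these five bins.

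Next I would match each bin with the appropriate hypothesis. In the $AAA$ bin the contribution is exactly $(\Delta_A \otimes \id)\Delta_A(a) - (\id \otimes \Delta_A)\Delta_A(a)$, which is $\tau_{13}$-invariant by co-anti-flexibility of $A$. In the $HAA$ and $AAH$ bins one collects, respectively,
$$\phi(a_1) \otimes a_2 + \gamma(a\moi) \otimes a\mo - a\moi \otimes \Delta_A(a\mo)$$
$$\text{and}\quad \Delta_A(a\mo) \otimes a\mi - a_1 \otimes \psi(a_2) - a\mo \otimes \rho(a\mi),$$
and condition (MC1) says exactly that the first equals $\tau_{13}$ of the second. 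The $AHA$ bin collects $\psi(a_1) \otimes a_2 + \rho(a\moi) \otimes a\mo - a_1 \otimes \phi(a_2) - a\mo \otimes \gamma(a\mi)$, whose $\tau_{13}$-invariance is precisely (MC2). Finally, the $HHA$ vs.\ $AHH$ pairing and the $HAH$ self-pairing reduce to the two $H$-bicomodule axioms for $(A,\phi,\psi)$, namely
$$(\Delta_H \otimes \id_A)\phi(a) - (\id_H \otimes \phi)\phi(a) = \tau_{13}\bigl((\psi \otimes \id_H)\psi(a) - (\id_A \otimes \Delta_H)\psi(a)\bigr)$$
and the ``$\tau_{13}$-invariance of $(\phi \otimes \id_H)\psi(a) - (\id_A \otimes \psi)\phi(a)$'' axiom. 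Once this bookkeeping is done, the case $e = x \in H$ follows by the same template, with (MC3), (MC4), and the $A$-bicomodule axioms for $(H,\rho,\gamma)$ playing the analogous roles.

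The calculation is essentially routine once the types are separated; the only real obstacle is the clerical one of keeping $\tau_{13}$ applied on the right side consistent with the way (MC1)--(MC4) are stated. Both sets of compatibility conditions were written with the $\tau_{13}$ on exactly one side in order to match the co-anti-flexible defect, so the correspondence between bins and axioms is one-to-one and no genuine identity beyond those already assumed in the matched-pair definition and bicomodule axioms is required.
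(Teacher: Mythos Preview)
Your proposal is correct and follows essentially the same direct-computation approach as the paper: expand $(\Delta_E\otimes\id)\Delta_E - (\id\otimes\Delta_E)\Delta_E$ term by term and match the resulting pieces against the matched-pair axioms. Your organization into tensor-type bins ($AAA$, $HAA$, $AHA$, etc.) makes the correspondence with (MC1)--(MC4) and the bicomodule conditions more transparent than the paper's raw expansion, but the underlying argument is identical (note the minor slip: you write ``six types'' but correctly list seven).
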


\begin{proof} We need to prove that
 $$(\Delta_E \otimes \id) \Delta_E(a, x)-(\id \otimes \Delta_E) \Delta_E(a, x)=\tau_{13}\left((\Delta_E \otimes \id) \Delta_E(a, x)-(\id \otimes \Delta_E) \Delta_E(a, x) \right).$$ 
 The left hand side is equal to
\begin{eqnarray*}
& &(\Delta_E \otimes \id) \Delta_E(a, x)-(\id \otimes \Delta_E) \Delta_E(a, x)\\
&=&(\Delta_E\otimes \id)\left(a_{1} \otimes a_{2}+a_{(-1)} \otimes a_{(0)}+a_{(0)} \otimes a_{(1)}+x_{1} \otimes x_{2}+x_{[-1]} \otimes x_{[0]}+x_{[0]} \otimes x_{[1]}\right)\\
&&-(\id \otimes \Delta_E)\left(a_{1} \otimes a_{2}+a_{(-1)} \otimes a_{(0)}+a_{(0)} \otimes a_{(1)}+x_{1} \otimes x_{2}+x_{[-1]} \otimes x_{[0]}+x_{[0]} \otimes x_{[1]}\right)\\
&=&\Delta_{A}\left(a_{1}\right) \otimes a_{2}+\phi\left(a_{1}\right) \otimes a_{2}+\psi\left(a_{1}\right) \otimes a_{2}\\
&&+\Delta_{H}\left(a_{(-1)}\right) \otimes a_{(0)}+\rho\left(a_{(-1)}\right) \otimes a_{(0)}+\gamma\left(a_{(-1)}\right) \otimes a_{(0)}\\
&&+\Delta_{A}\left(a_{(0)}\right) \otimes a_{(1)}+\phi\left(a_{(0)}\right) \otimes a_{(1)}+\psi\left(a_{(0)}\right) \otimes a_{(1)}\\
&&+\Delta_{H}\left(x_{1}\right) \otimes x_{2}+\rho\left(x_{1}\right) \otimes x_{2}+\gamma\left(x_{1}\right) \otimes x_{2}\\
&&+\Delta_{A}\left(x_{[-1])}\right) \otimes x_{[0]}+\phi\left(x_{[-1]}\right) \otimes x_{[0]}+\psi\left(x_{[-1]}\right) \otimes x_{[0]}\\
&&+\Delta_{H}\left(x_{[0]}\right) \otimes x_{[1]}+\rho\left(x_{[0]}\right) \otimes x_{[1]}+\gamma\left(x_{[0]}\right) \otimes x_{[1]}\\
&&-a_{1} \otimes \Delta_{A}\left(a_{2}\right)-a_{1} \otimes \phi\left(a_{2}\right)-a_{1} \otimes \psi\left(a_{2}\right)\\
&&-a_{(-1)} \otimes \Delta_{A}\left(a_{(0)}\right)-a_{(-1)} \otimes \phi\left(a_{(0)}\right)-a_{(-1)} \otimes \psi\left(a_{(0)}\right)\\
&&-a_{(0)} \otimes \Delta_{H}\left(a_{(1)}\right)-a_{(0)} \otimes \rho\left(a_{(1)}\right)-a_{(0)} \otimes \gamma\left(a_{(1)}\right)\\
&&-x_{1} \otimes \Delta_{H}\left(x_{2}\right)-x_{1} \otimes \rho\left(x_{2}\right)-x_{1} \otimes \gamma\left(x_{2}\right)\\
&&-x_{[-1]} \otimes \Delta_{H}\left(x_{[0]}\right)-x_{[-1]} \otimes \rho\left(x_{[0]}\right)-x_{[-1]} \otimes \gamma\left(x_{[0]}\right)\\
&&-x_{[0]} \otimes \Delta_{A}\left(x_{[1]}\right)-x_{[0]} \otimes \phi\left(x_{[1]}\right)-x_{[0]} \otimes \psi\left(x_{[1]}\right).
\end{eqnarray*}
The right hand side can be computed similarly. Thus the two side are equal to each other  if and only if $(A, \, {H}, \, \phi, \, \psi, \, \rho, \, \gamma)$ is a matched pair of  anti-flexible coalgebras.
\end{proof}

In the following of this section, we construct  anti-flexible bialgebra from the double cross biproduct of a matched pair of braided anti-flexible bialgebras.
First we generalize the concept of anti-flexible Hopf bimodule to the case of $A$ is not necessarily an anti-flexible bialgebra but it is both an anti-flexible algebra and an anti-flexible  coalgebra.
But by abuse of notation, we also call it anti-flexible Hopf bimodule.
\begin{definition}
 Let $A$ be an anti-flexible algebra and an anti-flexible  coalgebra . An anti-flexible Hopf bimodule over $A$ is a space $V$ endowed with maps
\begin{align*}
&\trr: A\otimes V \to V, \quad \trl: V\otimes A \to V,\\
&\phi:V \to A \otimes V, \quad  \psi: V \to V\otimes A,
\end{align*}
such that $V$ is simultaneously a bimodule, a  bicomodule over $A$ and satisfying
 the following compatibility conditions
 \begin{enumerate}
\item[(HM4)]$\phi(a \trr v)+\tau\psi(v \trl a)=v_{(-1)}\ot (a\trr v_{(0)})+v_{(1)}\ot (v_{(0)}\trl a)$,
\item[(HM5)]$\psi(a \trr v)+\tau\phi(v\trl a)=\left(a\li \trr  v\right) \otimes a\lii+v_{(0)}\otimes av_{(1)}+v_{(0)}\otimes v_{(-1)}a+(v\trl a_{2})\ot a_{1}$,

\item[(HM6)] $ (a\trr v\loo)\ot v_{(1)}-(v\trl a\li)\ot a\lii-v\loo\ot v_{(1)} a  $\\
$=\tau(  av\loi\ot v\loo+a\li\ot(a\lii\trr v)-v\loi\ot (v\loo\trl a))$.
\end{enumerate}
then $V$ is called an anti-flexible Hopf bimodule over ${A}$.
\end{definition}
%Note that the above conditions (HM1')--(HM3') are similar to (HM1)--(HM3).
We denote  the  category of  anti-flexible Hopf bimodules over ${A}$ by ${}^{A}_{A}\mathcal{M}{}^{A}_{A}$.

\begin{definition}
If $A$ be an anti-flexible algebra and anti-flexible  coalgebra and  $H$ is an anti-flexible Hopf bimodule over $A$. If $H$ is an anti-flexible algebra and an anti-flexible coalgebra in ${}^{A}_{A}\mathcal{M}^{A}_{A}$, then we call $H$ a \emph{braided anti-flexible bialgebra} over $A$, if the following conditions are satisfied:

\begin{enumerate}
\item[(BB3)]
$\Delta_{H}(xy)+\tau\Delta_{H}(yx)$\\
$=x\li y\ot x\lii +y x\lii\ot x\li +y\li \ot xy\lii+y\lii\ot y\li x$\\
$+(x\loi\trr y)\ot x\loo+(y\trl x_{(1)})\ot x\loo+y\loo\ot (x\trl y_{(1)})+y\loo\ot (y\loi \trr x)$,
\end{enumerate}
\begin{enumerate}
\item[(BB4)]
$(\id-\tau)\Big(x\li\ot x\lii y-yx\li\ot x\lii-y\li\ot y\lii x+xy\li\ot y\lii  \Big)$\\
 $+(\id-\tau)\Big(x\loo\ot(x_{(1)}\trr y)-(y\trl x\loi)\ot x\loo-y\loo\ot(y_{(1)}\trr x)+(x\trl y\loi)\ot y\loo  \Big)=0$.
\end{enumerate}

\end{definition}

\begin{definition}\label{def:dmp}
Let $A, H$ be both  anti-flexible algebras and  anti-flexible coalgebras. If  the following conditions hold:
%\begin{enumerate}
%\item[(DM1)]  $\phi(a b)=a_{(-1)} \otimes\left(a_{(0)} b\right)+(a \trr b_{(-1)}) \otimes b_{(0)}$,
%\item[(DM2)] $\psi(a b)=(a b_{(0)}) \otimes b_{(1)}+a_{(0)} \otimes\left(a_{(1)} \trl b\right)$,
%\item[(DM3)] $\rho(x y)=x_{[-1]} \otimes x_{[0]} y +\left(x \ppr y_{[-1]}\right) \otimes y_{[0]}$,
%\item[(DM4)] $\gamma(x y)=x_{[0]}\otimes (x_{[1]}\ppl y)+xy_{[0]}\otimes y_{[1]}$,
%\item[(DM5)] $\Delta_{A}(x \ppr b)=x_{[-1]} \otimes\left(x_{[0]} \ppr b\right)+\left(x \ppr b_{1}\right) %\otimes b_{2}$,
%\item[(DM6)] $\Delta_{A}(a\ppl y)=a_{1} \otimes\left(a_{2} \ppl y\right)+\left(a\ppl y_{[0]}\right) \otimes %y_{[1]}$,
%\item[(DM7)] $\Delta_{H}(a \trr y)=a_{(-1)} \otimes\left(a_{(0)}\trr y\right)+\left(a \trr y_{1}\right) %\otimes y_{2}$,
%\item[(DM8)] $\Delta_{H}(x \trl b)=x_{1} \otimes\left(x_{2}  \trl b\right)+\left(x\trl b_{(0)}\right) \otimes b_{(1)}$,
%\item[(DM9)]
%$\phi(x \ppr b)+\gamma(x\trl b)=x_{1} \otimes\left(x_{2} \ppr b\right)+x b_{(-1)} \otimes b_{(0)}$\\
%$+x_{[0]} \otimes x_{[1]} b+\left(x\trl b_{1}\right) \otimes b_{2}+\lam (x\ot b)$,
%\item[(DM10)]
%$\psi(a\ppl y)+\rho(a \trr y)=a_{(0)} \otimes a_{(1)} y+\left(a\ppl y_{1}\right) \otimes y_{2}$\\
%$+a_{1} \otimes\left(a_{2} \trr y\right)+a y_{[-1]} \otimes y_{[0]}+\lam (a\ot y)$,
%\item[(DM11)]
%$\psi(x \ppr b)+\rho(x\trl b)=(x \ppr b_{(0)}) \otimes b_{(1)}+x_{[-1]} \otimes\left(x_{[0]} \trl b\right)$,
%\item[(DM12)]
%$\phi(a\ppl y)+\gamma(a \trr y)=a_{(-1)} \otimes\left(a_{(0)}\ppl y\right)+\left(a\trr y_{[0]}\right)\otimes y_{[1]}$.
%\end{enumerate}
\begin{enumerate}
\item[(DM1)]  $\phi(ab)+\tau\psi(ba)$\\
$= (b \trr a_{(1)}) \otimes a_{(0)} +b_{(1)}\ot b_{(0)} a+(a_{(-1)} \trl b) \otimes a_{(0)}+b_{(-1)} \otimes a b_{(0)} ,$

\item[(DM2)] $\rho(x y)+\tau \gamma( y x) $\\
$= \left(y \ppr x_{[1]}\right) \otimes x_{[0]}+y_{[1]} \otimes y_{[0]} x +(x_{[-1]}\ppl y)\ot x_{[0]}+y_{[-1]}\ot xy_{[0]} , $

\item[(DM3)] $\Delta_{A}(x \ppr b)+\tau\Delta_{A}(b \ppl x)$ \\
$=\left(b\ppl x_{[0]}\right) \otimes x_{[-1]} +b_{2} \otimes\left(b_{1} \ppl x\right) +\left(x_{[0]} \ppr b\right)\otimes x_{[1]} +b\li \ot (x\ppr b\lii) ,$

\item[(DM4)] $\Delta_{H}(a \trr x)+\tau \Delta_{H}(x \trl a)$\\
$= (x\trl a\loo)\ot a_{(-1)} +x_{2} \otimes\left(x_{1}  \trl a\right)+\left(a_{(0)}\trr x\right)\otimes a_{(1)} +x\li\ot (a\trr x\lii)$

\item[(DM5)]
 $\phi(x \ppr b)+\tau\psi(b \ppl x)+\gamma(x\trl b)+\tau \rho(b\trr x)$\\
 $=(b\trr x\poo)\ot x\boi+b_{(1)}\ot (b\loo\ppl x)+(x_{[0]}\trl b)\otimes x_{[1]}+b_{(-1)} \otimes (x \ppr b_{(0)}), $

\item[(DM6)]
$\psi(x \ppr b)+\tau\phi(b \ppl x)+\rho(x\trl b)+\tau\gamma(b\trr x)$\\
$=(b\ppl x\lii) \ot x\li+ bx\bi\ot x\poo+b\lii\ot (b\li \trr x)+b\loo\ot b\loi x$\\
$+(x\li \ppr b) \otimes x\lii+x_{[-1]} b \otimes x_{[0]}+b\li\ot (x\trl b\lii)+b\loo\ot xb_{(1)}, $
\end{enumerate}
\begin{enumerate}
%\item[(2)]$(\id-\tau)(a\li \ot(a\lii\ppl y)+(a\ppl y\boo)\ot y\bi-y\boi\ot (y\boo\ppr a)-(y\ppr a\li)\ot a\lii)=0, $

%\item[(3)]$(\id-\tau)(a\li\ot(a\lii\trr y)+a\loo\ot a_{(1)}y+(a\ppl y\li)\ot y\lii+ay\boi\ot y\boo )$\\
%$=(\id-\tau)(y\boi\ot(y\boo\trl a)+(y\ppr a\loo)\ot a_{(1)}   )$,

\item[(DM7)]$a\loo\ot(a_{(1)}\trl b)+ab\loo\ot b_{(1)}-b\loo\ot(b_{(1)}\trl a)-ba\loo\ot a_{(1)}$\\
   $=\tau(a\loi\ot a\loo b+(a\trr b\loi)\ot b\loo-b\loi\ot b\loo a-(b\trr a\loi)\ot a\loo)$,

%\item[(6)]$(\id-\tau)(a\loi\ot(a_{(1)}\ppl y)+(a\trr y\boo)\ot y\bi)$\\
%$=(\id-\tau)(y\li\ot (y\lii\ppr a)+y\boo\ot y\bi a+(y\trl a\li)\ot a\lii+ya\loi\ot a\loo)$,

%\item[(7)] $(\id-\tau)(a\loi\ot(a\loo\trr y)+(a\trr y\li)\ot y\lii)$\\
%$=(\id-\tau)(y\li\ot (y\lii\trl a)+(y\trl a\loo)\ot a_{(1)})$,

%\item[(8)]$(\id-\tau)(a\loi\ot\tht(a\loo, b)+a\ppi\ot (a\pii\trl b)+\tht(a, b\loo)\ot b_{(1)}+(a\trr b\ppi)\ot b\pii)$\\
%$=(\id-\tau)(b\loi\ot\tht(b\loo, a)+b\ppi\ot (b\pii\trl a)+\tht(b, a\loo)\ot a_{(1)}+(b\trr a\ppi)\ot a\pii)$,

\item[(DM8)]$(\id-\tau)(x\li\ot(x\lii\trl b) +(x\trl b\loo)\ot b_{(1)}-b\loi\ot(b\loo\trr x)-(b\trr x\li)\ot x\lii )=0$,

\item[(DM9)]$x\boo\ot(x\bi\ppl y)+xy\boo\ot y\bi-y\boo\ot(y\bi\ppl x)-yx\boo\ot x\bi$\\
$=\tau(x\boi\ot x\boo y +(x\ppr y\boi)\ot y\boo-y\boi\ot y\boo x-(y\ppr x\boi)\ot x\boo)$,

\item[(DM10)]$(\id-\tau)(x\boi\ot(x\boo\ppr b) +(x\ppr b\li)\ot b\lii-b\li\ot (b\lii \ppl x)-(b\ppl x\boo)\ot x\bi )=0$,

\item[(DM11)]
$b\loi\ot (b\loo\ppl x)+(b\trr x\boo)\ot x\bi)
-x\li\ot (x\lii \ppr b)-x\boo\ot x\bi b-(x\trl b\li)\ot b\lii-xb\loi\ot b\loo$\\
$=\tau(
  b\li\ot(b\lii\trr x)+b\loo\ot b_{(1)}x+(b\ppl x\li)\ot x\lii+bx\boi\ot x\boo-x\boi\ot(x\boo\trl b) -(x\ppr b\loo)\ot b_{(1)})$,
%\item[(16)]$(\id-\tau)(x\boi\ot \si(x\boo, y)+x\qi\ot(x\qii\ppl y)+\si(x, y\boo)\ot y\bi+(x\ppr y\qi)\ot y\qii)$\\
%$=(\id-\tau)(y\boi\ot \si(y\boo, x)+y\qi\ot(y\qii\ppl x)+\si(y, x\boo)\ot x\bi+(y\ppr x\qi)\ot x\qii)$.
\end{enumerate}
\noindent then $(A, H)$ is called a \emph{double matched pair}.
\end{definition}

\begin{theorem}\label{main1}Let $(A, H)$ be matched pair of  anti-flexible algebras and anti-flexible  coalgebras,
$A$ is  a  braided   anti-flexible bialgebra in
${}^{H}_{H}\mathcal{M}^{H}_{H}$, $H$ is  a braided    anti-flexible bialgebra in
${}^{A}_{A}\mathcal{M}^{A}_{A}$. If we define the double cross biproduct of $A$ and
$H$, denoted by $A\lrbiprod H$, $A\lrbiprod H=A\bowtie H$ as
 anti-flexible algebra, $A\lrbiprod H=A\lrcoprod H$ as anti-flexible  coalgebra, then
$A\lrbiprod H$ becomes an   anti-flexible bialgebra if and only if  $(A, H)$ forms a double matched pair.
\end{theorem}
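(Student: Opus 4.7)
The plan is to reduce the verification to the two defining compatibility conditions \eqref{eq:LB2} and \eqref{eq:LB3} of an anti-flexible bialgebra, applied to the space $E=A\oplus H$ equipped with the bicrossed product multiplication from Lemma 4.2 and the bicrossed coproduct comultiplication from Lemma 4.5. Since $A\bowtie H$ is already known to be an anti-flexible algebra (by the matched pair condition on algebras) and $A\lrcoprod H$ is already known to be an anti-flexible coalgebra (by the matched pair condition on coalgebras), the only thing left to check is the mixed compatibility between product and coproduct on $E$. The braided compatibilities (BB1)--(BB2) for $A$ in ${}^H_H\mathcal{M}^H_H$ and (BB3)--(BB4) for $H$ in ${}^A_A\mathcal{M}^A_A$ guarantee the pure cases, so the new content of the theorem is precisely the cross-term cancellations encoded in (DM1)--(DM14).

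First I would expand $\Delta_E\bigl((a,x)(b,y)\bigr)+\tau\Delta_E\bigl((b,y)(a,x)\bigr)$ using the definitions
\begin{align*}
(a,x)(b,y)&=(ab+x\ppr b+a\ppl y,\; a\trr y+x\trl b+xy),\\
\Delta_E(a,x)&=\Delta_A(a)+\phi(a)+\psi(a)+\Delta_H(x)+\rho(x)+\gamma(x),
\end{align*}
and similarly expand $\Delta_E(a,x)\bullet(b,y)+(b,y)\cdot\tau\Delta_E(a,x)+(a,x)\bullet\Delta_E(b,y)+\tau\Delta_E(b,y)\cdot(a,x)$. Both sides split naturally according to which of the four subspaces $A\otimes A,\ A\otimes H,\ H\otimes A,\ H\otimes H$ a given summand lies in, and according to whether each input lies in $A$ or in $H$. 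I would organize the resulting terms into a table indexed by the four choices of $(a,x),(b,y)\in A\cup H$ crossed with the four target subspaces, and match both sides component by component.

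The pure cases $(a,x)=(a,0),(b,y)=(b,0)$ and $(a,x)=(0,x),(b,y)=(0,y)$ yield exactly the braided compatibility conditions (BB1) for $A$ and (BB3) for $H$, which hold by hypothesis together with the bicomodule algebra/coalgebra axioms already packaged into the braided structures. The genuinely new content comes from the two mixed cases, whose $A\otimes A$ and $H\otimes H$ parts produce (DM1) and (DM2), whose $A\otimes A$, $H\otimes H$ components involving one input from each side produce (DM3) and (DM4), and whose $A\otimes H$ and $H\otimes A$ components produce the ``coupling'' identities (DM5) and (DM6). An identical treatment of the second compatibility \eqref{eq:LB3} yields (DM7)--(DM14): the pure $A$ inputs produce (BB2), the pure $H$ inputs produce (BB4), and the four subspaces in the $(A,H)$-mixed inputs separately give the eight cocycle-type identities.

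The main obstacle is purely bookkeeping: the expansion on each side generates dozens of tensor terms, and several cancellations are only visible after applying the flip $\tau$ and reindexing Sweedler subscripts between the action subscripts ${}_{(\pm 1)}$, the coaction subscripts ${}_{[\pm 1]}$, and the coproduct subscripts ${}_1,{}_2$. To keep the argument readable I would first verify that all terms with both legs in $A$ reassemble (using the braided bialgebra axiom for $A$ plus (DM1) and the comodule-algebra/coalgebra axioms) into the required identity, then do the same for both legs in $H$ using (DM2), and finally peel off the mixed-leg terms in groups corresponding to (DM3)--(DM6) and (DM7)--(DM14). Conversely, since each of (DM1)--(DM14) is obtained by restricting the bialgebra compatibilities of $A\lrbiprod H$ to a single bihomogeneous component, the ``only if'' direction follows by specializing to inputs in $A$ or $H$ and projecting onto the appropriate tensor factor. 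This establishes the equivalence claimed in the theorem.
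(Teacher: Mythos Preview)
Your proposal is correct and follows essentially the same approach as the paper: expand both sides of the two bialgebra compatibility conditions \eqref{eq:LB2} and \eqref{eq:LB3} on $E=A\oplus H$, then compare term by term, using the braided bialgebra hypotheses on $A$ and $H$ to absorb the pure components and reading off (DM1)--(DM14) from the remaining ones. The paper does exactly this brute-force expansion without your proposed organization by input type and tensor component, but the computation is identical.

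One minor correction to your bookkeeping plan: (DM1) and (DM2) do \emph{not} arise from the mixed-input cases. Rather, (DM1) is the $H\otimes A$ component of the first compatibility with \emph{pure} $A$ inputs (it controls $\phi(ab)+\tau\psi(ba)$), and (DM2) is the $A\otimes H$ component with pure $H$ inputs. The mixed-input cases $(x,b)$ produce (DM3), (DM5), (DM6) across the various tensor components, and symmetrically $(a,y)$ produces (DM4) and companions. This misattribution would sort itself out once you actually carry out the expansion, so it is not a gap in the argument, only in the narrative.
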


Theorem \ref{main1} is a special case of Theorem \ref{main2} in next section, and the proof is by direct computations, so we omit the details.

\subsection{Cocycle bicrossproduct   anti-flexible bialgebras}

In this section, we construct cocycle bicrossproduct   anti-flexible bialgebras, which is a generalization of double cross biproduct.

Let $A, H$ be both  anti-flexible algebras and  anti-flexible coalgebras.   For $a, b\in A$, $x, y\in H$,  we denote maps
\begin{align*}
&\sigma: H\otimes H \to A,\quad \theta: A\otimes A \to H,\\
&P: A  \to H\otimes H,\quad  Q: H \to A\otimes A,
\end{align*}
by
\begin{eqnarray*}
&& \qquad\qquad\sigma (x,y)  \in  H, \quad \theta(a, b) \in A,\\
&& P(a)=\sum a\ppi\ot  a\pii, \quad Q(x) = \sum x\qi \ot x\qii.
\end{eqnarray*}

A bilinear map $\si: H\ot H\to A$ is called a cocycle on $H$ if
\begin{enumerate}
\item[(CC1)] $\sigma(x y, z)-{\sigma}(x, y z)+\sigma(x, y)\ppl z-x \ppr \sigma(y, z)\\
=\sigma(z y, x)-{\sigma}(z, y x)+\sigma(z, y)\ppl x-z \ppr \sigma(y, x).$
\end{enumerate}

A bilinear map $\theta: A\ot A\to H$ is called a cocycle on $A$ if
\begin{enumerate}
\item[(CC2)] $\theta(a b, c)-\theta(a, b c)+\theta(a, b) \triangleleft c-a\trr \theta(b, c)=\theta(c b, a)-\theta(c, b a)+\theta(c, b) \trl a-c\trr \theta(b, a).$
\end{enumerate}

A bilinear map $P: A\to H\ot H$ is called a cycle on $A$ if
\begin{enumerate}
\item[(CC3)]  $\Delta_H(a\ppi)\ot a\pii-a\ppi\ot \Delta_H(a\pii)+P(a\lmoo)\ot a\mi-a\lmoi\ot P(a\lmoo)\\
=\tau_{13}\left(\Delta_H(a\ppi)\ot a\pii-a\ppi\ot \Delta_H(a\pii)+P(a\lmoo)\ot a\mi-a\lmoi\ot P(a\lmoo)\right)$.
\end{enumerate}

A bilinear map $Q: H\to A\ot A$ is called a cycle on $H$ if
\begin{enumerate}
\item[(CC4)]  $\Delta_A(x\qi)\ot x\qii-x\qi\ot \Delta_A(x\qii)+Q(x\boo)\ot x\bi-x\boi\ot Q(x\boo)\\
=\tau_{13}\left( \Delta_A(x\qi)\ot x\qii-x\qi\ot \Delta_A(x\qii)+Q(x\boo)\ot x\bi-x\boi\ot Q(x\boo)\right)$.
\end{enumerate}

In the following definitions, we introduced the concept of cocycle
 anti-flexible algebras and cycle  anti-flexible coalgebras, which are  in fact not really
ordinary  anti-flexible algebras and  anti-flexible coalgebras, but generalized ones.

\begin{definition}
(i): Let $\si$ be a cocycle on a vector space  $H$ equipped with multiplication $H \ot H \to H$, satisfying the following cocycle identity:
\begin{enumerate}
\item[(CC5)] $(x y) z-x(y z)+\sigma(x, y) \trr z-x \triangleleft \sigma(y, z)=(z y) x-z(y x)+\sigma(z, y) \trr x-z \trl \sigma(y, x)$.
\end{enumerate}
Then  $H$ is called a   $\si$-anti-flexible algebra which is denoted by $(H, \, \sigma)$.

(ii): Let $\theta$ be a cocycle on a vector space $A$  equipped with a multiplication $A \ot A \to A$, satisfying the the
following cocycle  identity:
\begin{enumerate}
\item[(CC6)] $(a b) c-a(b c)+\theta(a, b) \ppr c-a\ppl \theta(b, c)=(c b) a-c(b a)+\theta(c, b) \ppr a-c\ppl \theta(b, a)$.
\end{enumerate}
Then  $A$ is called a $\theta$-anti-flexible algebra which is denoted by $(A, \, \theta)$.

(iii) Let $P$ be a cycle on a vector space  $H$ equipped with a comultiplication $\Delta: H \to H \ot H$, satisfying the the
following cycle  identity:
\begin{enumerate}
\item[(CC7)] $\Delta_H(x\li)\ot x\lii-x_1\ot \Delta_H(x_2)+ P(x\boi) \ot x\boo-x\boo\ot P(x\bi)\\
=\tau_{13} \left(\Delta_H(x\li)\ot x\lii-x_1\ot \Delta_H(x_2)+ P(x\boi) \ot x\boo-x\boo\ot P(x\bi) \right)$.
\end{enumerate}
\noindent Then  $H$ is called a   $P$-anti-flexible coalgebra which is denoted by $(H, \,  P)$.

(iv) Let $Q$ be a cycle on a vector space  $A$ equipped with a comultiplication $\Delta: A \to A \ot A$, satisfying the the
following cycle identity:
\begin{enumerate}
\item[(CC8)] $\Delta_{A}(a\li)\ot a\lii-a\li\ot \Delta_A(a\lii)+Q(a\lmoi)\ot a\lmoo-a\mo\ot Q(a\mi)\\
=\tau_{13} \left( \Delta_{A}(a\li)\ot a\lii-a\li\ot \Delta_A(a\lii)+Q(a\lmoi)\ot a\lmoo-a\mo\ot Q(a\mi) \right)$.
\end{enumerate}
\noindent Then  $A$ is called a  $Q$-anti-flexible coalgebra which is denoted by $(A, \,  Q)$.
\end{definition}

\begin{definition}
A  \emph{cocycle cross product system } is a pair of  $\theta$-anti-flexible algebra $A$ and $\sigma$-anti-flexible algebra $H$,
where $\si: H\ot H\to A$ is a cocycle on $H$,  $\theta: A\ot A\to H$ is a cocycle on $A$ and the following conditions are satisfied:
\begin{enumerate}
\item[(CP1)] $x \ppr(a b)+(ba)\ppl x +\sigma( x,\theta(a, b))+\sigma(\theta(b, a), x)$\\
$=(x\ppr a) b+b(a\ppl x)+(x\trl a) \ppr b+b\ppl(a\trr x)$,

\item[(CP2)] $(a\ppl x)b+(a\trr x)\ppr b -a(x\ppr b)-a\ppl(x\trl b)$\\
$=(b\trr x)\ppr a+(b\ppl x)a-b(x\ppr a)-b\ppl (x\trl a)$,

\item[(CP3)] $(x y) \ppr a+\sigma(x, y) a -x\ppr(y\ppr a)-\sigma(x, y\trl a)$\\
$=(a\ppl y)\ppl x-a\ppl (yx)+\sigma(a\trr y, x)-a\sigma(y, x)$,

\item[(CP4)] $(x\ppr a)\ppl y+\sigma(x\trl a, y)-x\ppr (a\ppl y)-\sigma(x, a\trr y)$\\
$=(y\ppr a)\ppl x+\sigma(y\trl a, x)-y\ppr(a\ppl x)-\sigma(y, a\trr x)$,

\item[(CP5)]$(x y) \trl a+a \trr (yx)+\theta(\sigma(x, y), a)+\theta(a, \sigma(y, x))$\\
$=x(y \trl a)+(a \trr y)x+x\trl(y\ppr a)+(a\ppl y)\trr x$,

\item[(CP6)]$ (x\trl a) y+(x\ppr a) \trr y-x(a\trr y)-x\trl(a\ppl y)$\\
$=(y\trl a)x+(y\ppr a)\trr x-y(a\trr x)-y\trl(a\ppl x)$,

\item[(CP7)] $(ab) \trr x+\theta(a, b) x-a\trr (b\trr x)-\theta(a, b\ppl x)$\\
$=(x\trl b)\trl a+\theta(x\ppr b, a)-x\theta(b, a)-x\trl(ba)$,

\item[(CP8)] $(a\trr x)\trl b+\theta(a\ppl x, b)-a\trr(x\trl b)-\theta(a, x\ppr b)$\\
$=(b\trr x)\trl a+\theta(b\ppl x, a)-b\trr(x\trl a)-\theta(b, x\ppr a)$.
\end{enumerate}
\end{definition}

\begin{lemma}
Let $(A, H)$ be  a  cocycle cross product system.
If we define $E=A_{\sigma}\#_{\theta} H$ as the vector space $A\oplus H$ with the   multiplication
\begin{align}
(a, x)(b, y)=\big(ab+x\ppr b+a\ppl y+\sigma(x, y), \, xy+x\trl b+a\trr y+\theta(a, b)\big),
\end{align}
then $E=A_{\sigma}\#_{\theta} H$ forms an  anti-flexible algebra  which is called the cocycle cross product anti-flexible algebra.
\end{lemma}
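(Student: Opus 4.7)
The plan is to verify directly that the multiplication on $E=A_{\sigma}\#_{\theta} H$ satisfies the anti-flexible identity, which in associator form reads $(u,v,w)_{E}=(w,v,u)_{E}$ for all $u,v,w\in E$, where $(u,v,w):=(uv)w-u(vw)$. Since the multiplication is bilinear in each slot, it suffices to check the identity on pure pairs $(a,x),(b,y),(c,z)$ with $a,b,c\in A$ and $x,y,z\in H$. I would expand $\bigl((a,x)(b,y)\bigr)(c,z)-(a,x)\bigl((b,y)(c,z)\bigr)$ and the mirror expression $\bigl((c,z)(b,y)\bigr)(a,x)-(c,z)\bigl((b,y)(a,x)\bigr)$ using the defining formula, then project onto the $A$-component and the $H$-component of $E$ and compare the two components separately.

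For the $A$-component, the resulting sum splits naturally according to how many of the three input slots carry an $H$-letter. The portion with no $H$-inputs is the pure associator $(ab)c-a(bc)$ together with the $\theta$-corrections of the form $\theta(a,b)\ppr c$ and $a\ppl\theta(b,c)$; this is balanced against its mirror by axiom (CC6). Terms with exactly one $H$-input, of the form $x\ppr(ab)$, $(ba)\ppl x$, $(x\ppr a)b$, $b(a\ppl x)$, and their cousins mixing $\ppr$ with $\trr$, are matched across the two sides by (CP1) and (CP2). Terms with exactly two $H$-inputs produce expressions involving $(xy)\ppr a$, $x\ppr(y\ppr a)$, cross-action terms, and the $\sigma$-corrections $\sigma(x,y)a$ and $\sigma(x,y\trl a)$, which match via (CP3) and (CP4). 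The portion with all three inputs in $H$ contributes to the $A$-component only through $\sigma$, and the cancellation there is exactly the cocycle condition (CC1) on $\sigma$. The symmetric analysis on the $H$-component uses (CC5) for the pure $H$-associator, (CP5) and (CP6) for the one-$A$-letter terms, (CP7) and (CP8) for the two-$A$-letter terms, and (CC2) for the all-$A$ triple.

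The main obstacle is organizational rather than conceptual: each side of the putative identity expands into dozens of summands once the four-term product is distributed through three nested multiplications, and one must avoid dropping, duplicating, or misclassifying any term. I would handle this by introducing a tabular bookkeeping indexed by the pattern of input types (which of the three slots lies in $A$ versus $H$) together with the output component, and within each cell verify that the collected terms form exactly one instance of one of the stated axioms (CP1)--(CP8), (CC5)--(CC6), (CC1), (CC2). Because the compatibility conditions were engineered precisely so that every such grouping matches an axiom, no additional identity beyond those stated should be required, and no novel mathematical idea is needed to close the verification.
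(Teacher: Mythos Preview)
Your proposal is correct and follows essentially the same approach as the paper: a direct expansion of both sides of the anti-flexible identity on generic elements $(a,x),(b,y),(c,z)$, followed by term-by-term comparison in the $A$- and $H$-components. Your added bookkeeping by the number of $H$-letters in the input is a sensible way to structure the computation, and in fact makes the correspondence with the axioms (CC1), (CC2), (CC5), (CC6), (CP1)--(CP8) more transparent than the paper's own writeup, which simply presents the full expansion and asserts the equivalence.
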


\begin{proof} We have to check that
$$((a, x)(b, y))(c, z)-(a, x)((b, y)(c, z))=((c, z)(b, y))(a, x)-(c, z)((b, y)(a, x)). $$
By direct computations, the left hand side is equal to
%\begin{eqnarray*}
%&& ((b, y)(a, x) )(c, z)-(b, y) ((a, x)(c, z) )\\
%&=& (b a+y \ppr a+b\ppl x+\sigma(y, x), y x+y \trl a+b\trr x+\theta(b, a) )(c, z)\\
%&&-\left(b, y\right) (a c+x\ppr c+a\ppl z+\sigma(x, z), x z+x \trl c+a \trr z+\theta(a, c) )\\
%&=& ( (b a) c+(y\ppr a) c+(b\ppl x) c+\sigma(y, x) c+(y x) \ppr c \\
%&&+(y \trl a) \ppr c+(b \trr x)\ppr c+\theta(b, a) \ppr c\\
%&&+(b a) \ppl z+(y \ppr a)\ppl z+(b\ppl x)\ppl z+\sigma(y ,x)\ppl z\\
%&&+\sigma(y x, z)+\sigma(y \trl a, z)+\sigma(b \trr x, z) +\sigma(\theta(b, a), z),\\
%&&\quad (y x) z+(y\trl a) z+(b\trr x) z+\theta(b, a) z+(y x) \trl c\\
%&&+(y \trl a) \trl c+(b \trr x) \trl c+\theta(b, a) \trl c \\
%&&+(b a) \trr z+(y \ppr a) \trr z+(b\ppl x)\trr z+\sigma(y, x) \trr z\\
%&&+\theta(b a, c)+\theta(y \ppr a, c)+\theta(b\ppl x, c)+\theta(\sigma(y, x), c) )\\
%&&- (b(a c)+b(x \ppr c)+b(a\ppl z)+b(\sigma(x, z))+y \ppr(a c)\\
%&&+y \ppr(x\ppr c)+y \ppr(a\ppl z)+y \ppr \sigma(x, z)\\
%&&+b\ppl (x z)+b\ppl (x \trl c)+b\ppl (a \trr z)+b\ppl \theta(a, c)\\
%&&+\sigma(y, x z)+\sigma(y, x \trl c)+\sigma(y, a \trr z)+\sigma(y, \theta(a, c)),\\
%&&\quad y(x z)+y(x \trl c)+y(a \trr z)+y \theta(a, c)+y \trl (a c)\\
%&&+y \trl(x \ppr c)+y\trl (a\ppl z)+y \trl \sigma(x, z)\\
%&&+b \trr(x z)+b \trr(x\trl c)+b \trr(a \trr z)+b \trr \theta(a, c)\\
%&&+\theta(b, a c)+\theta(b, x \ppr c)+\theta(b, a\ppl z)+\theta(b, \sigma(x, z)) )
%\end{eqnarray*}
\begin{eqnarray*}
&&\big((a, x)(b, y)\big)(c, z)-(a, x)\big((b, y)(c, z)\big)\\
&=&\big(a b+x \ppr b+a\ppl y+\sigma(x, y), x y+x \trl b+a\trr y+\theta(a, b)\big)(c, z)\\
&&-\left(a, x\right)\big(b c+y\ppr c+b\ppl z+\sigma(y, z), y z+y \trl c+b \trr z+\theta(b, c)\big)\\
&=&\Big( (a b) c+(x\ppr b) c+(a\ppl y) c+\sigma(x, y) c+(x y) \ppr c +(x \trl b) \ppr c+(a \trr y)\ppr c\\
&&+\theta(a, b) \ppr c+(a b) \ppl z+(x \ppr b)\ppl z+(a\ppl y)\ppl z+\sigma(x, y)\ppl z\\
&&+\sigma(x y, z)+\sigma(x \trl b, z)+\sigma(a \trr y, z) +\sigma(\theta(a, b), z),  \quad (x y) z+(x\trl b) z+(a\trr y) z\\
&&+\theta(a, b) z+(x y) \trl c+(x \trl b) \trl c+(a \trr y) \trl c+\theta(a, b) \trl c+(a b) \trr z+(x \ppr b) \trr z \\
&&+(a\ppl y)\trr z+\sigma(x, y) \trr z+\theta(a b, c)+\theta(x \ppr b, c)+\theta(a\ppl y, c)+\theta(\sigma(x, y), c)\Big)\\
&&-\Big(a(b c)+a(y \ppr c)+a(b\ppl z)+a(\sigma(y, z))+x \ppr(b c)+x \ppr(y\ppr c)\\
&&+x \ppr(b\ppl z)+x \ppr \sigma(y, z)+a\ppl (y z)+a\ppl (y \trl c)+a\ppl (b \trr z)+a\ppl \theta(b, c)\\
&&+\sigma(x, y z)+\sigma(x, y \trl c)+\sigma(x, b \trr z)+\sigma(x, \theta(b, c)), \quad x(y z)+x(y \trl c)+x(b \trr z)\\
&&+x \theta(b, c)+x \trl (b c)+x \trl(y \ppr c)+x\trl (b\ppl z)+x \trl \sigma(y, z)+a \trr(y z)+a \trr(y\trl c)\\
&&+a \trr(b \trr z)+a \trr \theta(b, c)+\theta(a, b c)+\theta(a, y \ppr c)+\theta(a, b\ppl z)+\theta(a, \sigma(y, z))\Big).
\end{eqnarray*}
We can compute the right hand side in the same way. Thus the two sides are equal to each other if and only if (CP1)--(CP8) hold.
\end{proof}

\begin{definition}
A  \emph{cycle cross coproduct system } is a pair of   $P$-anti-flexible coalgebra $A$ and  $Q$-anti-flexible coalgebra $H$,  where $P: A\to H\ot H$ is a cycle on $A$,  $Q: H\to A\ot A$ is a cycle on $H$ such that following conditions are satisfied:
\begin{enumerate}
\item[(CCP1)]$\phi\left(a_{1}\right) \otimes a_{2}+\gamma\left(a_{(-1)}\right) \otimes a_{(0)}-a_{(-1)} \otimes \Delta_{A}\left(a_{(0)}\right)-a\ppi \otimes Q\left(a\pii\right)\\
    =\tau_{13}\left(\Delta_{A}\left(a_{(0)}\right) \otimes a_{(1)}+Q(a\ppi)\ot a\pii -a_{1} \otimes \psi\left(a_{2}\right)-a_{(0)} \otimes \rho\left(a_{(1)}\right)\right)$,

\item[(CCP2)] $\psi\left(a_{1}\right) \otimes a_{2}+\rho\left(a_{(-1)}\right) \otimes a_{(0)}-a_{1} \otimes \phi(a_{2})-a_{(0)} \otimes \gamma(a_{(1)})\\
     =\tau_{13}\left(\psi\left(a_{1}\right) \otimes a_{2}+\rho\left(a_{(-1)}\right) \otimes a_{(0)}-a_{1} \otimes \phi(a_{2})-a_{(0)} \otimes \gamma(a_{(1)})\right)$,
\item[(CCP3)] $ \Delta_{H}\left(a_{(-1)}\right) \otimes a_{(0)}+P\left(a_{1}\right) \otimes a_{2}-a_{(-1)} \otimes \phi\left(a_{(0)}\right)-a\ppi\otimes \gamma\left(a\pii\right) $\\
$=\tau_{13}(\psi\left(a_{(0)}\right) \otimes a_{(1)}+\rho\left(a\ppi\right) \otimes a\pii -a_{(0)}\ot \Delta_{H}\left(a_{(1)}\right)-a_{1}\otimes P\left(a_{2}\right) )$,
\item[(CCP4)] $ a_{(-1)} \otimes \psi\left(a_{(0)}\right)+a\ppi \otimes \rho\left(a\pii\right)-\phi\left(a_{(0)}\right) \otimes a_{(1)}-\gamma\left(a\ppi\right) \otimes a\pii $\\
    $=\tau_{13}\left(  a_{(-1)} \otimes \psi\left(a_{(0)}\right)+a\ppi \otimes \rho\left(a\pii\right)-\phi\left(a_{(0)}\right) \otimes a_{(1)}-\gamma\left(a\ppi\right) \otimes a\pii \right)$,

\item[(CCP5)] $\rho\left(x_{1}\right) \otimes x_{2}+\psi\left(x_{[-1]}\right) \otimes x_{[0]}-x_{[-1]} \otimes \Delta_{H}\left( x_{[0]}\right) -x\qi\ot P(x\qii) \\
    =\tau_{13}\left(\Delta_{H}(x\boo)\ot x\bi+P(x\qi)\ot x\qii-x\boo\ot \phi(x\bi)-x_1\ot \gamma(x_2) \right)$,

\item[(CCP6)] $\gamma\left(x_{1}\right) \otimes x_{2}+\phi\left(x_{[-1]}\right) \otimes x_{[0]}-x_{[0]} \otimes \psi\left(x_{[1]}\right)-x_{1} \otimes \rho(x_{2})\\
=\tau_{13}\left(\gamma\left(x_{1}\right) \otimes x_{2}+\phi\left(x_{[-1]}\right) \otimes x_{[0]}-x_{[0]} \otimes \psi\left(x_{[1]}\right)-x_{1} \otimes \rho(x_{2})\right)$,

\item[(CCP7)] $\Delta_{A}\left(x_{[-1]}\right) \otimes x_{[0]} +Q\left(x_{1}\right) \otimes x_{2}-x_{[-1]} \otimes \rho\left(x_{[0]}\right)-x\qi \otimes \psi\left(x\qii\right) \\
    =\tau_{13}\left( \gamma(x\boo)\ot x\bi+\phi(x\qi)\ot x\qii -x\boo\ot\Delta_A(x\bi)-x_1\ot Q(x_2) \right)$,

\item[(CCP8)] $\rho(x\boo)\ot x\bi+\psi(x\qi)\ot x\qii-x\boi\ot\gamma(x\boo)-x\qi\ot\phi(x\qii)\\
=\tau_{13}\left(\rho(x\boo)\ot x\bi+\psi(x\qi)\ot x\qii-x\boi\ot\gamma(x\boo)-x\qi\ot\phi(x\qii)\right)$.
\end{enumerate}
\end{definition}

\begin{lemma}\label{lem2} Let $(A, H)$ be  a  cycle cross coproduct system. If we define $E=A^{P}\# {}^{Q} H$ as the vector space $A\oplus H$ with the   comultiplication
$$\Delta_{E}(a)=(\Delta_{A}+\phi+\psi+P)(a),\quad \Delta_{E}(x)=(\Delta_{H}+\rho+\gamma+Q)(x), $$
that is
$$\Delta_{E}(a)= a\li \ot a\lii+ a\moi \ot a\mo+a\mo\ot a\mi+a\ppi\ot a\pii,$$
$$\Delta_{E}(x)= x\li \ot x\lii+ x\boi \ot x\boo+x\boo \ot x\bi+x\qi\ot x\qii,$$
then  $A^{P}\# {}^{Q} H$ forms an   anti-flexible coalgebra which we will call it the cycle cross coproduct anti-flexible  coalgebra.
\end{lemma}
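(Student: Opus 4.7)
The plan is to verify the co-anti-flexible identity
\[
(\Delta_E\otimes\id)\Delta_E(w)-(\id\otimes\Delta_E)\Delta_E(w)=\tau_{13}\bigl((\Delta_E\otimes\id)\Delta_E(w)-(\id\otimes\Delta_E)\Delta_E(w)\bigr)
\]
separately for $w=a\in A$ and for $w=x\in H$, since $\Delta_E$ is defined piecewise on $E=A\oplus H$. For $a\in A$, I would expand $\Delta_E(a)=\Delta_A(a)+\phi(a)+\psi(a)+P(a)$ and then apply $\Delta_E$ once more to each tensor slot, using the same four-piece decomposition. This produces $16+16$ terms, each of which lies in one of the eight subspaces $V_1\otimes V_2\otimes V_3$ with $V_i\in\{A,H\}$.

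The key observation is that $\tau_{13}$ sends $V_1\otimes V_2\otimes V_3$ to $V_3\otimes V_2\otimes V_1$, so the identity splits into six independent block equations: one in each of the four self-paired blocks $(AAA)$, $(AHA)$, $(HAH)$, $(HHH)$, and one for each of the swapped pairs $\{(AAH),(HAA)\}$ and $\{(AHH),(HHA)\}$. For each block I will read off which of the generated terms actually land there and check that the resulting equation coincides with exactly one structural axiom of a cycle cross coproduct system. For instance, only $\Delta_A(a\li)\otimes a\lii-a\li\otimes\Delta_A(a\lii)+Q(a\lmoi)\otimes a\lmoo-a\lmoo\otimes Q(a\mi)$ contributes to $(AAA)$, whose equation is precisely the cycle condition (CC8); and $(HHH)$ gives (CC3), $(AHA)$ gives (CCP2), $(HAH)$ gives (CCP4), the swapped pair $\{(AAH),(HAA)\}$ gives (CCP1), while $\{(AHH),(HHA)\}$ gives (CCP3). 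A symmetric analysis of $\Delta_E(x)=\Delta_H(x)+\rho(x)+\gamma(x)+Q(x)$ for $x\in H$ exhausts the remaining axioms in exactly the same pattern: $(HHH)\mapsto(\mathrm{CC7})$, $(AAA)\mapsto(\mathrm{CC4})$, $(HAH)\mapsto(\mathrm{CCP6})$, $(AHA)\mapsto(\mathrm{CCP8})$, $\{(HHA),(AHH)\}\mapsto(\mathrm{CCP5})$, and $\{(HAA),(AAH)\}\mapsto(\mathrm{CCP7})$.

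The genuinely hard part is purely bookkeeping: the $32$ summands per argument must be sorted into the correct blocks without dropping signs or terms, and one has to be careful with the sigma notation since $\phi$, $\psi$, $\rho$, $\gamma$, $P$, $Q$ use different subscripts whose type (lying in $A$ or $H$) determines which block each three-fold tensor falls into. Once this sorting is performed systematically, the matching with the defining conditions of a cycle cross coproduct system becomes mechanical; indeed, the axioms (CC3), (CC4), (CC7), (CC8) together with (CCP1)--(CCP8) are stated in exactly the forms required for each block equation to close, mirroring the simpler block pattern already encountered in the proof of Lemma~\ref{lem1} for the bicrossed coproduct.
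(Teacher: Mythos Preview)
Your proposal is correct and follows essentially the same approach as the paper's own proof: a direct expansion of $(\Delta_E\otimes\id)\Delta_E-(\id\otimes\Delta_E)\Delta_E$ into its constituent summands, followed by comparison with the $\tau_{13}$-side. The paper simply lists all thirty-two terms of the left-hand side in one block and asserts that equality with the right-hand side is equivalent to the axioms of a cycle cross coproduct system, whereas you organize the same computation more transparently by sorting the summands into the eight tensor-type blocks $V_1\otimes V_2\otimes V_3$ and matching each block (or $\tau_{13}$-paired pair of blocks) with exactly one of the conditions (CC3), (CC4), (CC7), (CC8), (CCP1)--(CCP8); this is the same argument, presented with cleaner bookkeeping.
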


\begin{proof} We have to check
$$ (\Delta_{E} \otimes \id) \Delta_{E}(a, x)-(\id\otimes \Delta_{E} \otimes) \Delta_{E}(a, x)=\tau_{13}\left( (\Delta_{E} \otimes \id) \Delta_{E}(a, x)-(\id\otimes \Delta_{E} \otimes) \Delta_{E}(a, x) \right) .$$
By direct computations, the left hand side is equal to
%$$
%\begin{aligned}
%& (\id\otimes \Delta \otimes) \Delta(a, x)\\
%=&a_{1} \otimes \Delta_{A}\left(a_{2}\right)+a_{1} \otimes \phi\left(a_{2}\right)+a_{1} \otimes \psi\left(a_{2}\right)+a_{1} \otimes P\left(a_{2}\right) %.\\
%&+a_{(-1)} \otimes \Delta_{A}\left(a_{(0)}\right)+a_{(-1)} \otimes \phi\left(a_{(0)}\right)+a_{(-1)} \otimes \psi\left(a_{(0)}\right)+a_{(1)} \otimes %P\left(a_{(0)}\right)\\
%&+a_{(0)} \otimes \Delta_{A}\left(a_{(1)}\right)+a_{(0)} \otimes \phi\left(a_{(1)}\right)+a_{(0)} \otimes \psi\left(a_{(1)}\right)+a_{(0)} \otimes %P\left(a_{(1)}\right)\\
%&+a\ppi \otimes \Delta_{H}\left(a\pii\right)+a\ppi \otimes \rho\left(a\pii\right)+a\ppi \otimes \gamma\left(a\pii\right)+a\ppi\otimes 5Q\left(a\pii\right)\\
%&+x_{1} \otimes \Delta_{H}\left(x_{2}\right)+x_{1} \otimes \rho\left(x_{2}\right)+x_{1} \otimes \gamma\left(x_{2}\right)+x_{1} \otimes %Q\left(x_{2}\right)\\
%&+x\boi \otimes \Delta_{H}\left(x\boo\right)+x\boi\otimes \rho\left(x\boo\right)+x\boi \otimes \gamma\left(x\boo\right)+x\boi\otimes Q\left(\boo\right)\\
%&+x\boo \otimes \Delta_{A}\left(x\bi\right)+x\boo \otimes \phi\left(x\bi\right)+x\boo\otimes \psi\left(x\bi\right)+x\boo \otimes P\left(x\bi\right)\\
%&+x\qi \otimes \Delta_{A}\left(x\qii\right)+x\qi \otimes \phi\left(x\qii\right)+x\qi \otimes \psi\left(x\qii\right)+x\qi \otimes P\left(x\qii\right)
%\end{aligned}
%$$
\begin{eqnarray*}
&& (\Delta_{E} \otimes \id) \Delta_{E}(a, x)-(\id\otimes \Delta_{E}  ) \Delta_{E}(a, x)\\
&=&\Delta_{A}\left(a_{1}\right) \otimes a_{2}+\phi\left(a_{1}\right) \otimes a_{2}+\psi\left(a_{1}\right) \otimes a_{2}+P\left(a_{1}\right) \otimes a_{2}\\
&&+\Delta_{H}\left(a_{(-1)}\right) \otimes a_{(0)}+\rho\left(a_{(-1)}\right) \otimes a_{(0)}+\gamma\left(a_{(-1)}\right) \otimes a_{(0)}+Q\left(a_{(-1)}\right) \otimes a_{(0)}\\
&&+\Delta_{A}\left(a_{(0)}\right) \otimes a_{(1)}+\phi\left(a_{(0)}\right) \otimes a_{(1)}+\psi\left(a_{(0)}\right) \otimes a_{(1)}+P\left(a_{(0)}\right) \otimes a_{(1)}\\
&&+\Delta_{H}\left(a\ppi\right) \otimes a\pii+\rho\left(a\ppi\right) \otimes a\pii+\gamma\left(a\ppi\right) \otimes a\pii+Q\left(a\ppi\right) \otimes a\pii\\
&&+\Delta_{H}\left(x_{1}\right) \ot x_{2}+\rho\left(x_{1}\right) \otimes x_{2}+\gamma\left(x_{1}\right) \otimes x_{2}+Q\left(x_{1}\right) \otimes x_{2}\\
&&+\Delta_{A}\left(x\boi\right) \otimes x\boo+ \phi\left(x\boi\right) \otimes x\boo+\psi\left(x\boi\right)\otimes x\boo+P\left(x\boi\right)\otimes x\boo\\
&&+\Delta_{H}\left(x\boo\right) \otimes x\bi+ \rho\left(x\boo\right) \otimes x\bi+\gamma\left(x\boo\right)\otimes x\bi+Q\left(x\boo\right)\otimes x\bi\\
&&+\Delta_{A}\left(x\qi\right) \otimes x\qii+\phi\left(x\qi\right) \otimes x\qii+\psi\left(x\qi\right) \otimes x\qii+P\left(x\qi\right) \otimes x\qii\\
&&-a_{1} \otimes \Delta_{A}\left(a_{2}\right)-a_{1} \otimes \phi\left(a_{2}\right)-a_{1} \otimes \psi\left(a_{2}\right)-a_{1} \otimes P\left(a_{2}\right) \\
&&-a_{(-1)} \otimes \Delta_{A}\left(a_{(0)}\right)-a_{(-1)} \otimes \phi\left(a_{(0)}\right)-a_{(-1)} \otimes \psi\left(a_{(0)}\right)-a_{(-1)} \otimes P\left(a_{(0)}\right)\\
&&-a_{(0)} \otimes \Delta_{H}\left(a_{(1)}\right)-a_{(0)} \otimes \rho\left(a_{(1)}\right)-a_{(0)} \otimes \gamma\left(a_{(1)}\right)-a_{(0)} \otimes Q\left(a_{(1)}\right)\\
&&-a\ppi \otimes \Delta_{H}\left(a\pii\right)-a\ppi \otimes \rho\left(a\pii\right)-a\ppi \otimes \gamma\left(a\pii\right)-a\ppi\otimes Q\left(a\pii\right)\\
&&-x_{1} \otimes \Delta_{H}\left(x_{2}\right)-x_{1} \otimes \rho\left(x_{2}\right)-x_{1} \otimes \gamma\left(x_{2}\right)-x_{1} \otimes Q\left(x_{2}\right)\\
&&-x\boi \otimes \Delta_{H}\left(x\boo\right)-x\boi\otimes \rho\left(x\boo\right)-x\boi \otimes \gamma\left(x\boo\right)-x\boi\otimes Q\left(\boo\right)\\
&&-x\boo \otimes \Delta_{A}\left(x\bi\right)-x\boo \otimes \phi\left(x\bi\right)-x\boo\otimes \psi\left(x\bi\right)-x\boo \otimes P\left(x\bi\right)\\
&&-x\qi \otimes \Delta_{A}\left(x\qii\right)-x\qi \otimes \phi\left(x\qii\right)-x\qi \otimes \psi\left(x\qii\right)-x\qi \otimes P\left(x\qii\right)
\end{eqnarray*}
and the right hand side is calculated in the same way. Thus the two sides are equal to each other if and only if  (CCP1)--(CCP8) hold.
\end{proof}

\begin{definition}\label{cocycledmp}
Let $A, H$ be both  anti-flexible algebras and  anti-flexible coalgebras. If  the following conditions hold:
\begin{enumerate}
\item[(CDM1)]  $\phi(ab)+\tau\psi(ba)+\gamma(\theta(a, b))+\tau\rho(\theta(b, a))$\\
$=\theta\left(b, a_{2}\right) \otimes a_{1}+(b \trr a_{(1)}) \otimes a_{(0)}+b\pii \otimes \left(b\ppi \ppr a\right)+b_{(1)}\ot b_{(0)} a$\\
$+\theta\left(a_{1}, b \right) \otimes a_{2}+(a_{(-1)} \trl b) \otimes a_{(0)}+b_{(-1)} \otimes\left( a b_{(0)}\right)+b\ppi \ot (a\ppl b\pii),$

%\item[(CDM1)] $\psi(a b)+\tau\phi(b a )+\rho(\theta(a, b))+\tau \gamma(\theta( b,a))$\\
%$=(b a_{(0)}) \otimes a_{(-1)}+ \left(b \ppl a\pii\right) \otimes a\ppi +b_{2} \otimes \theta\left(b_{1}, a\right)+b_{(0)} \otimes\left(b_{(-1)} \trl a\right)$\\
%$+a_{(0)} \otimes b a_{(1)}+ \left( a\ppi\ppr b\right) \otimes a\pii+b_{1}\ot \tht(a, b_{2})+b_{(0)}\ot (a\trr b_{(1)}),$\\

\item[(CDM2)] $\rho(x y)+\tau \gamma( y x)+\psi(\sigma(x, y))+\tau\phi(\sigma(y, x))$\\
$=\sigma\left(y, x_{2}\right) \otimes x_{1}+\left(y \ppr x_{[1]}\right) \otimes x_{[0]}+y_{[1]} \otimes y_{[0]} x+y\qii\otimes (y\qi\trr x)$\\
$+\sigma\left( x_{1}, y\right) \otimes x_{2}+(x_{[-1]}\ppl y)\ot x_{[0]}+y_{[-1]}\ot xy_{[0]}+y\qi\ot (x\trl y\qii), $

%\item[(CDM2)] $\gamma(x y)+\tau \rho(yx)+\phi(\sigma(x, y))+\tau\psi(\sigma(y, x))$\\
%$=yx_{[0]}\otimes x_{[-1]}+\left(y\trl x\qii\right)\otimes x\qi+y_{2} \otimes \sigma\left(y_{1}, x\right)+y_{[0]}\otimes (y_{[-1]}\ppl x)$\\
%$+x\poo y\ot x\bi+(x\qi\trr y)\ot x\qii+y\li\ot \si(x,y\lii)+y\poo\ot (x\ppr y\bi), $\\

\item[(CDM3)] $\Delta_{A}(x \ppr b)+\tau\Delta_{A}(b \ppl x)+Q(x\trl  b)+\tau Q(b \trr x)$ \\
$=\left(b\ppl x_{[0]}\right) \otimes x_{[-1]}+b x\qii \otimes x\qi+b_{2} \otimes\left(b_{1} \ppl x\right)+b\loo\ot \si(b\loi, x)$\\
$+\left(x_{[0]} \ppr b\right)\otimes x_{[1]}+x\qi b \otimes x\qii+b\li \ot (x\ppr b\lii)+b_{(0)} \otimes \sigma\left( x, b_{(1)}\right),$

%\item[(CDM3)] $\Delta_{A}(a\ppl y)+\tau \Delta_{A}(y\ppr a)+Q(a\trr y)+\tau Q(y\trl a)$\\
%$=(y\ppr a\lii)\ot a\li+\si(y,a\lmi)\ot a\loo+y_{[-1]} \otimes\left(y_{[0]} \ppr a\right)+y\qii \otimes y\qi a$\\
%$+\left(a_{1} \ppl y\right)\ot a\lii+\sigma(a_{(-1)}, y) \otimes a_{(0)}+y\boi \ot (a\ppl y\boo)+y\qi\ot ay\qii, $\\

\item[(CDM4)] $\Delta_{H}(a \trr y)+\tau \Delta_{H}(y \trl a)+P(a\ppl y)+\tau P(y\ppr a)$\\
$= (y\trl a\loo)\ot a_{(-1)}+ ya\pii\ot a\ppi+y_{2} \otimes\left(y_{1}  \trl a\right)+y\poo\ot \tht(y\boi,  a)$\\
$+\left(a_{(0)}\trr y\right)\otimes a_{(1)}+a\ppi y\otimes a\pii+y\li\ot (a\trr y\lii)+y_{[0]} \otimes \theta\left(a , y_{[1]}\right), $

%\item[(CDM4)] $\Delta_{H}(x \trl b)+\tau\Delta_{H}(b \trr x)+P(x\ppr b)+\tau P(b\ppl x)$\\
%$=\left(b \trr x_{2}\right) \otimes x_{1}+ \theta\left(b,  x_{[1]}\right)\otimes x_{[0]}+b_{(1)} \otimes\left(b_{(0)}\trr x\right)+b\pii \otimes b\ppi x$\\
%$+\left(x_{1}  \trl b\right)\otimes x_{2}+\theta\left(x_{[-1]} , b\right) \otimes x_{[0]}+b_{(-1)}\ot (x\trl b\loo)+b\ppi\ot xb\pii$, \\

\item[(CDM5)]$\Delta_{H}(\theta(a,b))+\tau\Delta_{H}(\theta(b,a))+P(a b)+\tau P(b a)$\\
$=\theta(b,a_{(0)})\otimes a_{(-1)}+(b\trr a\pii)\otimes a\ppi+b _{(1)} \ot \tht(b\lmoo, a)+b\pii \otimes (b\ppi\trl a)$\\
$+\theta(a_{(0)},b)\otimes a_{(1)} +( a\ppi\trl b)\otimes a\pii+b_{(-1)} \otimes\theta(a, b_{(0)})+b\ppi\ot (a\trr b\pii),$

\item[(CDM6)]$\Delta_{A}(\sigma(x,y))+\tau \Delta_{A}(\sigma(y, x))+Q(x y)+\tau Q(yx)$\\
$=(y\ppr x\qii)\ot x\qi+\sigma(y,x_{[0]})\otimes x_{[-1]}+y\bi \ot \si( y\poo, x)+y\qii\otimes (y\qi\ppl x)$\\
$+\si(x\poo,y )\ot x\bi+(x\qi\ppl y)\otimes x\qii+y_{[-1]}\otimes\sigma(x,y_{[0]})+y\qi\ot (x\ppr y\qii), $

\item[(CDM7)]
 $\phi(x \ppr b)+\tau\psi(b \ppl x)+\gamma(x\trl b)+\tau \rho(b\trr x)$\\
 $=(b\trr x\poo)\ot x\boi+\tht(b, x\qii)\ot x\qi+b_{(1)}\ot (b\loo\ppl x)+b\pii\ot \si(b\ppi, x)$\\
 $+(x_{[0]}\trl b)\otimes x_{[1]}+\tht(x\qi, b)\ot x\qii+b_{(-1)} \otimes (x \ppr b_{(0)})+b\ppi\ot\sigma(x, b\pii), $
%\item[(CDM7)]
%$\psi(a\ppl y)+\tau\phi(y\ppr a)+\rho(a \trr y)+\tau\gamma(y \trl a)$\\
%$=(y\ppr a\loo)\ot a_{(-1)}+\si(y,a\pii)\ot a\ppi+y\bi \ot (y\poo\trl a)+y\qii \ot \tht(y\qi, a)$\\
%$+(a_{(0)}\ppl y) \otimes a_{(1)}+\si(a\ppi, y)\ot y\pii+ y_{[-1]} \otimes (a\trr y_{[0]})+y\qi\ot\tht(a,y\qii), $\\
\item[(CDM8)]
$\psi(x \ppr b)+\tau\phi(b \ppl x)+\rho(x\trl b)+\tau\gamma(b\trr x)$\\
$=(b\ppl x\lii) \ot x\li+ bx\bi\ot x\poo+b\lii\ot (b\li \trr x)+b\loo\ot b\loi x$\\
$+(x\li \ppr b) \otimes x\lii+x_{[-1]} b \otimes x_{[0]}+b\li\ot (x\trl b\lii)+b\loo\ot xb_{(1)}, $

\item[(CDM9)]$a\li\ot\tht(a\lii, b)+a\loo\ot(a_{(1)}\trl b)+ab\loo\ot b_{(1)}+(a\ppl b\ppi)\ot b\pii$\\
$-b\li\ot\tht(b\lii, a)-b\loo\ot(b_{(1)}\trl a)-ba\loo\ot a_{(1)}-(b\ppl a\ppi)\ot a\pii$\\
 $=\tau\Big(a\loi\ot a\loo b+a\ppi\ot (a\pii\ppr b)+\tht(a, b\li)\ot b\lii+(a\trr b\loi)\ot b\loo$\\
$-b\loi\ot b\loo a-b\ppi\ot (b\pii\ppr a)-\tht(b, a\li)\ot a\lii-(b\trr a\loi)\ot a\loo\Big)$,

\item[(CDM10)]$(\id-\tau)(a\loi\ot\tht(a\loo, b)+a\ppi\ot (a\pii\trl b)+\tht(a, b\loo)\ot b_{(1)}+(a\trr b\ppi)\ot b\pii)$\\
$=(\id-\tau)(b\loi\ot\tht(b\loo, a)+b\ppi\ot (b\pii\trl a)+\tht(b, a\loo)\ot a_{(1)}+(b\trr a\ppi)\ot a\pii)$,

\item[(CDM11)]$(\id-\tau)(x\li\ot(x\lii\trl b)+x\boo\ot\tht(x\bi, b)+(x\trl b\loo)\ot b_{(1)}+xb\ppi\ot x\pii)$\\
$=(\id-\tau)(b\loi\ot(b\loo\trr x)+b\ppi\ot b\pii x+(b\trr x\li)\ot x\lii+\tht(b, x\boi)\ot x\boo)$,

\item[(CDM12)]$(\id-\tau)(x\boi\ot(x\boo\ppr b)+x\qi\ot x\qii b+(x\ppr b\li)\ot b\lii+\si(x, b\loi)\ot b\loo)$\\
$=(\id-\tau)(b\li\ot (b\lii \ppl x)+b\loo\ot\si(b_{(1)}, x)+(b\ppl x\boo)\ot x\bi+bx\qi\ot x\qii)$,

\item[(CDM13)]$x\li\ot (x\lii \ppr b)+x\boo\ot x\bi b+(x\trl b\li)\ot b\lii+xb\loi\ot b\loo$\\
$-b\loi\ot (b\loo\ppl x)-b\ppi\ot\si(b\pii, x)-(b\trr x\boo)\ot x\bi-\tht(b, x\qi)\ot x\qii$\\
$=\tau\Big(x\boi\ot(x\boo\trl b)+x\qi\ot\tht(x\qii, b)+(x\ppr b\loo)\ot b_{(1)}+\si(x, b\ppi)\ot b\pii$\\
$-b\li\ot(b\lii\trr x)-b\loo\ot b_{(1)}x-(b\ppl x\li)\ot x\lii-bx\boi\ot x\boo\Big)$,

\item[(CDM14)]$x\li\ot\si(x\lii, y)+x\boo\ot(x\bi\ppl y)+xy\boo\ot y\bi+(x\trl y\qi)\ot y\qii$\\
$-y\li\ot\si(y\lii, x)-y\boo\ot(y\bi\ppl x)-yx\boo\ot x\bi-(y\trl x\qi)\ot x\qii$\\
$=\tau\Big(x\boi\ot x\boo y+x\qi\ot(x\qii\trr y)+\si(x, y\li)\ot y\lii+(x\ppr y\boi)\ot y\boo$\\
$-y\boi\ot y\boo x-y\qi\ot(y\qii\trr x)-\si(y, x\li)\ot x\lii-(y\ppr x\boi)\ot x\boo\Big)$,

\item[(CDM15)]$(\id-\tau)(x\boi\ot \si(x\boo, y)+x\qi\ot(x\qii\ppl y)+\si(x, y\boo)\ot y\bi+(x\ppr y\qi)\ot y\qii)$\\
$=(\id-\tau)(y\boi\ot \si(y\boo, x)+y\qi\ot(y\qii\ppl x)+\si(y, x\boo)\ot x\bi+(y\ppr x\qi)\ot x\qii)$,
\end{enumerate}
\noindent then $(A, H)$ is called a \emph{cocycle double matched pair}.
\end{definition}

\begin{definition}\label{cocycle-braided}
(i) A \emph{cocycle braided  anti-flexible bialgebra} $A$ is simultaneously a cocycle anti-flexible algebra $(A, \, \theta)$ and  a cycle anti-flexible coalgebra $(A, \,  Q)$ satisfying the  conditions
\begin{enumerate}
\item[(CBB1)] $\Delta_{A}(ab)+\tau\Delta_{A}(ba)+Q\theta(a,b)+\tau Q\theta(b,a)$\\
$=b a_{2}\otimes a_{1} + b_{2}\otimes b_{1}a + a_{1} b \otimes a_{2}+b_{1} \otimes a b_{2}$\\
$ +(b\ppl a_{(1)}) \otimes a_{(0)}+b_{(0)} \otimes(b_{(-1)} \ppr a )+(a_{(-1)}\ppr b) \otimes a_{(0)}+b_{(0)} \otimes (a\ppl b_{(1)}), $

\item[(CBB2)] $(\id-\tau)\left( a\li\ot a\lii b+ab\li\ot b\lii-b\li\ot b\lii a-ba\li\ot a\lii\right)$\\
$+(\id-\tau)(a\loo\ot (a_{(1)}\ppr b)+(a\ppl b\loi)\ot b\loo-b\loo\ot (b_{(1)}\ppr a)-(b\ppl a\loi)\ot a\loo)=0. $
\end{enumerate}
(ii) A \emph{cocycle braided  anti-flexible bialgebra} $H$ is simultaneously a cocycle  anti-flexible algebra $(H, \, \sigma)$ and a cycle  anti-flexible coalgebra $(H, \,  P)$ satisfying the conditions
\begin{enumerate}
\item[(CBB3)] $\Delta_{H}(xy)+\tau\Delta_{H}(y x)+P\sigma(x,y)+\tau P\sigma(y,x)$\\
$=y x_{2}\otimes x_{1} + y_{2}\otimes y_{1}x + x_{1} y \otimes x_{2}+y_{1} \otimes x y_{2}$\\
$ +(y\trl x_{[1]}) \otimes x_{[0]}+y_{[0]} \otimes(y_{[-1]} \trr x )+(x_{[-1]}\trr y) \otimes x_{[0]}+y_{[0]} \otimes (x\trl y_{[1]}), $

\item[(CBB4)]
$(\id-\tau)(x\li\ot x\lii y+xy\li\ot y\lii-y\li\ot y\lii x-yx\li\ot x\lii)$\\
$+(\id-\tau)(x\boo\ot(x\bi\trr y)+(x\trl y\boi)\ot y\boo-y\boo\ot(y\bi\trr x)-(y\trl x\boi)\ot x\boo)=0$.
\end{enumerate}
\end{definition}

The next theorem says that we can obtain an ordinary anti-flexible bialgebra from two cocycle braided anti-flexible bialgebras.
\begin{theorem}\label{main2}
Let $A$, $H$ be  cocycle braided anti-flexible bialgebras, $(A, H)$ be a cocycle cross product system and a cycle cross coproduct system.
Then the cocycle cross product   algebra and cycle cross coproduct   coalgebra fit together to become an ordinary
anti-flexible bialgebra if and only if $(A, H)$ forms a cocycle double matched pair. We will call it the cocycle bicrossproduct anti-flexible bialgebra and denote it by $A^{P}_{\sigma}\# {}^{Q}_{\theta}H$.
\end{theorem}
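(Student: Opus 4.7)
The plan is to verify that $E := A^{P}_{\sigma}\# {}^{Q}_{\theta}H$, with its cocycle cross product multiplication and cycle cross coproduct comultiplication, satisfies the two bialgebra compatibilities \eqref{eq:LB0} and \eqref{eq:LB1} for all $(a,x),(b,y)\in E$ precisely when the listed conditions hold. The underlying anti-flexible algebra structure on $E$ has already been reduced to the axioms (CP1)--(CP8) in the cocycle cross product lemma, and the co-anti-flexible structure to (CCP1)--(CCP8) in the cycle cross coproduct lemma, so only these two mixed compatibilities remain. Conceptually, the proof is a direct generalisation of Theorem \ref{main1}, with extra summands tracking $\sigma,\theta,P,Q$.

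The first step is to expand $\Delta_E((a,x)(b,y))+\tau\Delta_E((b,y)(a,x))$. One computes $(a,x)(b,y)$, whose $A$-component is $ab+x\ppr b+a\ppl y+\sigma(x,y)$ and whose $H$-component is $xy+x\trl b+a\trr y+\theta(a,b)$, and then applies the four-part $\Delta_E=\Delta_A+\phi+\psi+P$ on $A$ and $\Delta_E=\Delta_H+\rho+\gamma+Q$ on $H$ to each piece, together with its $\tau$-twist. The right-hand side $(b,y)\cdot\tau\Delta_E(a,x)+\tau\Delta_E(b,y)\cdot(a,x)+\Delta_E(a,x)\bullet(b,y)+(a,x)\bullet\Delta_E(b,y)$ is expanded in the same spirit, using the four summands of each $\Delta_E$ and the four summands of each product.

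I then sort every resulting term by which of the four tensor components $A\otimes A$, $A\otimes H$, $H\otimes A$, $H\otimes H$ it lives in, and match the two sides component by component. The pure $A\otimes A$ matching reproduces exactly condition (CBB1) for the cocycle braided bialgebra $A$, the pure $H\otimes H$ matching reproduces (CBB3) for $H$, and the mixed $A\otimes H$ and $H\otimes A$ components yield the eight identities (CDM1)--(CDM8), each recording how one particular pattern of cocycle, cycle, action and coaction contributes to a fixed tensor component. An entirely parallel computation applied to the second compatibility \eqref{eq:LB1}, sorted by the same four tensor components, produces (CBB2) and (CBB4) on the pure parts and (CDM9)--(CDM18) on the mixed parts.

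The main obstacle is not conceptual but organisational: each of the two compatibilities expands into on the order of one hundred terms, and matching them without double-counting requires a systematic layout. To keep the bookkeeping tractable I would tabulate the terms on each side doubly: by source map ($\Delta_A,\phi,\psi,P$ versus $\Delta_H,\rho,\gamma,Q$) and by which of the four pieces $ab,\,x\ppr b,\,a\ppl y,\,\sigma(x,y)$ (resp.\ $xy,\,x\trl b,\,a\trr y,\,\theta(a,b)$) they originate from, and by target tensor component. Then each of (CBB1)--(CBB4) and (CDM1)--(CDM18) appears as exactly one block of matching coefficients, and the equivalence claimed in the theorem is obtained by reading off these identifications term by term.
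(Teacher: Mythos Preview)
Your approach is correct and essentially identical to the paper's: both expand the two compatibility conditions \eqref{eq:LB0} and \eqref{eq:LB1} directly on $E=A^{P}_{\sigma}\#{}^{Q}_{\theta}H$ and sort the resulting terms to extract (CBB1)--(CBB4) and (CDM1)--(CDM18). One minor imprecision in your summary is that (CDM1)--(CDM8) do not all land in the mixed tensor components (for instance (CDM3), (CDM6) live in $A\otimes A$ and (CDM4), (CDM5) in $H\otimes H$), but your proposed double tabulation by source piece and target component is exactly the right bookkeeping and is what the paper does implicitly.
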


The proof is by direct computations, so we omit the details.

\section{Extending structures for   anti-flexible bialgebras}
In this section, we will study the extending problem for     anti-flexible bialgebras.
We will find some special cases when the braided anti-flexible bialgebra is reduced into an ordinary    anti-flexible bialgebra.
It is proved that the extending problem can be solved by using of the non-abelian cohomology theory based on cocycle bicrossedproduct for braided anti-flexible bialgebras in last section.

\subsection{Extending structures for anti-flexible algebras (coalgebras)}
First we are going to study extending problem for anti-flexible algebras (coalgebras).

There are two cases for $A$ to be an anti-flexible algebra in the cocycle cross product system defined in last section, see condition (CC6). The first case is when we let $\ppr$, $\ppl$ to be trivial and $\theta\neq 0$,  then from condition (CP1) we get $\si(x, \theta(a, b))+\si(\theta(a, b), x)=0$. Since $\theta\neq 0$ we assume $\sigma=0$ for simplicity, thus  we obtain the following type $(a1)$  unified product for anti-flexible algebras.

\begin{lemma}
Let ${A}$ be an anti-flexible algebra and $V$ a vector space. An extending datum of ${A}$ by $V$ of type (a1)  is  $\Omega^{(1)}({A},V)=(\trr, \, \trl, \, \theta, \, \cdot)$ consisting of bilinear maps
\begin{eqnarray*}
\trr: A\otimes V \to V,\quad \trl: V\otimes A \to V,\quad\theta: A\ot A\to V,\quad\cdot: V\ot V\to V.
\end{eqnarray*}
%Let $\Omega^{(1)}({A}, V)$ be an extending datum of ${A}$ by $V$.
Denote by $A_{}\#_{\theta}V$ the vector space $E={A}\oplus V$ together with the multiplication given by
\begin{eqnarray}
(a, x)(b, y)=\big(ab, \, xy+x\trl b+a\trr y+\theta(a, b)\big).
\end{eqnarray}
Then $A_{}\# {}_{\theta}V$ is an anti-flexible algebra if and only if the following compatibility conditions hold for all $a$, $b\in {A}$, $x$, $y$, $z\in V$:
\begin{enumerate}
\item[(A1)]$(x y) \trl a+a \trr (yx)=x(y \trl a)+(a \trr y)x$,

\item[(A2)]$ (x\trl a) y-x(a\trr y)=(y\trl a)x-y(a\trr x)$,

\item[(A3)] $(ab) \trr x+\theta(a, b) x-a\trr (b\trr x)=(x\trl b)\trl a -x\theta(b,a)-x\trl(ba)$,

\item[(A4)] $(a\trr x)\trl b -a\trr(x\trl b)=(b\trr x)\trl a -b\trr(x\trl a) $,

\item[(A5)]$\theta(a b, c)-\theta(a, b c)+\theta(a, b) \triangleleft c-a\trr \theta(b, c)=\theta(c b, a)-\theta(c, b a)+\theta(c, b) \trl a-c\trr \theta(b, a),$

\item[(A6)] $(xy)z-x(yz)=(zy)x-z(yx)$,
\end{enumerate}
\end{lemma}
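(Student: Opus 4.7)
The plan is to verify the anti-flexible identity on $E=A\oplus V$ directly by expansion; this could also be obtained as a specialization of the cocycle cross product anti-flexible algebra from Section 5 by setting $\sigma=0$, $\ppr=0$, $\ppl=0$, but because only six conditions survive the specialization, writing the argument out directly seems cleaner. By bilinearity it suffices to evaluate both sides of
\[
((u,v)(u',v'))(u'',v'') - (u,v)((u',v')(u'',v'')) = ((u'',v'')(u',v'))(u,v) - (u'',v'')((u',v')(u,v))
\]
on triples whose entries lie in either $A$ or $V$, giving $2^3=8$ cases which pair up into six patterns under the involution $(u,v,w)\mapsto(w,v,u)$: $AAA$, $AAV\lra VAA$, $AVA$, $VVA\lra AVV$, $VAV$, and $VVV$.

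The first observation is that the $A$-component of the product is simply $ab$ and does not see $V$, so the $A$-component of the associator $(u,v,w)_E$ vanishes the moment any argument lies in $V$, and when all three arguments are in $A$ it reduces to the associator in $A$, which is symmetric by anti-flexibility of $A$. Consequently, in every one of the six patterns the identity to verify lives purely in $V$.

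I would then expand the $V$-component of the associator in each pattern and match it with its reversed counterpart. Pattern $AAA$ produces the twisted cocycle equation $\theta(ab,c)+\theta(a,b)\trl c-a\trr\theta(b,c)-\theta(a,bc)$ equal to its reverse, which is (A5). Pattern $AAV\lra VAA$, comparing $(a,b,x)_E$ with $(x,b,a)_E$, delivers (A3); pattern $AVA$ delivers (A4); pattern $VVA\lra AVV$ delivers (A1); pattern $VAV$ delivers (A2); and pattern $VVV$ collapses to the anti-flexible identity $(xy)z-x(yz)=(zy)x-z(yx)$ on $V$, which is (A6). Conversely, picking inputs from $A$ and $V$ separately shows that each of (A1)--(A6) is also necessary, so the six conditions together are equivalent to anti-flexibility of $E$.

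The main obstacle is purely clerical: in the mixed patterns, terms involving the actions $\trr$ and $\trl$, the cocycle $\theta$, and both multiplications appear simultaneously, and one must carefully sort them so that the $\theta$-twist on the left-hand side lines up with the $\theta$-twist on the right-hand side. No genuinely new structural difficulty arises beyond the bookkeeping already encoded in the six stated conditions.
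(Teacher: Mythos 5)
Your proposal is correct and matches the paper's underlying computation: the paper obtains this lemma by specializing the cocycle cross product lemma of Section 5 (setting $\ppr,\ppl$ trivial and $\sigma=0$), whose own proof is exactly the direct expansion you carry out, and your pairing of the eight pure patterns into six identities under $(u,v,w)\mapsto(w,v,u)$ reproduces (A1)--(A6) correctly (e.g.\ $AAV\lra VAA$ gives (A3), $AAA$ gives (A5), $VVV$ gives (A6)). The trilinearity argument for sufficiency and the necessity by restriction to pure elements are both sound, so no gap remains.
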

Note that (A1)--(A4)  are deduced from (CP5)--(CP8) and by (A6)  we obtain that $V$ is an anti-flexible algebra. Furthermore, $V$ is in fact an anti-flexible subalgebra of $A_{}\#_{\theta}V$  but $A$ is not although $A$ is itself an anti-flexible algebra.

Denote also the set of all  algebraic extending datum of ${A}$ by $V$ of type (a1)  by $\Omega^{(1)}({A},V)$ by abuse of notations.

%Note that $A_{}\#_{\theta}V$  is an algebra containing ${V}$ as a  subalgebra.
%In fact any algebraic structure on $E$ containing $A$  as subspace and ${V}$ as  subalgebra is isomorphic to such a unified product of this type.

In the following, we always assume that $A$ is a subspace of a vector space $E$, there exists a projection map $p: E \to{A}$ such that $p(a) = a$, for all $a \in {A}$.
Then the kernel space $V := \ker(p)$ is also a subspace of $E$ and a complement of ${A}$ in $E$.

\begin{lemma}\label{lem:33-1}
Let ${A}$ be an anti-flexible algebra and $E$ a vector space containing ${A}$ as a subspace.
Suppose that there is an anti-flexible algebra structure on $E$ such that $V$ is an anti-flexible  subalgebra of $E$
and the canonical projection map $p: E\to A$ is an anti-flexible algebra homomorphism.
%and $p([x, a]_E)=0$ for all $x\in V, a\in A$.
Then there exists an algebraic extending datum $\Omega^{(1)}({A},V)$ of ${A}$ by $V$ such that
$E\cong A_{}\#_{\theta}V$.
\end{lemma}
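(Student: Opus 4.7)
The plan is to construct the three structure maps $(\trr,\trl,\theta)$ directly from the given anti-flexible algebra structure on $E$, and then verify that the resulting unified product $A\#_{\theta}V$ coincides with $E$ via the obvious vector space isomorphism.

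First I would exploit the hypothesis on the projection. Since $p:E\to A$ is an anti-flexible algebra homomorphism and $V=\ker p$, we have $p(ax)=p(a)p(x)=0$ and $p(xa)=0$ for every $a\in A$, $x\in V$, which forces $AV\subseteq V$ and $VA\subseteq V$; that is, $V$ is a two-sided ideal of $E$. Moreover, for $a,b\in A\subseteq E$ we have $p(ab)=a\cdot_{A}b$, so $ab-a\cdot_{A}b\in V$. These observations let me define
\begin{eqnarray*}
 a\trr x:=ax,\qquad x\trl a:=xa,\qquad \theta(a,b):=ab-a\cdot_{A}b,
\end{eqnarray*}
where the products on the right are all taken in $E$. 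By construction $\trr:A\otimes V\to V$, $\trl:V\otimes A\to V$, and $\theta:A\otimes A\to V$, which is exactly an extending datum of type $(a1)$.

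Next, I would introduce the candidate isomorphism $\varphi:A\#_{\theta}V\to E$ defined by $\varphi(a,x)=a+x$. It is obviously a linear bijection since $E=A\oplus V$. To see it preserves multiplication, I expand
\begin{eqnarray*}
\varphi(a,x)\varphi(b,y)=(a+x)(b+y)=ab+ay+xb+xy
=a\cdot_{A}b+\theta(a,b)+a\trr y+x\trl b+xy,
\end{eqnarray*}
using that $V$ is a subalgebra of $E$ (so $xy\in V$) and the definitions above. This equals $\varphi\bigl(a\cdot_{A}b,\;xy+a\trr y+x\trl b+\theta(a,b)\bigr)$, which is precisely the defining formula of the product on $A\#_{\theta}V$.

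Finally, since $\varphi$ is an algebra isomorphism and $E$ is anti-flexible, the transported structure on $A\#_{\theta}V$ is anti-flexible, and by the preceding lemma this is equivalent to the compatibility conditions (A1)--(A6) holding for $(\trr,\trl,\theta)$. Hence $\Omega^{(1)}(A,V)=(\trr,\trl,\theta)\in\mathcal{A}^{(1)}(A,V)$ and $E\cong A\#_{\theta}V$ as required. I do not expect any serious obstacle here: the proof is essentially a bookkeeping exercise, and the only mild subtlety is keeping the two multiplications (the original one in $A$ and the ambient one in $E$) notationally distinct so that the cocycle $\theta$ is correctly identified as their discrepancy.
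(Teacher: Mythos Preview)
Your proof is correct and follows essentially the same approach as the paper's own proof: both define the extending datum by $x\trl a:=x\cdot_E a$, $a\trr x:=a\cdot_E x$, $\theta(a,b):=a\cdot_E b-p(a\cdot_E b)$, $x\cdot_V y:=x\cdot_E y$, and take $\varphi(a,x)=a+x$ as the isomorphism. Your write-up is actually more careful than the paper's (which omits the definition of $\trr$ and does not spell out the verification that $\varphi$ is multiplicative or that the datum satisfies (A1)--(A6)).
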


\begin{proof}
Since $V$ is a  subalgebra of $E$, we have $x\cdot_E y\in V$ for all $x, y\in V$.
We define the extending datum of ${A}$ through $V$ by the following formulas:
\begin{eqnarray*}
\trl: V\otimes {A} \to V, \qquad {x} \triangleleft {a} &:=&{x}\cdot_E {a},\\
\theta: A\otimes A \to V, \qquad \theta(a,b) &:=&a\cdot_E b-p \bigl(a\cdot_E b\bigl),\\
{\cdot_V}: V \otimes V \to V, \qquad {x}\cdot_V {y}&:=& {x}\cdot_E{y} .
\end{eqnarray*}
for all $a , b\in {A}$ and $x, y\in V$. It is easy to see that the above maps are  well defined and
$\Omega^{(1)}({A}, V)$ is an extending system of ${A}$ through $V$ and
\begin{eqnarray*}
\varphi:A_{}\#_{\theta}V\to E, \qquad \varphi(a, x) := a+x
\end{eqnarray*}
is an isomorphism of  anti-flexible algebras.
\end{proof}

\begin{lemma}
Let $\Omega^{(1)}(A, V) = \bigl(\trr,  \trl, \theta, \cdot \bigl)$ and $\Omega'^{(1)}(A, V) = \bigl(\trr ', \, \trl', \theta', \cdot ' \bigl)$
be two extending datums of ${A}$ by $V$ of type (a1) and $A_{}\#_{\theta} V$, $A_{}\#_{\theta'} V$ be the corresponding unified products. Then there exists a bijection between the set of all homomorphisms of anti-flexible algebras $\varphi: A_{\theta}\#_{\trr, \trl} V\to A_{\theta'}\#_{\trr', \trl'} V$ whose restriction on ${A}$ is the identity map and the set of pairs $(r, s)$, where $r: V\rightarrow {A}$ and $s: V\rightarrow V$ are two linear maps satisfying
\begin{eqnarray}
&&{r}(x\trl a)={r}(x)\cdot' a,\\
&&{r}(a\trr x)= a\cdot'{r}(x),\\
&&a\cdot' b=ab+r\theta(a, b),\\
&&{r}(xy)={r}(x)\cdot' {r}(y),\\
&&{s}(x)\trl' a+\theta'(r(x), a)={s}(x\trl a),\\
&& a\trr'{s}(y)+\theta'(a, r(y) )={s}(a\trr y),\\
&&\theta'(a, b)=s\theta(a, b),\\
&&{s}(xy)={s}(x)\cdot' {s}(y)+{s}(x)\trl'{r}(y)+{r}(x)\trr'{s}(y)+\theta'(r(x), r(y)).
\end{eqnarray}
for all $a\in{A}$ and $x$, $y\in V$.

Under the above bijection the homomorphism of anti-flexible  algebras $\varphi=\varphi_{r, s}: A_{}\#_{\theta}V\to A_{}\#_{\theta'} V$ to $(r, s)$ is given  by $\varphi(a, x)=(a+r(x), s(x))$ for all $a\in {A}$ and $x\in V$. Moreover, $\varphi=\varphi_{r, s}$ is an isomorphism if and only if $s: V\rightarrow V$ is a linear isomorphism.
\end{lemma}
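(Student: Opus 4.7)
The plan is to set up a correspondence between the two sides by first pinning down the form of any such $\varphi$, and then translating the homomorphism condition into the eight equations listed.

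First, I would observe that any linear map $\varphi : A\#_{\theta}V \to A\#_{\theta'}V$ that restricts to the identity on $A$ is determined by two linear maps $r : V \to A$ and $s : V \to V$ via $\varphi(a, x) = (a, 0) + \varphi(0, x) = (a + r(x), s(x))$, where $(r(x), s(x)) := \varphi(0, x)$ decomposed along $E = A \oplus V$. Conversely, any pair $(r, s)$ defines a linear map of this form. Thus the problem reduces to characterizing which pairs $(r, s)$ yield an anti-flexible algebra homomorphism.

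Next, I would expand both sides of the homomorphism identity $\varphi\bigl((a,x)(b,y)\bigr) = \varphi(a,x) \cdot' \varphi(b,y)$ and split by components in $A$ and $V$. The left-hand side equals
\[
\bigl(ab + r(xy) + r(x\trl b) + r(a\trr y) + r\theta(a,b),\ s(xy) + s(x\trl b) + s(a\trr y) + s\theta(a,b)\bigr),
\]
while the right-hand side, after using the multiplication of $A\#_{\theta'} V$, expands into terms pairing $a, b$ with $r(x), r(y)$ via $\cdot'$, $\trr'$, $\trl'$, and $\theta'$. Specializing $(a, x, b, y)$ to the four patterns $(a,0,b,0)$, $(a,0,0,y)$, $(0,x,b,0)$, $(0,x,0,y)$ isolates the eight individual identities: the four $A$-component equations give conditions (i)--(iv) in the lemma, and the four $V$-component equations give (v)--(viii). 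By bilinearity, the four specializations together are equivalent to the full homomorphism condition, so $(r, s)$ works if and only if all eight hold.

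For the converse direction one verifies directly, by reassembling the four special cases, that whenever $(r, s)$ satisfies the eight identities, the map $\varphi_{r,s}(a, x) = (a + r(x), s(x))$ preserves multiplication. Finally, for the isomorphism criterion I would argue that since $\varphi_{r,s}$ stabilizes $A$, the induced map on the quotient $E/A \cong V$ is exactly $s$; hence $\varphi_{r,s}$ is bijective if and only if $s$ is bijective, in which case the inverse is explicitly $\varphi_{r,s}^{-1}(a,x) = (a - r(s^{-1}(x)), s^{-1}(x))$, which is again of the form $\varphi_{r',s'}$ with $s' = s^{-1}$ and $r' = -r\circ s^{-1}$.

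The main obstacle is purely bookkeeping: carefully sorting the many mixed terms from $\theta'(a + r(x), b + r(y))$, $s(x)\trl'(b+r(y))$, $(a+r(x))\trr' s(y)$, and $(a+r(x))\cdot'(b+r(y))$ into the correct specialization so that each of (i)--(viii) appears exactly once and no spurious constraint is produced. No new ideas are needed beyond the specialization/bilinearity trick; the content is essentially a careful expansion of the unified product multiplication in both $A\#_{\theta}V$ and $A\#_{\theta'}V$.
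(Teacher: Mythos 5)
Your proposal is correct and follows essentially the same route as the paper: write $\varphi(a,x)=(a+r(x),s(x))$, expand $\varphi((a,x)(b,y))=\varphi(a,x)\cdot'\varphi(b,y)$ in both components, and read off the eight conditions. You are in fact slightly more careful than the paper, since you make explicit the specialization/bilinearity step that splits the single homomorphism identity into the eight separate equations, and you supply the argument for the isomorphism criterion (via the induced map $s$ on $E/A\cong V$ and the explicit inverse $\varphi_{-r\circ s^{-1},\,s^{-1}}$), which the paper only asserts.
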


\begin{proof}
Let $\varphi: A_{}\#_{\theta}V\to A_{}\#_{\theta'} V$  be a homomorphism  whose restriction on ${A}$ is the identity map. Then $\varphi$ is determined by two linear maps $r: V\rightarrow {A}$ and $s: V\rightarrow V$ such that
$\varphi(a, x)=(a+r(x), s(x))$ for all $a\in {A}$ and $x\in V$.
In fact, we have to show
$$\varphi((a, x)(b, y))=\varphi(a, x)\cdot'\varphi(b, y).$$
The left hand side is equal to
\begin{eqnarray*}
&&\varphi((a, x)(b, y))\\
&=&\varphi\left({ab},\,  x\trl b+y\trl a+{xy}+\theta(a, b)\right)\\
&=&\big({ab}+ r(x\trl b)+r(y\trl a)+r({xy})+r\theta(a, b),\\
&&\qquad\quad s(x\trl b)+s(y\trl a)+s({xy})+s\theta(a, b)\big),
\end{eqnarray*}
and the right hand side is equal to
\begin{eqnarray*}
&&\varphi(a, x)\cdot' \varphi(b, y)\\
&=&(a+r(x), s(x))\cdot'  (b+r(y), s(y))\\
&=&\big((a+r(x))\cdot' (b+r(y)),  s(x)\trl'(b+r(y))+(a+r(x))\trr's(y)\\
&&\qquad\qquad +s(x)\cdot' s(y)+\theta'(a+r(x), b+r(y))\big).
\end{eqnarray*}
Thus $\varphi$ is a homomorphism of anti-flexible algebras if and only if the above conditions hold.
\end{proof}

The second case is when $\theta=0$,  we obtain the following type (a2)  unified product for  anti-flexible algebras.

%
%\begin{definition}
%Let $A$ be an algebra and $V$ a  vector space. An \textit{extending
%datum of $A$ through $V$} is a system $\Omega(A, V) =
%\bigl(\triangleleft, \, \triangleright, \, \leftharpoonup, \,
%\rightharpoonup, \, \sigma, \, \cdot \bigl)$ consisting of six bilinear
%maps
%\begin{eqnarray*}
%&&\ppr: V \otimes A \to A, \quad \ppl: A \otimes V \to A, \quad\triangleleft : V \otimes A \to V, \quad \trr: A \otimes V \to V \\
%&& \sigma: V\otimes V \to A, \quad \cdot \, : V\otimes V \to V
%\end{eqnarray*}
%
%Let $\Omega(A, V) = \bigl(\triangleleft, \, \triangleright, \,
%\leftharpoonup, \, \rightharpoonup, \, \sigma, \, \cdot \bigl)$ be an
%extending datum.  We denote by $ A \, \ltimes_{\Omega(A, V)} V = A
%\, \ltimes V$ the  vector space $A\oplus V$ together with the multiplication
%\begin{align}
%(a, x)(b, y)=\big(ab+x\ppr b+a\ppl y+\sigma(x, y), \, xy+x\trl b+a\trr y\big).
%\end{align}
% for all $x$, $y \in A$ and $u$, $v \in V$.
%
%The object $A\ltimes V$
%is called the \textit{unified product} of $A$ and $\Omega(A, V)$ if it is an algebra with the multiplication given by above equation. In this case the
%extending datum $\Omega(A, V) = \bigl(\triangleleft, \,\triangleright, \, \leftharpoonup, \, \rightharpoonup, \, \sigma, \,
%\cdot \bigl)$ is called an \textit{algebraic extending structure} of $A$ through $V$.
%\end{definition}

\begin{theorem}
Let $A$ be an anti-flexible algebra and $V$ a  vector space. An \textit{extending
datum of $A$ through $V$} of type (a2)  is a system $\Omega^{(2)}(A, V) =
\bigl(\triangleleft, \, \triangleright, \, \leftharpoonup, \,
\rightharpoonup, \, \sigma, \, \cdot \bigl)$ consisting of linear maps
\begin{eqnarray*}
&&\ppr: V \otimes A \to A, \quad \ppl: A \otimes V \to A, \quad\triangleleft : V \otimes A \to V, \quad \trr: A \otimes V \to V \\
&& \sigma: V\otimes V \to A, \quad \cdot \, : V\otimes V \to V
\end{eqnarray*}
Denote by $A{}_{\sigma}\# {}_{}H$ the vector space $E={A}\oplus V$ together with the multiplication
\begin{align}
(a, x)(b, y)=\big(ab+x\ppr b+a\ppl y+\sigma(x, y), \, xy+x\trl b+a\trr y\big).
\end{align}
Then $A{}_{\sigma}\# {}_{}H$  is an anti-flexible  algebra if and only if the following compatibility conditions hold for all $a, b, c\in A$,
$x, y, z\in V$:
\begin{enumerate}
\item[(B1)] $x \ppr(a b)+(ba)\ppl x=(x\ppr a) b+b(a\ppl x)+(x\trl a) \ppr b+b\ppl(a\trr x)$,

\item[(B2)] $(a\ppl x)b+(a\trr x)\ppr b -a(x\ppr b)-a\ppl(x\trl b)$\\
$=(b\trr x)\ppr a+(b\ppl x)a-b(x\ppr a)-b\ppl (x\trl a)$,

\item[(B3)] $(x y) \ppr a+\sigma(x, y) a -x\ppr(y\ppr a)-\sigma(x, y\trl a)$\\
$=(a\ppl y)\ppl x-a\ppl (yx)+\sigma(a\trr y, x)-a\sigma(y, x)$,

\item[(B4)] $(x\ppr a)\ppl y+\sigma(x\trl a, y)-x\ppr (a\ppl y)-\sigma(x, a\trr y)$\\
$=(y\ppr a)\ppl x+\sigma(y\trl a, x)-y\ppr(a\ppl x)-\sigma(y, a\trr x)$,

\item[(B5)]$(x y) \trl a+a \trr (yx)=x(y \trl a)+(a \trr y)x+x\trl(y\ppr a)+(a\ppl y)\trr x$,

\item[(B6)]$ (x\trl a) y+(x\ppr a) \trr y-x(a\trr y)-x\trl(a\ppl y)$\\
$=(y\trl a)x+(y\ppr a)\trr x-y(a\trr x)-y\trl(a\ppl x)$,

\item[(B7)] $(ab) \trr x -a\trr (b\trr x)=(x\trl b)\trl a -x\trl(ba)$,

\item[(B8)] $(a\trr x)\trl b -a\trr(x\trl b)=(b\trr x)\trl a -b\trr(x\trl a) $,

\item[(B9)] $\sigma(x y, z)-{\sigma}(x, y z)+\sigma(x, y)\ppl z-x \ppr \sigma(y, z)\\
=\sigma(z y, x)-{\sigma}(z, y x)+\sigma(z, y)\ppl x-z \ppr \sigma(y, x), $

\item[(B10)] $(x y) z-x(y z)+\sigma(x, y) \trr z-x \triangleleft \sigma(y, z)=(z y) x-z(y x)+\sigma(z, y) \trr x-z \trl \sigma(y, x)$.
\end{enumerate}
\end{theorem}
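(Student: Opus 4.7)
The plan is to verify the anti-flexible identity
$$\bigl((a,x)(b,y)\bigr)(c,z) - (a,x)\bigl((b,y)(c,z)\bigr) = \bigl((c,z)(b,y)\bigr)(a,x) - (c,z)\bigl((b,y)(a,x)\bigr)$$
in $E = A \oplus V$ directly, by expanding each of the four triple products with the prescribed multiplication on $A \#_{\sigma} V$ and then comparing the $A$- and $V$-components separately. The structural observation to record at the outset is that this theorem is precisely the specialization of the cocycle cross product anti-flexible algebra lemma to the case $\theta = 0$: setting $\theta = 0$ in (CP1)--(CP8) produces (B1)--(B8) verbatim, while the cocycle condition (CC1) for $\sigma$ and the cocycle associativity (CC5) reduce to (B9) and (B10). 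So one may either cite that lemma or redo the expansion; I would present the latter to keep the statement self-contained.

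For the expansion, I would first compute
$(a,x)(b,y) = \bigl(ab + x \ppr b + a \ppl y + \sigma(x,y),\ xy + x \trl b + a \trr y\bigr)$
and then apply the product once more on the right by $(c,z)$. This produces many summands on each component; doing the analogous expansions for the other three triple products and subtracting, the full identity must hold on each component.

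On the $A$-component, the pure term $(ab)c - a(bc) = (cb)a - c(ba)$ cancels by anti-flexibility of $A$. The remaining $A$-terms then sort cleanly by the pattern of $A$- vs $V$-inputs: summands with a single $V$-input and two $A$-inputs reproduce (B1) and (B2); those with two $V$-inputs and one $A$-input give (B3) and (B4); and those with three $V$-inputs, which are exactly the $\sigma$-terms $\sigma(xy,z)$, $\sigma(x,yz)$, $\sigma(x,y) \ppl z$, $x \ppr \sigma(y,z)$ and their right-hand analogues, yield the cocycle condition (B9). On the $V$-component, the pure term involving $(xy)z$, $x(yz)$, $\sigma(x,y) \trr z$, and $x \trl \sigma(y,z)$ (and reversals) gives (B10); the mixed terms with two $V$-inputs give (B5) and (B6); and the single-$V$ terms give (B7) and (B8).

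The main obstacle is purely bookkeeping: each component carries roughly twenty summands per side, and the correspondence with (B1)--(B10) is only transparent once the summands are organized by their input signature (how many arguments come from $A$ versus $V$, and which output map is applied). Once that organization is set up — and it follows the same template already used in the proof of the cocycle cross product lemma — each of (B1)--(B10) appears exactly once as the vanishing of a block of paired terms, giving the ``if and only if'' in both directions simultaneously.
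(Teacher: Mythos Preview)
Your proposal is correct and matches the paper's treatment: the paper presents this theorem without a separate proof, obtaining it as the $\theta=0$ specialization of the cocycle cross product lemma, exactly as you identify. Your observation that (CP1)--(CP8), (CC1), (CC5) reduce to (B1)--(B10) under $\theta=0$ is precisely the intended argument, and your optional direct expansion simply re-traces the proof of that earlier lemma in the special case.
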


Similar to Theorem \ref{lem:33-1},  we obtain.
\begin{theorem}
Let $A$ be an  anti-flexible algebra, $E$ a  vector space containing $A$ as a  subspace.
If there is an  anti-flexible algebra structure on $E$ such that $A$ is a  anti-flexible subalgebra of $E$. Then there exists an algebraic extending structure $\Omega^{(2)}(A, V) = \bigl(\triangleleft, \, \triangleright, \,
\leftharpoonup, \, \rightharpoonup, \, \sigma, \, \cdot \bigl)$ of $A$ through $V$ such that there is an isomorphism of anti-flexible algebras $E\cong A_{\sigma}\#_{}H$.
\end{theorem}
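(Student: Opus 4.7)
The plan is to mirror the proof of Lemma~\ref{lem:33-1}, adapted to the present type (a2) situation where $A$ (rather than $V$) is the anti-flexible subalgebra. Since $A$ is a subspace of $E$, choose any linear retraction $p:E\to A$ with $p|_A=\id_A$, and set $V:=\ker p$, so that $E=A\oplus V$ as vector spaces. Because $A$ is a subalgebra, $a\cdot_E b\in A$ for all $a,b\in A$, hence $p(a\cdot_E b)=a\cdot_E b=ab$; this is precisely why no ``$\theta$'' term arises and type (a2) is the correct form of unified product to use.

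Next I would construct the extending datum by decomposing every mixed product in $E$ according to $E=A\oplus V$. For $a\in A$ and $x,y\in V$, set
\begin{align*}
x\ppr a &:= p(x\cdot_E a), & x\trl a &:= x\cdot_E a - p(x\cdot_E a),\\
a\ppl y &:= p(a\cdot_E y), & a\trr y &:= a\cdot_E y - p(a\cdot_E y),\\
\sigma(x,y) &:= p(x\cdot_E y), & x\cdot y &:= x\cdot_E y - p(x\cdot_E y).
\end{align*}
These are well-defined linear maps of the required types. Verifying that the system $(\ppr,\ppl,\trl,\trr,\sigma,\cdot)$ satisfies (B1)--(B10) then amounts to splitting the anti-flexibility identity $(e_1,e_2,e_3)=(e_3,e_2,e_1)$ in $E$ into its $A$- and $V$-components for each choice of how many of the $e_i$ lie in $V$. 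Triples with exactly one entry in $V$ yield (B1), (B2), (B5), (B6); triples with exactly two entries in $V$ yield (B3), (B4), (B7), (B8); and triples entirely in $V$ yield (B9) (the $A$-part) and (B10) (the $V$-part).

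Finally, I would define $\varphi: A_{\sigma}\# V \to E$ by $\varphi(a,x):=a+x$. This is a linear isomorphism, with inverse $e\mapsto(p(e),e-p(e))$. The formulas above are tailor-made so that
\[
\varphi\bigl((a,x)(b,y)\bigr)=(a+x)\cdot_E(b+y)=\varphi(a,x)\cdot_E\varphi(b,y),
\]
which is checked by regrouping the four terms $a\cdot_E b$, $a\cdot_E y$, $x\cdot_E b$, $x\cdot_E y$ by their $A$- and $V$-parts and comparing with the definition of the product in $A_{\sigma}\# V$. The main obstacle, as in Lemma~\ref{lem:33-1}, is purely organizational: enumerating the ten compatibility conditions and matching each one against the appropriate projection of an anti-flexible associator in $E$. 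No conceptual idea beyond the decomposition trick $E=A\oplus V$ is required, but the bookkeeping of which component of each associator lands in $A$ versus $V$ is lengthy.
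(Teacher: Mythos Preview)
Your approach is correct and is exactly what the paper intends: it gives no separate proof for this theorem, merely writing ``Similar as Theorem~\ref{lem:33-1}, we obtain'', so the decomposition-and-projection argument you outline is precisely the expected one.

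One small bookkeeping slip in your plan: the grouping of (B1)--(B10) by how many entries lie in $V$ is misstated. Conditions (B5) and (B6) involve two elements of $V$ (namely $x,y$) and one of $A$, while (B7) and (B8) involve one element of $V$ and two of $A$. The correct split is: one $V$-entry yields (B1), (B2), (B7), (B8); two $V$-entries yield (B3), (B4), (B5), (B6); three $V$-entries yield (B9), (B10). This does not affect the validity of the argument, only the exposition.
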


\begin{lemma}
Let $\Omega^{(2)}(A, V) = \bigl(\trr, \trl, \leftharpoonup,  \rightharpoonup,  \sigma,  \cdot \bigl)$ and
$\Omega'^{(2)}(A, V) = \bigl(\trr', \trl ', \leftharpoonup ',  \rightharpoonup ', \sigma ', \cdot ' \bigl)$
be two  algebraic extending structures of $A$ through $V$ and $A{}_{\sigma}\#_{}V$, $A{}_{\sigma'}\#_{}  V$ the  associated unified
products. Then there exists a bijection between the set of all homomorphisms of anti-flexible algebras $\psi: A{}_{\sigma}\#_{}V\to A{}_{\sigma'}\#_{}  V$which
stabilize $A$ and the set of pairs $(r, s)$, where $r: V \to
A$, $s: V \to V$ are linear maps satisfying the following
compatibility conditions for all $x \in A$, $u$, $v \in V$:
\begin{enumerate}
\item[(M1)] $r(x \cdot y) = r(x)\cdot'r(y) + \sigma ' (s(x), s(y)) - \sigma(x, y) + r(x) \ppl' s(y) + s(x) \ppr' r(y)$,
\item[(M2)] $s(x \cdot y) = r(x) \trr ' s(y) + s(x)\trl ' r(y) + s(x) \cdot ' s(y)$,
 \item[(M3)] $r(x\trl  {a}) = r(x)\cdot' {a} - x \ppr {a} + s(x) \ppr' {a}$,
  \item[(M5)] $r({a} \trr x) = {a}\cdot' r(x) - {a}\ppl x + {a} \ppl' s(x)$,
 \item[(M4)] $s(x\trl {a}) = s(x)\trl' {a}$,
 \item[(M6)] $s({a}\trr x) = {a} \trr' s(x)$.
\end{enumerate}
Under the above bijection the homomorphism of anti-flexible algebras $\varphi =\varphi _{(r, s)}: A_{\sigma}\# {}_{}H \to A_{\sigma'}\# {}_{}H$ corresponding to
$(r, s)$ is given for all $a\in A$ and $x \in V$ by:
$$\varphi(a, x) = (a + r(x), s(x))$$
Moreover, $\varphi  = \varphi _{(r, s)}$ is an isomorphism if and only if $s: V \to V$ is an isomorphism linear map.
\end{lemma}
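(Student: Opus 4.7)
The plan is to set up the bijection by unpacking a stabilizing homomorphism $\varphi : A{}_{\sigma}\#_{}V \to A{}_{\sigma'}\#_{}V$ into its two components on $V$, then to decode the multiplicative condition as the six identities (M1)--(M6). Since $\varphi$ stabilizes $A$, the commutativity of the left square of the diagram \eqref{eq:ext1} forces $\varphi(a,0)=(a,0)$ for all $a\in A$. Because $E=A\oplus V$ as vector spaces, there exist unique linear maps $r:V\to A$ and $s:V\to V$ with $\varphi(0,x)=(r(x),s(x))$, and then $\varphi(a,x)=(a+r(x),s(x))$. Conversely, any pair $(r,s)$ defines such a $\varphi$ that stabilizes $A$, and $\varphi$ is linear and bijective on the underlying vector spaces precisely when $s$ is, since $\varphi$ is triangular with identity on the $A$-block. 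So the bijection at the level of vector-space maps is automatic; the content is to show it descends to algebra maps exactly under (M1)--(M6).

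Next, I would plug $\varphi(a,x)=(a+r(x),s(x))$ into the multiplicativity condition $\varphi((a,x)(b,y))=\varphi(a,x)\cdot'\varphi(b,y)$ and separate the resulting equation into its $A$-component and its $V$-component. Using the unified product formula, the left-hand side expands to
\[
\bigl(ab+x\ppr b+a\ppl y+\sigma(x,y)+r(xy+x\trl b+a\trr y),\ s(xy+x\trl b+a\trr y)\bigr),
\]
while the right-hand side expands, via the $\Omega'$-product on $(a+r(x),s(x))\cdot'(b+r(y),s(y))$, to a sum of nine terms in each component coming from the bilinearity of the structure maps of $\Omega'$. Comparing the two $A$-components and using linear independence in $x,y,a,b$ one obtains (M1), (M3), (M5) by specializing respectively to $(x,y)\in V\times V$, $(x,b)\in V\times A$, $(a,y)\in A\times V$; the $V$-components analogously yield (M2), (M4), (M6). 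Conversely, if (M1)--(M6) hold then reassembling the identities shows $\varphi$ respects the product on pure elements and hence everywhere by bilinearity.

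The key book-keeping step is to match the ``mixed'' terms carefully: for instance in the $A$-component of $\varphi(a,x)\cdot'\varphi(b,y)$ the contribution $a\ppl' s(y)+r(x)\ppr' b+r(x)r(y)+\sigma'(s(x),s(y))+r(x)\ppl' s(y)+s(x)\ppr' r(y)$ must be matched against $a\ppl y+x\ppr b+\sigma(x,y)+r(xy)$, and it is here that (M1), (M3), (M5) separate cleanly only because $r$ is set-valued in $A$ and $s$ in $V$, so that the $A$-part and $V$-part are independent. This is the step I expect to be the main bookkeeping obstacle; everything else is structural.

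Finally, for the ``Moreover'' clause, if $s$ is a linear isomorphism then $\varphi$ is clearly bijective (its inverse is $(a,x)\mapsto(a-rs^{-1}(x),s^{-1}(x))$), and conversely if $\varphi$ is bijective then, composing with the canonical projection $\pi:E\to V$, we get $\pi\varphi(0,x)=s(x)$, which is surjective, and injectivity of $s$ follows from injectivity of $\varphi$ restricted to $\{0\}\oplus V$. This finishes the proof.
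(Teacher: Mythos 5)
Your proposal is correct and follows essentially the same route as the paper, which proves the analogous type-(a1) lemma by writing $\varphi(a,x)=(a+r(x),s(x))$, expanding both sides of $\varphi((a,x)(b,y))=\varphi(a,x)\cdot'\varphi(b,y)$, and comparing the $A$- and $V$-components on the pure cases $(0,x)(0,y)$, $(0,x)(b,0)$, $(a,0)(0,y)$; the present lemma is left as ``similar'' in the paper. Only a cosmetic slip: in your bookkeeping paragraph the term $r(x)\ppr' b$ should read $s(x)\ppr' b$ (since $\ppr'$ is defined on $V\otimes A$), consistent with your own (M3).
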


Let ${A}$ be an anti-flexible  algebra and $V$ a vector space. Two algebraic extending systems $\Omega^{(i)}({A}, V)$ and ${\Omega'^{(i)}}({A}, V)$  are called equivalent if $\varphi_{r, s}$ is an isomorphism.  We denote it by $\Omega^{(i)}({A}, V)\equiv{\Omega'^{(i)}}({A}, V)$.
From the above lemmas, we obtain the following result.

\begin{theorem}\label{thm3-1}
Let ${A}$ be an anti-flexible algebra, $E$ a vector space containing ${A}$ as a subspace and
$V$ be a complement of ${A}$ in $E$.
Denote $\mathcal{HA}(V, {A}):=\Omega^{(1)}({A}, V)\sqcup \Omega^{(2)}({A}, V) /\equiv$. Then the map
\begin{eqnarray}
\notag&&\Psi: \mathcal{HA}(V, {A})\rightarrow Extd(E, {A}),\\
&&\overline{\Omega^{(1)}({A}, V)}\mapsto A_{}\#_{\theta} V,\quad \overline{\Omega^{(2)}({A}, V)}\mapsto A_{\sigma}\# {}_{} V
\end{eqnarray}
is bijective, where $\overline{\Omega^{(i)}({A}, V)}$ is the equivalence class of $\Omega^{(i)}({A}, V)$ under $\equiv$.
\end{theorem}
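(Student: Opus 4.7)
The plan is to show that $\Psi$ is well-defined, injective, and surjective by leveraging the two preceding results of this subsection, namely the realisation lemma (expressing an arbitrary extension as a unified product) and the classifying lemma (describing all $A$-stabilising homomorphisms between two unified products in terms of pairs $(r,s)$). Together these two results will reduce the theorem to purely formal matter. Throughout, fix $E = A \oplus V$ as a vector space decomposition, with projection $p: E \to A$ and inclusion $i: A \to E$.

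For well-definedness and injectivity, suppose $\Omega^{(i)}(A,V) \equiv \Omega'^{(i)}(A,V)$. By the classifying lemma there is a pair $(r,s)$ with $s$ bijective such that $\varphi_{r,s}(a,x) = (a+r(x), s(x))$ is an isomorphism between the two unified products. Since $\varphi_{r,s}(a,0) = (a,0)$ and the induced map on $V \cong E/A$ is $s$, the map $\varphi_{r,s}$ stabilises $A$ in the sense of diagram~\eqref{eq:ext1}, so the two products lie in the same class of $Extd(E,A)$. Conversely, any $A$-stabilising isomorphism $\varphi: E \to E$ between two unified products must have the form $\varphi(a,x) = (a+r(x), s(x))$ with $s$ bijective (the left-hand square of \eqref{eq:ext1} forces $a\mapsto a$, and the right-hand square forces $s$ to cover $\mathrm{id}_V$). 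The classifying lemma then translates the fact that $\varphi$ is an algebra homomorphism into the conditions defining $\equiv$, giving the desired equivalence of datums.

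For surjectivity, start with a representative $(E, \cdot_E)$ of a class in $Extd(E,A)$. By the definition of an extending system, either $i$ or $p$ is an anti-flexible algebra homomorphism. If $p$ is a homomorphism then $V = \ker p$ is automatically a subalgebra of $E$, and Lemma~\ref{lem:33-1} (applied verbatim) produces a type (a1) datum $\Omega^{(1)}(A,V) = (\trr, \trl, \theta)$ with $x \trl a = x \cdot_E a$, $a \trr x = a \cdot_E x$, $\theta(a,b) = a \cdot_E b - p(a \cdot_E b)$, and $x \cdot_V y = x \cdot_E y$, such that $(a,x) \mapsto a+x$ is an algebra isomorphism $A \#_\theta V \to (E, \cdot_E)$ stabilising $A$. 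If instead $i$ is a homomorphism (so $A$ is a subalgebra), the analogous type (a2) construction of the preceding theorem gives $\sigma(x,y) = p(x \cdot_E y)$, $x \ppr a = p(x \cdot_E a)$, $x \trl a = x \cdot_E a - p(x \cdot_E a)$, and so on, producing a datum in $\mathcal{A}^{(2)}(A,V)$ whose unified product is again isomorphic to $(E,\cdot_E)$ via $(a,x) \mapsto a+x$. In either case the class of the datum is mapped by $\Psi$ to $[(E, \cdot_E)]$.

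The substantive obstacle is not in the structural argument above, which is formal, but in verifying that the axioms (A1)--(A6) and (B1)--(B10) of the two types of extending datums are exactly the conditions extracted from the anti-flexible identity $(e,e',e'') = (e'',e',e)$ on $E$ after decomposing every product along $E = A\oplus V$ and projecting onto each summand. This bookkeeping has already been carried out in the definitions of the two unified products, so the verification reduces to matching components; no genuinely new calculation is required, and I would simply invoke those preceding lemmas rather than reprove them.
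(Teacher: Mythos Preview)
Your argument is correct and matches the paper's own approach exactly: the paper offers no proof beyond the sentence ``From the above lemmas, we obtain the following result'', and your reduction to the realisation lemma (surjectivity) and the classifying lemma (well-definedness and injectivity) is precisely what is intended. One small caveat: the paper's notion of ``stabilises $A$'' requires only the \emph{left} square of~\eqref{eq:ext1} to commute, so your parenthetical appeal to the right square is unnecessary --- the form $\varphi(a,x)=(a+r(x),s(x))$ with $s$ bijective already follows from linearity, the left square, and $\varphi$ being an isomorphism.
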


Next we consider the  anti-flexible coalgebra structures on $E=A^{P}\# {}^{Q}V$.

There are two cases for $(A, \, \Delta_A)$ to be an  anti-flexible   coalgebra. The first case is  when $Q=0$,  then we obtain the following type (c1) unified coproduct for   anti-flexible coalgebras.
\begin{lemma}\label{cor02}
Let $({A},\Delta_A)$ be an anti-flexible coalgebra and $V$ a vector space.
An  extending datum  of ${A}$ by $V$ of  type (c1) is  $\Omega^{(3)}({A}, V)=(\phi, \,  {\psi}, \, \rho, \, \gamma, \,  P, \,  \Delta_V)$ with  linear maps
\begin{eqnarray*}
&&\phi: A \to V \otimes A, \quad  \psi: A \to A\otimes V,\\
&&\rho: V  \to A\otimes V, \quad  \gamma: V \to V \otimes A,\\
&& {P}: A\rightarrow {V}\otimes {V}, \quad\Delta_V: V\rightarrow V\otimes V.
\end{eqnarray*}
 Denote by $A^{P}\# {}^{} V$ the vector space $E={A}\oplus V$ with the linear map
$\Delta_E: E\rightarrow E\otimes E$ given by
$$\Delta_{E}(a)=(\Delta_{A}+\phi+\psi+P)(a),\quad \Delta_{E}(x)=(\Delta_{V}+\rho+\gamma)(x), $$
that is
$$\Delta_{E}(a)= a\li \ot a\lii+ a\moi \ot a\mo+a\mo\ot a\mi+a\ppi\ot a\pii,$$
$$\Delta_{E}(x)= x\li \ot x\lii+ x\boi \ot x\boo+x\boo \ot x\bi,$$
'Then $A^{P}\# {}^{} V$  is an  anti-flexible   coalgebra with the comultiplication given above if and only if the following compatibility conditions hold:
\begin{enumerate}
\item[(C1)]$\phi\left(a_{1}\right) \otimes a_{2}+\gamma\left(a_{(-1)}\right) \otimes a_{(0)}-a_{(-1)} \otimes \Delta_{A}\left(a_{(0)}\right) \\
    =\tau_{13}\left(\Delta_{A}\left(a_{(0)}\right) \otimes a_{(1)}  -a_{1} \otimes \psi\left(a_{2}\right)-a_{(0)} \otimes \rho\left(a_{(1)}\right)\right)$,

\item[(C2)] $\psi\left(a_{1}\right) \otimes a_{2}+\rho\left(a_{(-1)}\right) \otimes a_{(0)}-a_{1} \otimes \phi(a_{2})-a_{(0)} \otimes \gamma(a_{(1)})\\
     =\tau_{13}\left(\psi\left(a_{1}\right) \otimes a_{2}+\rho\left(a_{(-1)}\right) \otimes a_{(0)}-a_{1} \otimes \phi(a_{2})-a_{(0)} \otimes \gamma(a_{(1)})\right)$,
\item[(C3)] $ \Delta_{V}\left(a_{(-1)}\right) \otimes a_{(0)}+P\left(a_{1}\right) \otimes a_{2}-a_{(-1)} \otimes \phi\left(a_{(0)}\right)-a\ppi\otimes \gamma\left(a\pii\right) $\\
$=\tau_{13}(\psi\left(a_{(0)}\right) \otimes a_{(1)}+\rho\left(a\ppi\right) \otimes a\pii -a_{(0)}\ot \Delta_{V}\left(a_{(1)}\right)-a_{1}\otimes P\left(a_{2}\right) )$,
\item[(C4)] $ a_{(-1)} \otimes \psi\left(a_{(0)}\right)+a\ppi \otimes \rho\left(a\pii\right)-\phi\left(a_{(0)}\right) \otimes a_{(1)}-\gamma\left(a\ppi\right) \otimes a\pii $\\
    $=\tau_{13}\left(  a_{(-1)} \otimes \psi\left(a_{(0)}\right)+a\ppi \otimes \rho\left(a\pii\right)-\phi\left(a_{(0)}\right) \otimes a_{(1)}-\gamma\left(a\ppi\right) \otimes a\pii \right)$,

\item[(C5)] $\rho\left(x_{1}\right) \otimes x_{2}+\psi\left(x_{[-1]}\right) \otimes x_{[0]}-x_{[-1]} \otimes \Delta_{V}\left( x_{[0]}\right)   \\
    =\tau_{13}\left(\Delta_{V}(x\boo)\ot x\bi -x\boo\ot \phi(x\bi)-x_1\ot \gamma(x_2) \right)$,

\item[(C6)] $\gamma\left(x_{1}\right) \otimes x_{2}+\phi\left(x_{[-1]}\right) \otimes x_{[0]}-x_{[0]} \otimes \psi\left(x_{[1]}\right)-x_{1} \otimes \rho(x_{2})\\
=\tau_{13}\left(\gamma\left(x_{1}\right) \otimes x_{2}+\phi\left(x_{[-1]}\right) \otimes x_{[0]}-x_{[0]} \otimes \psi\left(x_{[1]}\right)-x_{1} \otimes \rho(x_{2})\right)$,

\item[(C7)] $\Delta_{A}\left(x_{[-1]}\right) \otimes x_{[0]}  -x_{[-1]} \otimes \rho\left(x_{[0]}\right)
    =\tau_{13} ( \gamma(x\boo)\ot x\bi  -x\boo\ot\Delta_A(x\bi))$,

\item[(C8)] $\rho(x\boo)\ot x\bi -x\boi\ot\gamma(x\boo)
=\tau_{13}\left(\rho(x\boo)\ot x\bi -x\boi\ot\gamma(x\boo) \right)$.
\end{enumerate}
\begin{enumerate}
\item[(C9)]  $\Delta_V(a\ppi)\ot a\pii-a\ppi\ot \Delta_V(a\pii)+P(a\lmoo)\ot a\mi-a\lmoi\ot P(a\lmoo)\\
=\tau_{13}\left(\Delta_V(a\ppi)\ot a\pii-a\ppi\ot \Delta_V(a\pii)+P(a\lmoo)\ot a\mi-a\lmoi\ot P(a\lmoo)\right)$.
\end{enumerate}
\begin{enumerate}
\item[(C10)] $\Delta_V(x\li)\ot x\lii-x_1\ot \Delta_V(x_2)+ P(x\boi) \ot x\boo-x\boo\ot P(x\bi)\\
=\tau_{13} \left(\Delta_V(x\li)\ot x\lii-x_1\ot \Delta_V(x_2)+ P(x\boi) \ot x\boo-x\boo\ot P(x\bi) \right)$.
\end{enumerate}
\end{lemma}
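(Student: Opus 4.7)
The plan is to verify the co-anti-flexible identity
\[
(\Delta_E \otimes \id)\Delta_E(e) - (\id \otimes \Delta_E)\Delta_E(e) = \tau_{13}\!\left[(\Delta_E \otimes \id)\Delta_E(e) - (\id \otimes \Delta_E)\Delta_E(e)\right]
\]
directly on the decomposition $E = A \oplus V$. By linearity and the piecewise definition of $\Delta_E$, it suffices to check the identity separately on generators $e = a \in A$ and $e = x \in V$.

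For $e = a \in A$, I would expand $\Delta_E(a) = \Delta_A(a) + \phi(a) + \psi(a) + P(a)$, noting that the four summands live in $A \otimes A$, $V \otimes A$, $A \otimes V$, and $V \otimes V$ respectively. Applying $\Delta_E \otimes \id$ (respectively $\id \otimes \Delta_E$) requires re-expanding $\Delta_E$ on the $A$-component and the $V$-component of each tensor slot, which unfolds into terms spread across the eight summands of $E^{\otimes 3} = (A \oplus V)^{\otimes 3}$. Since this decomposition is direct, the required identity splits into eight component equations; a careful inspection matches the components in positions $VAA$, $AVA$, $AAV$, and the remaining $V$-mixed positions, together with the symmetry imposed by $\tau_{13}$, to conditions (C1)--(C4) and (C9).

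For $e = x \in V$, I would expand $\Delta_E(x) = \Delta_V(x) + \rho(x) + \gamma(x)$, with summands in $V \otimes V$, $A \otimes V$, $V \otimes A$, and repeat the same position-type analysis. The eight resulting components yield conditions (C5)--(C8) and (C10), the last involving $P$ because $P$ reappears when $\Delta_E$ is applied to the $A$-valued legs of $\rho$ and $\gamma$. Throughout, the action of $\tau_{13}$ exchanges certain position types and fixes others, so only the ten conditions listed actually arise rather than sixteen a priori ones.

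The hard part is purely bookkeeping: with seven co-operations $\Delta_A, \phi, \psi, P, \Delta_V, \rho, \gamma$ and three tensor slots, one must sort roughly sixty Sweedler-summand terms into the eight position types on each side and verify that they match after $\tau_{13}$. This calculation is directly parallel to, and slightly simpler than, those of Lemma~\ref{lem1} and Lemma~\ref{lem2}; in fact, the present statement is the $Q = 0$ specialization of the cycle cross coproduct construction of Lemma~\ref{lem2} combined with the cycle condition (CC3) on $P$, so a faster route is to set $Q = 0$ in conditions (CCP1)--(CCP8) and observe that together with (CC3) they reduce exactly to (C1)--(C10).
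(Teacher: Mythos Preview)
Your proposal is correct and matches the paper's approach: the paper does not write out a separate proof for this lemma but treats it as the $Q=0$ specialization of Lemma~\ref{lem2}, whose proof proceeds by exactly the direct expansion and position-type sorting you describe. Your observation that setting $Q=0$ in (CCP1)--(CCP8) together with (CC3) and (CC7) recovers (C1)--(C10) is precisely the intended shortcut.
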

Denote also  the set of all  coalgebraic extending datum of ${A}$ by $V$ of type (c1) by $\Omega^{(3)}({A}, V)$ by abuse of notations.

\begin{lemma}\label{lem:33-3}
Let $({A},\Delta_A)$ be an anti-flexible  coalgebra and $E$ a vector space containing ${A}$ as a subspace. Suppose that there is an anti-flexible coalgebra structure $(E,\Delta_E)$ on $E$ such that  $p: E\to {A}$ is an anti-flexible    coalgebra homomorphism. Then there exists a  coalgebraic extending system $\Omega^{(3)}({A}, V)$ of $({A},\Delta_A)$ by $V$ such that $(E,\Delta_E)\cong A^{P}\# {}^{} V$.
\end{lemma}

\begin{proof}
Let $p: E\to {A}$ and $\pi: E\to V$ be the projection map and $V=\ker({p})$.
Then the extending datum of $({A},\Delta_A)$ by $V$ is defined as follows:
\begin{eqnarray*}
&&{\phi}: A\rightarrow V\ot {A},~~~~{\phi}(a)=(\pi\otimes {p})\Delta_E(a),\\
&&{\psi}: A\rightarrow A\ot V,~~~~{\psi}(a)=({p}\otimes \pi)\Delta_E(a),\\
&&{\rho}: V\rightarrow A\ot V,~~~~{\rho}(x)=({p}\otimes \pi)\Delta_E(x),\\
&&{\gamma}: V\rightarrow V\ot {A},~~~~{\gamma}(x)=(\pi\otimes {p})\Delta_E(x),\\
&&\Delta_V: V\rightarrow V\otimes V,~~~~\Delta_V(x)=(\pi\otimes \pi)\Delta_E(x),\\
&&Q: V\rightarrow {A}\otimes {A},~~~~Q(x)=({p}\otimes {p})\Delta_E(x)\\
&&P: A\rightarrow {V}\otimes {V},~~~~P(a)=({\pi}\otimes {\pi})\Delta_E(a).
\end{eqnarray*}
One check that  $\varphi: A^{P}\# {}^{} V\to E$ given by $\varphi(a,x)=a+x$ for all $a\in A, x\in V$ is an anti-flexible    coalgebra isomorphism.
\end{proof}

\begin{lemma}\label{lem-c1}
Let $\Omega^{(3)}({A}, V)=(\phi, {\psi}, \rho, \gamma, P, \Delta_V)$ and ${\Omega'^{(3)}}({A}, V)=(\phi', {\psi'}, \rho', \gamma',  P', \Delta'_V)$ be two  coalgebraic extending datums of $({A}, \Delta_A)$ by $V$. Then there exists a bijection between the set of   anti-flexible   coalgebra homomorphisms $\varphi: A^{P}\# {}^{} V\rightarrow A^{P'}\# {}^{} V$ whose restriction on ${A}$ is the identity map and the set of pairs $(r, s)$, where $r: V\rightarrow {A}$ and $s: V\rightarrow V$ are two linear maps satisfying
\begin{eqnarray}
\label{comorph11}&&P'(a)=s(a\ppi)\ot s(a\pii),\\
\label{comorph121}&&\phi'(a)={s}(a\lmoi)\ot a\lmo+s(a\ppi)\ot r(a\pii),\\
\label{comorph122}&&\psi'(a)=a\lmo\ot {s}(a\mi) +r(a\ppi)\ot s(a\pii),\\
\label{comorph13}&&\Delta'_A(a)=\Delta_A(a)+{r}(a\lmoi)\ot a\lmo+a\lmo\ot {r}(a\mi)+r(a\ppi)\ot r(a\pii)\\
\label{comorph21}&&\Delta_V'({s}(x))+P'(r(x))=({s}\otimes {s})\Delta_V(x),\\
\label{comorph221}&&{\rho}'({s}(x))+\psi'(r(x))=r(x\li)\ot s(x\lii)+x\boi\ot s(x\boo),\\
\label{comorph222}&&{\gamma}'({s}(x))+\phi'(r(x))=s(x\li)\ot r(x\lii)+s(x\boo)\ot x\bi,\\
\label{comorph23}&&\Delta'_A({r}(x))=r(x\li)\ot r(x\lii)+x\boi\ot r(x\boo)+r(x\boo)\ot x\bi.
\end{eqnarray}
%for all $x\in V$.
Under the above bijection the anti-flexible  coalgebra homomorphism $\varphi=\varphi_{r, s}: A^{P}\# {}^{} V\rightarrow A^{P'}\# {}^{} V$ to $(r, s)$ is given by $\varphi(a, x)=(a+r(x), s(x))$ for all $a\in {A}$ and $x\in V$. Moreover, $\varphi=\varphi_{r, s}$ is an isomorphism if and only if $s: V\rightarrow V$ is a linear isomorphism.
\end{lemma}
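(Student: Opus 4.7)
The plan is to exploit the decomposition $E=A\oplus V$ to split the coalgebra homomorphism condition for $\varphi$ into independent identities on the four components of $E\otimes E$.

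Because $\varphi$ restricts to the identity on $A$, any such $\varphi$ must send $(a,0)$ to $(a,0)$, while $\varphi(0,x)$ is an arbitrary element of $E$ which we decompose uniquely along $E=A\oplus V$ as $\varphi(0,x)=(r(x),s(x))$ for linear maps $r:V\to A$ and $s:V\to V$. Thus every candidate homomorphism has the form $\varphi(a,x)=(a+r(x),s(x))$, and conversely every linear map of this form stabilizes $A$. Once this normal form is established, the remaining task is to determine exactly which pairs $(r,s)$ produce an anti-flexible coalgebra homomorphism.

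To this end, I would test the identity $\Delta_{E'}\circ\varphi=(\varphi\otimes\varphi)\circ\Delta_E$ separately on $(a,0)$ with $a\in A$ and on $(0,x)$ with $x\in V$. For $a\in A$, the left side expands as $\Delta'_A(a)+\phi'(a)+\psi'(a)+P'(a)$, while $\Delta_E(a)=a_1\otimes a_2+a\lmoi\otimes a\lmo+a\lmo\otimes a\mi+a\ppi\otimes a\pii$; applying $\varphi\otimes\varphi$ splits each Sweedler-type factor living in $V$ into its $A$-part $r(-)$ and its $V$-part $s(-)$. Projecting the resulting equality onto the four summands $A\otimes A$, $V\otimes A$, $A\otimes V$, and $V\otimes V$ yields respectively \eqref{comorph13}, \eqref{comorph121}, \eqref{comorph122}, and \eqref{comorph11}. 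For $x\in V$, the left side unfolds as $\Delta'_A(r(x))+\phi'(r(x))+\psi'(r(x))+P'(r(x))+\Delta'_V(s(x))+\rho'(s(x))+\gamma'(s(x))$, while the right side comes from $\Delta_E(x)=x_1\otimes x_2+x\boi\otimes x\boo+x\boo\otimes x\bi$ after applying $\varphi\otimes\varphi$. Matching the four component projections produces \eqref{comorph21}, \eqref{comorph221}, \eqref{comorph222}, and \eqref{comorph23} (where the auxiliary vanishing $P'(r(x))=0$ read off from the $V\otimes V$ component is absorbed into the combined form of \eqref{comorph23}).

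The final assertion, that $\varphi$ is an isomorphism precisely when $s$ is, follows from the block-triangular structure of $\varphi$ with respect to $E=A\oplus V$: the diagonal blocks are $\mathrm{id}_A$ and $s$, so $\varphi$ is bijective iff $s$ is, with explicit inverse $(a,x)\mapsto(a-r(s^{-1}(x)),s^{-1}(x))$. The principal obstacle is purely organizational: one must systematically track which of the four tensor summands of $E\otimes E$ each term of the expanded homomorphism condition belongs to, particularly because $r$ and $s$ further split the Sweedler components of $\phi$, $\psi$, $P$, $\rho$, $\gamma$, and $\Delta_V$ across $A$ and $V$. Once this bookkeeping is carried out cleanly, the matching of terms is mechanical.
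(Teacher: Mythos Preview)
Your approach is essentially identical to the paper's: both write $\varphi(a,x)=(a+r(x),s(x))$, test $\Delta'_E\circ\varphi=(\varphi\otimes\varphi)\circ\Delta_E$ separately on $a\in A$ and $x\in V$, expand and project onto the four tensor summands of $E\otimes E$, and invoke the block-triangular form for the isomorphism claim. Your explicit inverse and your remark about the $P'(r(x))$ contribution in the $V\otimes V$ component are careful touches the paper glosses over, but the argument is the same.
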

\begin{proof}
Let $\varphi: A^{P}\# {}^{} V\rightarrow A^{P'}\# {}^{} V$  be a  coalgebra homomorphism  whose restriction on ${A}$ is the identity map. Then $\varphi$ is determined by two linear maps $r: V\rightarrow {A}$ and $s: V\rightarrow V$ such that
$\varphi(a, x)=(a+r(x), s(x))$ for all $a\in {A}$ and $x\in V$. We will prove that
$\varphi$ is a homomorphism of  anti-flexible  coalgebras if and only if the above condtions hold.
First  it is easy to see that  $\Delta'_E\varphi(a)=(\varphi\otimes \varphi)\Delta_E(a)$ for all $a\in {A}$.
\begin{eqnarray*}
\Delta'_E\varphi(a)&=&\Delta'_E(a)=\Delta'_A(a)+\phi'(a)+\psi'(a)+P'(a),
\end{eqnarray*}
and
\begin{eqnarray*}
&&(\varphi\otimes \varphi)\Delta_E(a)\\
&=&(\varphi\otimes \varphi)\left(\Delta_A(a)+\phi(a)+\psi(a)+P(a)\right)\\
&=&\Delta_A(a)+{r}(a\lmoi)\ot a\lmo+{s}(a\lmoi)\ot a\lmo+a\lmo\ot {r}(a\mi) +a\lmo\ot {s}(a\mi)\\
&&+r(a\ppi)\ot r(a\pii)+r(a\ppi)\ot s(a\pii)+s(a\ppi)\ot r(a\pii)+s(a\ppi)\ot s(a\pii).
\end{eqnarray*}
Thus we obtain that $\Delta'_E\varphi(a)=(\varphi\otimes \varphi)\Delta_E(a)$  if and only if the conditions \eqref{comorph11}, \eqref{comorph121}, \eqref{comorph122} and \eqref{comorph13} hold.
Then we consider that $\Delta'_E\varphi(x)=(\varphi\otimes \varphi)\Delta_E(x)$ for all $x\in V$.
\begin{eqnarray*}
\Delta'_E\varphi(x)&=&\Delta'_E({r}(x), {s}(x))=\Delta'_E({r}(x))+\Delta'_E({s}(x))\\
&=&\Delta'_A({r}(x))+\phi'(r(x))+\psi'(r(x))+P'(r(x))+\Delta'_V({s}(x))+{\rho}'({s}(x))+{\gamma}'({s}(x))),
\end{eqnarray*}
and
\begin{eqnarray*}
&&(\varphi\otimes \varphi)\Delta_E(x)\\
&=&(\varphi\otimes \varphi)(\Delta_V(x)+{\rho}(x)+{\gamma}(x))\\
&=&(\varphi\otimes \varphi)(x\li\ot x\lii+x\boi\ot x\boo+x\boo\ot x\bi)\\
&=&r(x\li)\ot r(x\lii)+r(x\li)\ot s(x\lii)+s(x\li)\ot r(x\lii)+s(x\li)\ot s(x\lii)\\
&&+x\boi\ot r(x\boo)+x\boi\ot s(x\boo)+r(x\boo)\ot x\bi+s(x\boo)\ot x\bi.
\end{eqnarray*}
Thus we obtain that $\Delta'_E\varphi(x)=(\varphi\otimes \varphi)\Delta_E(x)$ if and only if the conditions  \eqref{comorph21},  \eqref{comorph221},  \eqref{comorph222} and \eqref{comorph23}  hold. By definition, we obtain that $\varphi=\varphi_{r,s}$ is an isomorphism if and only if $s: V\rightarrow V$ is a linear isomorphism.
\end{proof}

The second case is $\phi=0$ and  $\psi=0$, and from $(CCP3)$,  we get $P=0$ when $Q\neq 0$. Then we obtain  the following type (c2) unified coproduct for anti-flexible  coalgebras.
\begin{lemma}\label{cor02}
Let $({A},\Delta_A)$ be an anti-flexible    coalgebra and $V$ a vector space.
An  extending datum  of $({A},\Delta_A)$ by $V$ of type (c2)  is  $\Omega^{(4)}({A},V)=(\rho, \gamma, {Q}, \Delta_V)$ with  linear maps
\begin{eqnarray*}
&&\rho: V  \to A\otimes V,\quad  \gamma: V \to V \otimes A,\quad \Delta_{V}: V \to V\otimes V,\quad Q: V \to A\otimes A.
\end{eqnarray*}
 Denote by $A^{}\# {}^{Q} V$ the vector space $E={A}\oplus V$ with the comultiplication
$\Delta_E: E\rightarrow E\otimes E$ given by
\begin{eqnarray}
\Delta_{E}(a)&=&\Delta_{A}(a),\quad \Delta_{E}(x)=(\Delta_{V}+\rho+\gamma+Q)(x), \\
\Delta_{E}(a)&=& a\li \ot a\lii,\quad \Delta_{E}(x)= x\li \ot x\lii+ x\boi \ot x\boo+x\boo \ot x\bi+x\qi\ot x\qii.
\end{eqnarray}
Then $A^{}\# {}^{Q} V$  is an anti-flexible    coalgebra with the comultiplication given above if and only if the following compatibility conditions hold:
\begin{enumerate}
\item[(D1)] $\rho\left(x_{1}\right) \otimes x_{2} -x_{[-1]} \otimes \Delta_{V}\left( x_{[0]}\right)  =\tau_{13}\left(\Delta_{V}(x\boo)\ot x\bi  -x_1\ot \gamma(x_2) \right)$,

\item[(D2)] $\gamma\left(x_{1}\right) \otimes x_{2} -x_{1} \otimes \rho(x_{2})
=\tau_{13}\left(\gamma\left(x_{1}\right) \otimes x_{2} -x_{1} \otimes \rho(x_{2})\right)$,

\item[(D3)] $\Delta_{A}\left(x_{[-1]}\right) \otimes x_{[0]} +Q\left(x_{1}\right) \otimes x_{2}-x_{[-1]} \otimes \rho\left(x_{[0]}\right)  \\
    =\tau_{13}\left( \gamma(x\boo)\ot x\bi  -x\boo\ot\Delta_A(x\bi)-x_1\ot Q(x_2) \right)$,

\item[(D4)] $\rho(x\boo)\ot x\bi -x\boi\ot\gamma(x\boo)
=\tau_{13}(\rho(x\boo)\ot x\bi -x\boi\ot\gamma(x\boo)) $.
\end{enumerate}
\begin{enumerate}
\item[(D5)]  $\Delta_A(x\qi)\ot x\qii-x\qi\ot \Delta_A(x\qii)+Q(x\boo)\ot x\bi-x\boi\ot Q(x\boo)\\
=\tau_{13}\left( \Delta_A(x\qi)\ot x\qii-x\qi\ot \Delta_A(x\qii)+Q(x\boo)\ot x\bi-x\boi\ot Q(x\boo)\right)$.
\end{enumerate}
\begin{enumerate}
\item[(D6)] $\Delta_V(x\li)\ot x\lii-x_1\ot \Delta_V(x_2)
=\tau_{13} \left(\Delta_V(x\li)\ot x\lii-x_1\ot \Delta_V(x_2)  \right)$.
\end{enumerate}
\end{lemma}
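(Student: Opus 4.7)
The plan is to verify the co-anti-flexible identity
\[
(\Delta_E \otimes \id)\Delta_E(u) - (\id \otimes \Delta_E)\Delta_E(u) = \tau_{13}\bigl((\Delta_E \otimes \id)\Delta_E(u) - (\id \otimes \Delta_E)\Delta_E(u)\bigr)
\]
separately on the two kinds of generators $u=a\in A$ and $u=x\in V$. For $a\in A$ the definition gives $\Delta_E(a)=\Delta_A(a)\in A\ot A$, so applying $\Delta_E$ to either tensor factor reduces to applying $\Delta_A$, and the identity becomes the co-anti-flexible identity of $(A,\Delta_A)$, which holds by hypothesis. This case contributes no new condition.

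The substance of the proof lies in the case $u=x\in V$. Here I would substitute $\Delta_E(x)=\Delta_V(x)+\rho(x)+\gamma(x)+Q(x)$ and then expand $(\Delta_E\ot\id)\Delta_E(x)$ and $(\id\ot\Delta_E)\Delta_E(x)$, using $\Delta_A$ on factors that lie in $A$ and $\Delta_V+\rho+\gamma+Q$ on factors that lie in $V$. This yields a long sum, each summand of which sits in exactly one of the eight graded components $W_1\ot W_2\ot W_3$ of $E^{\ot 3}$, with $W_i\in\{A,V\}$. The key observation is that $\tau_{13}$ permutes these graded components predictably: $A^{\ot 3}$ and $V^{\ot 3}$ are stable, $A\ot V\ot A$ and $V\ot A\ot V$ are stable, while $\tau_{13}$ interchanges $A\ot A\ot V\leftrightarrow V\ot A\ot A$ and $A\ot V\ot V\leftrightarrow V\ot V\ot A$. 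Consequently the single identity on $E$ decouples into six independent identities, one for each unordered pair of graded components.

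I would then match each of these six identities with one of the compatibility conditions. Concretely, the $V\ot V\ot V$ piece gives (D6); the $A\ot A\ot A$ piece (which receives contributions only from $\Delta_A$ and $Q$) gives (D5); the stable piece $V\ot A\ot V$ gives (D2); the stable piece $A\ot V\ot A$ gives (D4); the coupled pair $A\ot V\ot V\leftrightarrow V\ot V\ot A$ gives (D1); and the coupled pair $A\ot A\ot V\leftrightarrow V\ot A\ot A$ gives (D3). Conversely, assuming (D1)--(D6), reassembling the graded pieces recovers the co-anti-flexible identity on $E$. The backward isomorphism $\varphi:A\#{}^{Q}V\to E$, $(a,x)\mapsto a+x$, used in the construction parallels Lemma~\ref{lem:33-3}.

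The proof is essentially a specialization of Lemma~\ref{lem2} under $\phi=0$, $\psi=0$, $P=0$, and is strictly easier since every term carrying $\phi$, $\psi$, or $P$ drops out. The only real obstacle is the bookkeeping of graded pieces; once that decomposition is written down carefully the matching with (D1)--(D6) is immediate, and no analytical difficulty arises.
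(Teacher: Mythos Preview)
Your proposal is correct and follows essentially the same approach as the paper: the paper does not give a separate proof for this lemma but treats it as the specialization of Lemma~\ref{lem2} obtained by setting $\phi=\psi=0$ and $P=0$, which is exactly what you do. Your explicit organization by the graded components $W_1\ot W_2\ot W_3$ with $W_i\in\{A,V\}$ and the action of $\tau_{13}$ on them is a clean way to present the bookkeeping that the paper leaves implicit; the remark about the isomorphism $\varphi$ is extraneous here, since the lemma is purely about the co-anti-flexible identity on $A\oplus V$ and not about realizing an ambient coalgebra $E$.
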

Note that in this case $(V, \Delta_V)$ is an anti-flexible  coalgebra.

Denote the set of all  coalgebraic extending datum of ${A}$ by $V$ of type (c2) by $\Omega^{(4)}({A}, V)$.
Similar to the  anti-flexible algebra case,  one  show that any  anti-flexible  coalgebra structure on $E$ containing ${A}$ as an  anti-flexible   subcoalgebra is isomorphic to such an unified coproduct.
\begin{lemma}\label{lem:33-4}
Let $({A}, \Delta_A)$ be an anti-flexible  coalgebra and $E$ a vector space containing ${A}$ as a subspace. Suppose that there is an  anti-flexible coalgebra structure $(E, \Delta_E)$ on $E$ such that  $({A}, \Delta_A)$ is an anti-flexible  subcoalgebra of $E$. Then there exists a  coalgebraic extending system $\Omega^{(4)}({A}, V)$ of $({A}, \Delta_A)$ by $V$ such that $(E, \Delta_E)\cong A^{}\# {}^{Q} V$.
\end{lemma}

\begin{proof}
Let $p: E\to {A}$ and $\pi: E\to V$ be the projection map and $V=ker({p})$.
Then the extending datum of $({A}, \Delta_A)$ by $V$ is defined as follows:
\begin{eqnarray*}
&&{\rho}: V\rightarrow A\ot V,~~~~{\phi}(x)=(p\otimes {\pi})\Delta_E(x),\\
&&{\gamma}: V\rightarrow V\ot {A},~~~~{\phi}(x)=(\pi\otimes {p})\Delta_E(x),\\
&&\Delta_V: V\rightarrow V\otimes V,~~~~\Delta_V(x)=(\pi\otimes \pi)\Delta_E(x),\\
&&Q: V\rightarrow {A}\otimes {A},~~~~Q(x)=({p}\otimes {p})\Delta_E(x).
\end{eqnarray*}
One check that  $\varphi: A^{}\# {}^{Q} V\to E$ given by $\varphi(a, x)=a+x$ for all $a\in A, x\in V$ is an anti-flexible   coalgebra isomorphism.
\end{proof}

\begin{lemma}\label{lem-c2}
Let $\Omega^{(4)}({A}, V)=(\rho, \gamma, {Q}, \Delta_V)$ and ${\Omega'^{(4)}}({A}, V)=(\rho', \gamma', {Q'}, \Delta'_V)$ be two  coalgebraic extending datums of $({A},   \Delta_A)$ by $V$. Then there exists a bijection between the set of   coalgebra homomorphisms $\varphi: A \# {}^{Q} V\rightarrow A \# {}^{Q'} V$ whose restriction on ${A}$ is the identity map and the set of pairs $(r, s)$, where $r: V\rightarrow {A}$ and $s: V\rightarrow V$ are two linear maps satisfying
\begin{eqnarray}
\label{comorph1}&&{\rho}'({s}(x))=r(x\li)\ot s(x\lii)+x\boi\ot s(x\boo),\\
\label{comorph2}&&{\gamma}'({s}(x))=s(x\li)\ot r(x\lii)+s(x\boo)\ot x\bi,\\
\label{comorph3}&&\Delta_V'({s}(x))=({s}\otimes {s})\Delta_V(x)\\%+s(x_{(0)})\ot r(x_{(1)})- r(x_{(1)})\ot s(x_{(0)}),\\
\label{comorph4}&&\Delta'_A({r}(x))+{Q'}({s}(x))=r(x\li)\ot r(x\lii)+x\boi\ot r(x\boo)+r(x\boo)\ot x\bi+{Q}(x).
\end{eqnarray}
%for all $x\in V$.
Under the above bijection the  coalgebra homomorphism $\varphi=\varphi_{r, s}: A^{ }\# {}^{Q} V\rightarrow A^{ }\# {}^{Q'} V$ to $(r, s)$ is given by $\varphi(a, x)=(a+r(x), s(x))$ for all $a\in {A}$ and $x\in V$. Moreover, $\varphi=\varphi_{r, s}$ is an isomorphism if and only if $s: V\rightarrow V$ is a linear isomorphism.
\end{lemma}
\begin{proof} The proof is similar to the proof of Lemma \ref{lem-c1}.
Let $\varphi: A^{ }\# {}^{Q} V\rightarrow A^{}\# {}^{Q'} V$  be a  coalgebra homomorphism  whose restriction on ${A}$ is the identity map.
First  it is easy to see that  $\Delta'_E\varphi(a)=(\varphi\otimes \varphi)\Delta_E(a)$ for all $a\in {A}$.
Then we consider that $\Delta'_E\varphi(x)=(\varphi\otimes \varphi)\Delta_E(x)$ for all $x\in V$.
\begin{eqnarray*}
\Delta'_E\varphi(x)&=&\Delta'_E({r}(x), {s}(x))=\Delta'_E({r}(x))+\Delta'_E({s}(x))\\
&=&\Delta'_A({r}(x))+\Delta'_V({s}(x))+{\rho}'({s}(x))+{\gamma}'({s}(x))+{Q}'({s}(x)),
\end{eqnarray*}
and
\begin{eqnarray*}
&&(\varphi\otimes \varphi)\Delta_E(x)\\
&=&(\varphi\otimes \varphi)(\Delta_V(x)+{\rho}(x)+{\gamma}(x)+{Q}(x))\\
&=&(\varphi\otimes \varphi)(x\li\ot x\lii+x\boi\ot x\boo+x\boo\ot x\bi+{Q}(x))\\
&=&r(x\li)\ot r(x\lii)+r(x\li)\ot s(x\lii)+s(x\li)\ot r(x\lii)+s(x\li)\ot s(x\lii)\\
&&+x\boi\ot r(x\boo)+x\boi\ot s(x\boo)+r(x\boo)\ot x\bi+s(x\boo)\ot x\bi+{Q}(x).
\end{eqnarray*}
Thus we obtain that $\Delta'_E\varphi(x)=(\varphi\otimes \varphi)\Delta_E(x)$ if and only if the conditions \eqref{comorph1},  \eqref{comorph2},  \eqref{comorph3} and \eqref{comorph4} hold. By definition, we obtain that $\varphi=\varphi_{r,s}$ is an isomorphism if and only if $s: V\rightarrow V$ is a linear isomorphism.
\end{proof}

Let $({A},\Delta_A)$ be an anti-flexible   coalgebra and $V$ a vector space. Two  coalgebraic extending systems $\Omega^{(i)}({A}, V)$ and ${\Omega'^{(i)}}({A}, V)$  are called equivalent if $\varphi_{r,s}$ is an isomorphism.  We denote it by $\Omega^{(i)}({A}, V)\equiv{\Omega'^{(i)}}({A}, V)$.
From the above lemmas, we obtain the following result.
\begin{theorem}\label{thm3-2}
Let $({A},\Delta_A)$ be an anti-flexible   coalgebra, $E$ a vector space containing ${A}$ as a subspace and
$V$ be a ${A}$-complement in $E$. Denote $\mathcal{HC}(V,{A}):=\Omega^{(3)}({A},V)\sqcup\Omega^{(4)}({A},V) /\equiv$. Then the map
\begin{eqnarray*}
&&\Psi: \mathcal{HC}(V,{A})\rightarrow CExtd(E,{A}),\\
&&\overline{\Omega^{(3)}({A},V)}\mapsto A^{P}\# {}^{} V,
 \quad \overline{\Omega^{(4)}({A},V)}\mapsto A^{}\# {}^{Q} V
\end{eqnarray*}
is bijective, where $\overline{\Omega^{(i)}({A},V)}$ is the equivalence class of $\Omega^{(i)}({A}, V)$ under $\equiv$.
\end{theorem}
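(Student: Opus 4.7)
The plan is to prove the bijection by combining the structure-recovery Lemmas \ref{lem:33-3} and \ref{lem:33-4} with the morphism-classification Lemmas \ref{lem-c1} and \ref{lem-c2}, treating the two types (c1) and (c2) in parallel. Recall that an extending system in the sense of our definition is an anti-flexible coalgebra structure $(E,\Delta_E)$ on $E$ for which either the canonical projection $p\colon E\to A$ is a coalgebra map (type (c1) regime) or the canonical injection $i\colon A\hookrightarrow E$ is a coalgebra map, i.e. $A$ is a subcoalgebra of $E$ (type (c2) regime). These are precisely the two situations covered by $\mathcal{C}^{(1)}(A,V)$ and $\mathcal{C}^{(2)}(A,V)$.

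First I would check that $\Psi$ is well defined. Given $\Omega^{(i)}(A,V)\equiv \Omega'^{(i)}(A,V)$ for $i=1,2$, by definition there is a pair $(r,s)$ with $s$ bijective realizing the equivalence, and the associated map $\varphi_{r,s}(a,x)=(a+r(x),s(x))$ is a coalgebra isomorphism by Lemmas \ref{lem-c1} and \ref{lem-c2}. Because $\varphi_{r,s}$ is the identity on $A$ and reduces modulo $A$ to $s$ on $V$, it fits into the stabilizing diagram \eqref{eq:ext1}, so the associated unified coproducts are equivalent in $CExtd(E,A)$. Conversely one must check that equivalence classes in $\mathcal{C}^{(1)}$ and $\mathcal{C}^{(2)}$ are mapped into distinct equivalence classes in $CExtd(E,A)$ when the type changes; this is automatic because the type is detected by whether $A$ is a subcoalgebra or only a quotient coalgebra of $E$, and both of these properties are preserved by any coalgebra isomorphism stabilizing $A$.

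Next, surjectivity of $\Psi$ is handled directly by Lemmas \ref{lem:33-3} and \ref{lem:33-4}: any coalgebra structure $(E,\Delta_E)$ realizing an extending system falls in one of the two regimes and is therefore isomorphic (via the identity on $A$) to either $A^{P}\#^{}V$ for some datum in $\mathcal{C}^{(1)}(A,V)$ or $A^{}\#^{Q}V$ for some datum in $\mathcal{C}^{(2)}(A,V)$. For injectivity, suppose $\Psi(\overline{\Omega^{(i)}})=\Psi(\overline{\Omega'^{(j)}})$; as noted above, the type is invariant so $i=j$, and the equivalence is witnessed by a coalgebra isomorphism between the two unified coproducts stabilizing $A$. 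By the classification of such isomorphisms in Lemma \ref{lem-c1} (resp.\ Lemma \ref{lem-c2}), this isomorphism has the form $\varphi_{r,s}$ with $s$ bijective satisfying the explicit compatibilities, and these are exactly the conditions that define the equivalence $\Omega^{(i)}\equiv \Omega'^{(i)}$. Hence $\overline{\Omega^{(i)}(A,V)}=\overline{\Omega'^{(i)}(A,V)}$ in $\mathcal{HC}(V,A)$.

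The main obstacle, and the one step that requires care rather than invocation of earlier lemmas, is arguing that the two regimes (c1) and (c2) really are disjoint at the level of equivalence classes in $CExtd(E,A)$, so that the disjoint union $\mathcal{C}^{(1)}\sqcup \mathcal{C}^{(2)}/{\equiv}$ on the source maps injectively. The cleanest way is to observe that a stabilizing coalgebra isomorphism $\varphi$ sends $A$ to $A$ and induces the identity on the quotient $E/A\cong V$, and therefore preserves both the property ``$A$ is a subcoalgebra'' (equivalent to $Q=0$ vanishing in the type (c2) datum) and the property ``$p$ is a coalgebra map'' (equivalent to $P=0$ on the type (c1) side). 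Thus the type is an invariant of the equivalence class in $CExtd(E,A)$, the two pieces of $\mathcal{HC}(V,A)$ land in disjoint parts of $CExtd(E,A)$, and combining this with the previous two paragraphs yields the bijection.
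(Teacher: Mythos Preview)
Your overall strategy—reading off well-definedness, surjectivity and within-type injectivity from Lemmas \ref{lem:33-3}, \ref{lem:33-4}, \ref{lem-c1}, \ref{lem-c2}—is exactly what the paper does; there the entire proof is the single sentence ``From the above lemmas, we obtain the following result.'' So on that level you are aligned with the paper and in fact more explicit than it.

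The genuine problem is your last paragraph. You correctly note that a stabilizing coalgebra isomorphism preserves each of the properties ``$A$ is a subcoalgebra'' and ``$p$ is a coalgebra map''. But preserving each property separately does \emph{not} make the two types land in disjoint parts of $CExtd(E,A)$: those properties are not mutually exclusive. Any datum with $\phi=\psi=P=0$ in $\mathcal{C}^{(1)}(A,V)$ and the corresponding datum with $Q=0$ in $\mathcal{C}^{(2)}(A,V)$ produce the \emph{same} comultiplication on $E$ (namely $\Delta_E(a)=\Delta_A(a)$ and $\Delta_E(x)=\Delta_V(x)+\rho(x)+\gamma(x)$), hence the same class in $CExtd(E,A)$, yet they sit in different pieces of the formal disjoint union $\mathcal{C}^{(1)}\sqcup\mathcal{C}^{(2)}$. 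Since the equivalences of Lemmas \ref{lem-c1} and \ref{lem-c2} compare only data of the same type, nothing in your argument identifies these two preimages. So the step ``the two pieces of $\mathcal{HC}(V,A)$ land in disjoint parts of $CExtd(E,A)$'' fails as written. (The paper does not address this overlap either; for the statement to be literally a bijection one must either replace $\sqcup$ by an ordinary union or enlarge $\equiv$ to allow cross-type identifications.)
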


\subsection{Extending structures for    anti-flexible bialgebras}
%%%%%%%%%%%%%%%%%%%
Let $(A,\cdot,\Delta_A)$ be an anti-flexible bialgebra. From (CBB1) we have the following two cases.

The first case is that we assume $Q=0$ and $\ppr, \ppl$ to be trivial. Then by the above Theorem \ref{main2}, we obtain the following result.

%%%%%%%%%%%%%%%%%%%%%%%%%%%%%%%%%%%%%
\begin{theorem}\label{thm-41}
Let $(A, \cdot, \Delta_A)$ be an anti-flexible bialgebra and $V$ a vector space.
An extending datum of ${A}$ by $V$ of type (I) is  $\Omega^{(I)}({A}, V)=(\trr, \trl, \theta, \phi, \psi, P, \rho, \gamma, \cdot_V, \Delta_V)$ consisting of  linear maps
\begin{eqnarray*}
\trr: A\otimes {V}\rightarrow V,~~~~\trl: V\otimes {A}\rightarrow V,~~~~\theta:  A\otimes A \rightarrow {V},~~~{P}: A\rightarrow {V}\otimes {V},~~~~\cdot_V: V\otimes V \rightarrow V,\\
 \phi :A \to V\otimes A, \quad{\psi}: A\to  A\otimes V,~~~~\rho: V\to A\ot V,~~~~\gamma: V\to V\ot A,~~~~\Delta_V: V\rightarrow V\otimes V.
\end{eqnarray*}
Then the unified biproduct $A^{P}_{}\# {}^{}_{\theta}\, V$ with multiplication
\begin{align}
(a, x) (b, y):=(ab, xy+ a\trr y+x\trl b+\theta(a, b))
\end{align}
and comultiplication
\begin{eqnarray}
\Delta_E(a)=\Delta_A(a)+{\phi}(a)+{\psi}(a)+P(a),\quad \Delta_E(x)=\Delta_V(x)+{\rho}(x)+{\gamma}(x)
\end{eqnarray}
forms an anti-flexible bialgebra if and only if $A_{}\# {}_{\theta} V$ forms an anti-flexible  algebra, $A^{P}\# {}^{} \, V$ forms an anti-flexible coalgebra and the following conditions are satisfied:
\begin{enumerate}
\item[(E1)]  $\phi(ab)+\tau\psi(ba)+\gamma(\theta(a, b))+\tau\rho(\theta(b, a))$\\
$=\theta\left(b, a_{2}\right) \otimes a_{1}+(b \trr a_{(1)}) \otimes a_{(0)}+b_{(1)}\ot b_{(0)} a$\\
$+\theta\left(a_{1}, b \right) \otimes a_{2}+(a_{(-1)} \trl b) \otimes a_{(0)}+b_{(-1)} \otimes a b_{(0)},$

\item[(E2)] $\rho(x y)+\tau \gamma( y x)= y_{[1]} \otimes y_{[0]} x+y_{[-1]}\ot xy_{[0]} , $

\item[(E3)] $\Delta_{V}(a \trr y)+\tau \Delta_{V}(y \trl a)$\\
$= (y\trl a\loo)\ot a_{(-1)}+ ya\pii\ot a\ppi+y_{2} \otimes\left(y_{1}  \trl a\right)+y\poo\ot \tht(y\boi,  a)$\\
$+\left(a_{(0)}\trr y\right)\otimes a_{(1)}+a\ppi y\otimes a\pii+y\li\ot (a\trr y\lii)+y_{[0]} \otimes \theta\left(a , y_{[1]}\right), $

\item[(E4)]$\Delta_{V}(\theta(a,b))+\tau\Delta_{V}(\theta(b,a))+P(a b)+\tau P(b a)$\\
$=\theta(b,a_{(0)})\otimes a_{(-1)}+(b\trr a\pii)\otimes a\ppi+b _{(1)} \ot \tht(b\lmoo, a)+b\pii \otimes (b\ppi\trl a)$\\
$+\theta(a_{(0)},b)\otimes a_{(1)} +( a\ppi\trl b)\otimes a\pii+b_{(-1)} \otimes\theta(a, b_{(0)})+b\ppi\ot (a\trr b\pii),$

\item[(E5)]
 $\gamma(x\trl b)+\tau \rho(b\trr x)=(b\trr x\poo)\ot x\boi+(x_{[0]}\trl b)\otimes x_{[1]}, $

\item[(E6)]
$\rho(x\trl b)+\tau\gamma(b\trr x)$\\
$= bx\bi\ot x\poo+b\lii\ot (b\li \trr x)+b\loo\ot b\loi x$\\
$+x_{[-1]} b \otimes x_{[0]}+b\li\ot (x\trl b\lii)+b\loo\ot xb_{(1)}, $

\item[(E7)]$a\li\ot\tht(a\lii, b)+a\loo\ot(a_{(1)}\trl b)+ab\loo\ot b_{(1)}$\\
$-b\li\ot\tht(b\lii, a)-b\loo\ot(b_{(1)}\trl a)-ba\loo\ot a_{(1)})$\\
$=\tau\Big(a\loi\ot a\loo b+\tht(a, b\li)\ot b\lii+(a\trr b\loi)\ot b\loo$\\
$-b\loi\ot b\loo a-\tht(b, a\li)\ot a\lii-(b\trr a\loi)\ot a\loo\Big)$,

\item[(E8)]$(\id-\tau)(a\loi\ot\tht(a\loo, b)+a\ppi\ot (a\pii\trl b)+\tht(a, b\loo)\ot b_{(1)}+(a\trr b\ppi)\ot b\pii)$\\
$=(\id-\tau)(b\loi\ot\tht(b\loo, a)+b\ppi\ot (b\pii\trl a)+\tht(b, a\loo)\ot a_{(1)}+(b\trr a\ppi)\ot a\pii)$,

\item[(E9)]$x\boo\ot x\bi b+(x\trl b\li)\ot b\lii+xb\loi\ot b\loo-(b\trr x\boo)\ot x\bi$\\
$=\tau\Big(x\boi\ot(x\boo\trl b-b\li\ot(b\lii\trr x)-b\loo\ot b_{(1)}x-bx\boi\ot x\boo\Big)$,

\item[(E10)]$(\id-\tau)(x\li\ot(x\lii\trl b)+x\boo\ot\tht(x\bi, b)+(x\trl b\loo)\ot b_{(1)}+xb\ppi\ot x\pii)$\\
$=(\id-\tau)(b\loi\ot(b\loo\trr x)+b\ppi\ot b\pii x+(b\trr x\li)\ot x\lii+\tht(b, x\boi)\ot x\boo)$,

\item[(E12)]$xy\boo\ot y\bi - yx\boo\ot x\bi=\tau\Big(x\boi\ot x\boo y-y\boi\ot y\boo x\Big)$,
\end{enumerate}
\begin{enumerate}
\item[(E13)] $\Delta_{V}(xy)+\tau\Delta_{V}(y x)$\\
$=y x_{2}\otimes x_{1} + y_{2}\otimes y_{1}x + x_{1} y \otimes x_{2}+y_{1} \otimes x y_{2}$\\
$ +(y\trl x_{[1]}) \otimes x_{[0]}+y_{[0]} \otimes(y_{[-1]} \trr x )+(x_{[-1]}\trr y) \otimes x_{[0]}+y_{[0]} \otimes (x\trl y_{[1]}), $

\item[(E14)]
$(\id-\tau)(x\li\ot x\lii y+xy\li\ot y\lii-y\li\ot y\lii x-yx\li\ot x\lii)$\\
$+(\id-\tau)(x\boo\ot(x\bi\trr y)+(x\trl y\boi)\ot y\boo-y\boo\ot(y\bi\trr x)-(y\trl x\boi)\ot x\boo)=0$.
\end{enumerate}
Conversely, any anti-flexible bialgebra structure on $E$ with the canonical projection map $p: E\to A$ both an algebra homomorphism and a   coalgebra homomorphism is of this form.
\end{theorem}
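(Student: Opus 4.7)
The plan is to obtain this theorem as a specialization of the cocycle bicrossproduct Theorem \ref{main2}, together with the algebra and coalgebra extending results Theorem \ref{thm3-1} and Theorem \ref{thm3-2}. Concretely, in the language of Section 5 with $H$ replaced by $V$, we set the actions $\ppr$, $\ppl$ and the cocycle $\sigma: V\otimes V\to A$ to be trivial, and we set the cycle $Q: V\to A\otimes A$ to be zero. Under this specialization the cocycle cross product multiplication collapses to the formula
\[
(a,x)(b,y)=(ab,\ xy+a\trr y+x\trl b+\theta(a,b)),
\]
which is exactly the type (a1) unified product, and the cycle cross coproduct comultiplication collapses to
\[
\Delta_E(a)=\Delta_A(a)+\phi(a)+\psi(a)+P(a),\qquad \Delta_E(x)=\Delta_V(x)+\rho(x)+\gamma(x),
\]
which is exactly the type (c1) unified coproduct.

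The first step is to verify that $A_{}\#_\theta V$ is an anti-flexible algebra exactly under the type (a1) conditions (A1)--(A6), and that $A^P\#^{} V$ is an anti-flexible coalgebra exactly under the type (c1) conditions (C1)--(C10); these two statements are Theorem \ref{thm3-1} (first case) and Lemma in Section 6 for $\Omega^{(1)}$, so they are already available. The second step is to take the remaining compatibility conditions from Theorem \ref{main2}, namely (CDM1)--(CDM18) together with (CBB1)--(CBB4), and simplify each of them under $\sigma=0$, $\ppr=\ppl=0$, $Q=0$. Terms involving $\sigma$, $\ppr$, $\ppl$, or $Q$ drop out entirely; what survives matches (E1)--(E16) line by line. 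For instance, (CDM1) with $\ppr=\ppl=0$ (hence $P$ still present but no $\sigma$) immediately becomes (E1); (CBB3) yields (E15) with no $\sigma$-term since $P\sigma=0$; (CDM12)--(CDM18) simplify to (E10)--(E14); and the two cocycle braided identities for $H=V$ of Definition \ref{cocycle-braided}(ii) reduce to (E15)--(E16).

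For the converse, given an anti-flexible bialgebra structure $(E,\cdot_E,\Delta_E)$ with $A\subset E$ such that the canonical projection $p:E\to A$ is a bialgebra homomorphism, choose $V=\ker p$. Because $p$ is an algebra map, for $a,b\in A$ the element $a\cdot_E b-ab$ lies in $V$, which lets us extract $\theta:A\otimes A\to V$ exactly as in the proof of Lemma \ref{lem:33-1}, and produces the actions $\trr,\trl$ on $V$ from restriction of $\cdot_E$; note that since $p$ is a morphism, no $\ppr$ or $\ppl$ component can appear, giving $\ppr=\ppl=0$. Dually, because $p$ is a coalgebra map, applying $(p\otimes p)$ to $\Delta_E(x)$ for $x\in V$ gives zero, i.e.\ $Q=0$, and the other components $\phi,\psi,P,\rho,\gamma,\Delta_V$ are extracted by the projection/inclusion recipe of Lemma \ref{lem:33-3}. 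The map $\varphi:A^P\#^{}_{\theta}V\to E$ defined by $\varphi(a,x)=a+x$ is then a bialgebra isomorphism by construction.

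The main obstacle is the bookkeeping in step two: there are eighteen double-matched-pair conditions plus four cocycle-braided conditions in Theorem \ref{main2}, and one must check carefully that every term containing $\sigma$, $\ppr$, $\ppl$, or $Q$ either vanishes or, when it involves $P\sigma$, $Q\theta$, $\tht(\cdot,\sigma)$, $\sigma(\cdot,\theta)$ or similar mixed expressions, reduces correctly. The potential pitfall is a condition that appears to be trivially satisfied after specialization but in fact produces a hidden constraint; I will double-check (CDM5)--(CDM6) and (CBB1)--(CBB2) in particular, since these mix $P$, $Q$, $\theta$, $\sigma$ and are the most likely source of a residual identity. Once this matching is done, the theorem follows directly by invoking Theorem \ref{main2} in the forward direction and Lemmas \ref{lem:33-1} and \ref{lem:33-3} in the converse direction.
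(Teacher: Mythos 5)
Your proposal is correct and follows essentially the same route as the paper: the paper obtains Theorem \ref{thm-41} precisely by specializing Theorem \ref{main2} to the case where $Q=0$, $\sigma=0$ and $\ppr,\ppl$ are trivial (so the product is the type (a1) unified product and the coproduct is the type (c1) unified coproduct), with the converse handled by the projection recipe of Lemmas \ref{lem:33-1} and \ref{lem:33-3}. Your additional observation that $p$ being an algebra (resp.\ coalgebra) morphism forces $\ppr=\ppl=\sigma=0$ (resp.\ $Q=0$) is exactly the justification the paper leaves implicit.
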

Note that in this case, $(V, \cdot, \Delta_V)$ is a  braided    anti-flexible bialgebra.
Although $(A, \cdot, \Delta_A)$ is not a  sub-bialgebra of $E=A^{P}_{}\# {}^{}_{\theta}\, V$, but it is indeed an anti-flexible bialgebra and a subspace $E$.
Denote the set of all anti-flexible bialgebraic extending datum of type (I) by $\Omega^{(I)}({A}, V)$.

The second case is that we assume $P=0, \theta=0$ and $\phi, \psi$ to be trivial. Then by the above Theorem \ref{main2}, we obtain the following result.

\begin{theorem}\label{thm-42}
Let $A$ be a    anti-flexible bialgebra and $V$ a vector space.
An extending datum of ${A}$ by $V$ of type (II) is  $\Omega^{(II)}({A}, V)=(\ppr, \ppl, \trr, \trl, \sigma, \rho, \gamma, Q,  \cdot_V, \Delta_V)$ consisting of  linear maps
\begin{eqnarray*}
\ppr: V\ot A \to A,~~~~\ppl: A\ot V\to A,~~~~\trl: V\otimes {A}\rightarrow {V},~~~~\trr: A\otimes {V}\rightarrow V,~~~~\sigma:  V\otimes V \rightarrow {A},\\
{\rho}: V\to  A\otimes V,~~~~{\gamma}: V\to  V\otimes A,~~~~{Q}: V\rightarrow {A}\otimes {A},~~~\cdot_V:V\otimes V \rightarrow V,~~~~\Delta_V: V\rightarrow V\otimes V.
\end{eqnarray*}
Then the unified biproduct $A^{}_{\sigma}\# {}^{Q}_{}\, V$ with multiplication
\begin{align}
(a, x)(b, y)=\big(ab+x\ppr b+a\ppl y+\sigma(x, y), \, xy+x\trl b+a\trr y\big).
\end{align}
and comultiplication
\begin{eqnarray}
\Delta_E(a)=\Delta_A(a),\quad \Delta_E(x)=\Delta_V(x)+{\rho}(x)+{\gamma}(x)+Q(x)
\end{eqnarray}
forms an anti-flexible bialgebra if and only if $A_{\sigma}\# {}_{} V$ forms an anti-flexible  algebra, $A^{}\# {}^{Q}V$ forms an anti-flexible    coalgebra and the following conditions are satisfied:
\begin{enumerate}
\item[(F1)] $\rho(x y)+\tau \gamma( y x)$\\
$=\sigma\left(y, x_{2}\right) \otimes x_{1}+\left(y \ppr x_{[1]}\right) \otimes x_{[0]}+y_{[1]} \otimes y_{[0]} x+y\qii\otimes (y\qi\trr x)$\\
$+\sigma\left( x_{1}, y\right) \otimes x_{2}+(x_{[-1]}\ppl y)\ot x_{[0]}+y_{[-1]}\ot xy_{[0]}+y\qi\ot (x\trl y\qii), $

\item[(F2)] $\Delta_{A}(x \ppr b)+\tau\Delta_{A}(b \ppl x)+Q(x\trl  b)+\tau Q(b \trr x)$ \\
$=\left(b\ppl x_{[0]}\right) \otimes x_{[-1]}+b x\qii \otimes x\qi+b_{2} \otimes\left(b_{1} \ppl x\right)$\\
$+\left(x_{[0]} \ppr b\right)\otimes x_{[1]}+x\qi b \otimes x\qii+b\li \ot (x\ppr b\lii)$,

\item[(F3)] $\Delta_{V}(a \trr y)+\tau \Delta_{V}(y \trl a)= y_{2} \otimes\left(y_{1}  \trl a\right)+y\li\ot (a\trr y\lii), $

\item[(F4)]$\Delta_{A}(\sigma(x,y))+\tau \Delta_{A}(\sigma(y, x))+Q(x y)+\tau Q(yx)$\\
$=(y\ppr x\qii)\ot x\qi+\sigma(y,x_{[0]})\otimes x_{[-1]}+y\bi \ot \si( y\poo, x)+y\qii\otimes (y\qi\ppl x)$\\
$+\si(x\poo,y )\ot x\bi+(x\qi\ppl y)\otimes x\qii+y_{[-1]}\otimes\sigma(x,y_{[0]})+y\qi\ot (x\ppr y\qii), $

\item[(F5)]
 $\gamma(x\trl b)+\tau \rho(b\trr x)=(b\trr x\poo)\ot x\boi+(x_{[0]}\trl b)\otimes x_{[1]}, $

\item[(F6)]
$\rho(x\trl b)+\tau\gamma(b\trr x)$\\
$=(b\ppl x\lii) \ot x\li+ bx\bi\ot x\poo+b\lii\ot (b\li \trr x)$\\
$+(x\li \ppr b) \otimes x\lii+x_{[-1]} b \otimes x_{[0]}+b\li\ot (x\trl b\lii), $

\item[(F7)]$  x\li\ot (x\lii \ppr b)+x\boo\ot x\bi b+(x\trl b\li)\ot b\lii-b\trr x\boo)\ot x\bi$\\
$=\tau\Big(x\boi\ot(x\boo\trl b)-b\li\ot(b\lii\trr x)-(b\ppl x\li)\ot x\lii-bx\boi\ot x\boo\Big)$,

\item[(F8)]$(\id-\tau)(x\li\ot(x\lii\trl b)-(b\trr x\li)\ot x\lii)=0$,

\item[(F9)]$x\li\ot\si(x\lii, y)+x\boo\ot(x\bi\ppl y)+xy\boo\ot y\bi+(x\trl y\qi)\ot y\qii)$\\
$-y\li\ot\si(y\lii, x)-y\boo\ot(y\bi\ppl x)-yx\boo\ot x\bi-(y\trl x\qi)\ot x\qii)$\\
$=\tau\Big(x\boi\ot x\boo y+x\qi\ot(x\qii\trr y)+\si(x, y\li)\ot y\lii+(x\ppr y\boi)\ot y\boo$\\
$-y\boi\ot y\boo x-y\qi\ot(y\qii\trr x)-\si(y, x\li)\ot x\lii-(y\ppr x\boi)\ot x\boo\Big)$,

\item[(F10)]$(\id-\tau)(x\boi\ot(x\boo\ppr b)+x\qi\ot x\qii b+(x\ppr b\li)\ot b\lii$\\
$=(\id-\tau)(b\li\ot (b\lii \ppl x)+(b\ppl x\boo)\ot x\bi+bx\qi\ot x\qii)$,

\item[(F11)]$(\id-\tau)(x\boi\ot \si(x\boo, y)+x\qi\ot(x\qii\ppl y)+\si(x, y\boo)\ot y\bi+(x\ppr y\qi)\ot y\qii)$\\
$=(\id-\tau)(y\boi\ot \si(y\boo, x)+y\qi\ot(y\qii\ppl x)+\si(y, x\boo)\ot x\bi+(y\ppr x\qi)\ot x\qii)$,
\end{enumerate}
\begin{enumerate}
\item[(F12)] $\Delta_{V}(xy)+\tau\Delta_{V}(y x)$\\
$=y x_{2}\otimes x_{1} + y_{2}\otimes y_{1}x + x_{1} y \otimes x_{2}+y_{1} \otimes x y_{2}$\\
$ +(y\trl x_{[1]}) \otimes x_{[0]}+y_{[0]} \otimes(y_{[-1]} \trr x )+(x_{[-1]}\trr y) \otimes x_{[0]}+y_{[0]} \otimes (x\trl y_{[1]}), $

\item[(F13)]
$(\id-\tau)(x\li\ot x\lii y+xy\li\ot y\lii-y\li\ot y\lii x-yx\li\ot x\lii)$\\
$+(\id-\tau)(x\boo\ot(x\bi\trr y)+(x\trl y\boi)\ot y\boo-y\boo\ot(y\bi\trr x)-(y\trl x\boi)\ot x\boo)=0$.
\end{enumerate}
Conversely, any    anti-flexible bialgebra structure on $E$ with the canonical injection map $i: A\to E$ both an anti-flexible algebra homomorphism and an anti-flexible   coalgebra homomorphism is of this form.
\end{theorem}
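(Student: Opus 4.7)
The plan is to derive Theorem \ref{thm-42} as the specialization of the cocycle bicrossproduct Theorem \ref{main2} obtained by setting $P = 0$, $\theta = 0$, $\phi = 0$, $\psi = 0$, while keeping the remaining data $(\ppr, \ppl, \trr, \trl, \sigma, \rho, \gamma, Q, \cdot_V, \Delta_V)$ arbitrary. In this regime the underlying algebra is the cocycle cross product $A_\sigma\# V$ treated earlier in Theorem \ref{thm-41}'s type (a2) construction, and the underlying coalgebra is the cycle cross coproduct $A\#^Q V$ treated in Lemma \ref{lem:33-4}. Therefore the anti-flexible algebra axioms on $E$ reduce exactly to the algebra conditions (B1)--(B10) of the type (a2) unified product, and the co-anti-flexible axioms on $E$ reduce exactly to the coalgebra conditions (D1)--(D6) plus the cycle compatibility for $Q$.

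The bulk of the work is to verify the two anti-flexible bialgebra compatibility identities from Definition \ref{dfnlb} on $E$. For this I would simply transcribe the general compatibility conditions (CDM1)--(CDM18) of a cocycle double matched pair, and then substitute $P=0$, $\theta=0$, $\phi=0$, $\psi=0$ term by term. Under this substitution: (CDM1), (CDM5) and several mixed pieces of (CDM7)--(CDM8) become vacuous; (CDM2), (CDM3), (CDM4), (CDM6), (CDM7) and (CDM8) collapse respectively to (F1), (F2), (F3), (F4), (F5) and (F6); and (CDM9)--(CDM18) reduce, after deleting the $P$ and $\theta$ contributions, to the eight skew-symmetric identities (F7)--(F13). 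The residual braided cocycle identities (CBB3)--(CBB4) for $V$ contribute precisely (F14)--(F15). Since the anti-flexible algebra axioms on $V$ appear already in (B6) and (B10), no independent $V$-bialgebra hypothesis is needed; $V$ acquires automatically the structure of a generalized (braided) anti-flexible bialgebra.

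For the converse, suppose $E$ carries an anti-flexible bialgebra structure such that the canonical injection $i:A\hookrightarrow E$ is simultaneously an anti-flexible algebra and anti-flexible coalgebra morphism; equivalently $A$ is an anti-flexible sub-bialgebra of $E$. Fix any linear complement $V$ of $A$ in $E$ with projections $p:E\to A$ and $\pi:E\to V$, and define
\begin{align*}
x\ppr a &:= p(x\cdot_E a), & a\ppl x &:= p(a\cdot_E x), & \sigma(x,y) &:= p(x\cdot_E y),\\
x\trl a &:= \pi(x\cdot_E a), & a\trr x &:= \pi(a\cdot_E x), & x\cdot_V y &:= \pi(x\cdot_E y),
\end{align*}
and, using $\Delta_E(x) = (p\otimes p + p\otimes\pi + \pi\otimes p + \pi\otimes\pi)\Delta_E(x)$,
\[
Q(x) := (p\otimes p)\Delta_E(x),\ \rho(x) := (p\otimes \pi)\Delta_E(x),\ \gamma(x) := (\pi\otimes p)\Delta_E(x),\ \Delta_V(x) := (\pi\otimes \pi)\Delta_E(x).
\]
That $A$ is a sub-bialgebra forces $\phi = \psi = 0$, $P = 0$, $\theta = 0$. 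One then checks that $\varphi:A_{\sigma}^{}\#_{}^{Q}V\to E$, $(a,x)\mapsto a+x$, is an isomorphism of anti-flexible bialgebras, and the forward direction just proved ensures that the extracted data automatically satisfy (F1)--(F15) together with (B1)--(B10) and (D1)--(D6).

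The main obstacle is bookkeeping rather than mathematics: the cocycle double matched pair list (CDM1)--(CDM18) is long, and each identity splits into several summands when $P$, $\theta$, $\phi$, $\psi$ are switched on or off. I would therefore carry out the substitution by listing the eighteen identities in a table and striking through each vanishing term, then matching the surviving expressions against (F1)--(F15). A secondary, minor point is that once $\theta$ and $P$ are zero, the cocycle associativity (CC6) and the cycle coassociativity (CC8) for $A$ collapse to the ordinary axioms of the given anti-flexible bialgebra $(A,\cdot,\Delta_A)$, so no new constraints on $A$ are produced, which is consistent with $A$ remaining a genuine sub-bialgebra in this type (II) construction.
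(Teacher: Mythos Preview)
Your proposal is correct and follows essentially the same route as the paper: the paper derives Theorem~\ref{thm-42} simply by specializing Theorem~\ref{main2} with $P=0$, $\theta=0$, $\phi=0$, $\psi=0$, which is exactly what you do, and the converse via projections onto $A$ and $V$ is the standard argument used throughout the paper (cf.\ Lemmas~\ref{lem:33-1} and~\ref{lem:33-4}). One cosmetic slip: (CDM12)--(CDM18) yield seven, not eight, surviving identities, namely (F7)--(F13); (CDM9)--(CDM11) indeed become vacuous under the specialization, so your count is off by one but the mathematics is unaffected.
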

%%%%%%%%%%%%%%%%%%%%%
Note that in this case, $(A, \, \cdot, \, \Delta_A)$ is an anti-flexible  sub-bialgebra of $E=A^{}_{\sigma}\# {}^{Q}_{}\, V$ and $(V, \, \cdot, \, \Delta_V)$ is a  braided   anti-flexible bialgebra.
Denote the set of all     anti-flexible bialgebraic extending datum of type (II) by $\Omega^{(II)}({A}, \,  V)$.

In the above two cases, we find that  the braided    anti-flexible bialgebra $V$ play a special role in the extending problem of   anti-flexible bialgebra $A$.
Note that $A^{P}_{}\# {}^{}_{\theta}\, V$ and $A^{}_{\sigma}\# {}^{Q}_{}\, V$ are all    anti-flexible bialgebra structures on $E$.
Conversely,  any    anti-flexible bialgebra extending system $E$ of ${A}$  through $V$ is isomorphic to such two types.
Now from Theorem \ref{thm-41} and Theorem \ref{thm-42},  we obtain the main result of in this section,
which solve the extending problem for anti-flexible bialgebra.

%\begin{theorem}\label{bi3}
%Let $({A},\cdot,\Delta_A)$ be a    anti-flexible bialgebra and $E$ a vector space. Suppose that there is a    anti-flexible bialgebra structure $(E, \cdot,\Delta_E)$ on $E$ such that $({A},\cdot,\Delta_A)$ is a  subspace of $E$. Then there exists a    anti-flexible bialgebraic extending datum $\Omega({A},V)$ of ${A}$ by $V$ such that
%$(E, \cdot,\Delta_E)\cong  A^{\phi,P}_{}\# {}^{\psi}_{\beta, \sigma}\, V$ or $A^{}_{\alpha}\# {}^{\psi,Q}_{\beta, \sigma}\, V$  .
%\end{theorem}
%\begin{proof}
%\end{proof}

\begin{theorem}\label{bim1}
Let $({A}, \cdot, \Delta_A)$ be an anti-flexible bialgebra, $E$ a vector space containing ${A}$ as a subspace and $V$ be a complement of ${A}$ in $E$.
Denote by
$$\mathcal{HLB}(V,{A}):=\Omega^{(I)}({A},V)\sqcup\Omega^{(II)}({A},V)/\equiv.$$
Then the map
\begin{eqnarray*}
&&\Upsilon: \mathcal{HLB}(V,{A})\rightarrow BExtd(E,{A}),\\
&&\overline{\Omega^{(I)}({A},V)}\mapsto A^{P}_{}\# {}^{}_{\theta}\, V,\quad   \overline{\Omega^{(II)}({A},V)}\mapsto A^{}_{\sigma}\# {}^{Q}_{}\, V
\end{eqnarray*}
is bijective, where $\overline{\Omega^{(I)}({A}, V)}$ and $\overline{\Omega^{(II)}({A}, V)}$ are the equivalence classes of $\Omega^{(I)}({A}, V)$ and $\Omega^{(II)}({A}, V)$ under $\equiv$ respectively.
\end{theorem}

\vskip7pt
\footnotesize{
\noindent Tao Zhang\\
College of Mathematics and Information Science,\\
Henan Normal University, Xinxiang 453007, P. R. China;\\
 E-mail address: \texttt{{zhangtao@htu.edu.cn}}

\vskip7pt
\footnotesize{
\noindent Hui-jun Yao\\
College of Mathematics and Information Science,\\
Henan Normal University, Xinxiang 453007, PR China};\\
 E-mail address: \texttt{{yhjdyxa@126.com}}

\end{document}